\makeatletter \@addtoreset{equation}{section} \makeatother
\renewcommand\thefigure{\thesection.\@arabic\c@figure}
\renewcommand\thetable{\thesection.\@arabic\c@table}
\newtheorem{theorem}{Theorem}[section]
\newtheorem{lemma}[theorem]{Lemma}
\newtheorem{proposition}[theorem]{Proposition}
\newtheorem{corollary}[theorem]{Corollary}
\newcommand{\bb}[1]{{\mathbb #1}}
\newcommand{\mc}[1]{{\mathcal #1}}
\newcommand{\<}{\langle}
\renewcommand{\>}{\rangle}
\newcommand{\R}{\mathbb{R}}
\newcommand{\1}{\,\rlap{\small 1}\kern.13em 1}
\newcommand{\sqr}[2]{{\vcenter{\hrule height.#2pt%
                      \hbox{\vrule width.#2pt height#1pt\kern#1pt%
                            \vrule width.#2pt}%
                      \hrule height.#2pt}}}
\renewcommand{\limsup}{\mathop{\overline{\hbox{\rm lim}}}}
\renewcommand{\liminf}{\mathop{\underline{\hbox{\rm lim}}}}
\title[Boundary driven exclusion processes]{Static
  large deviations of boundary driven exclusion
  processes}
\author{J. Farfan}
\address{{\rm J. Farfan} \newline
IMPA, Estrada Dona Castorina 110,
CEP 22460 Rio de Janeiro, Brasil
\newline e-mail: \rm \texttt{jonathan@impa.br}}
\begin{document}

\noindent \keywords{Boundary driven exclusion processes, Stationary
  nonequilibrium states, large deviations} 

\subjclass[2000]{Primary 82C22; Secondary 60F10, 82C35}

\begin{abstract}
We prove that the stationary measure associated to a boundary driven exclusion process in any dimension satisfies a large deviation principle with rate function given by the quasi potential of the Freidlin and Wentzell theory.
\end{abstract}

\maketitle


\section{introduction}


In the last years statical and dynamical large deviations principles of boundary driven
interacting particles systems has attracted attention as a
first step in the understanding of nonequilibrium thermodynamics (cf.
\cite{BDGJL7, bd, Der} and references therein). One of the main dificulties is that in general the stationary measure is not known explicitly and moreover it can present long range correlations (cf. \cite{S}).

In particular, many results concerning large deviations principle for the stationary measure in the context of the one dimensional boundary driven SSEP has been established.

Derrida, Lebowitz and Speer \cite{DLS} proved that the large deviation functional of the stationary state may be  defined by a time independent variational formula. However, the use of exact computations to obtain this result raises many problems for the generalization to a broader class of models.

Inspired on the Freidlin and Wentzell's theory \cite{FW}, Bertini, et al \cite{BDGJL} proved that the large deviation functional obtained in \cite{DLS} coincides with the quasi potential of the dynamical rate function.

Finally, by following the Freidlin and Wentzell's strategy \cite{FW}, Bodineau and Giacomin \cite{BG} proved directly that the quasi potential of the dynamical rate function is the large deviation functional of the stationary state. This approach relies on the fact that there is a good dynamical large deviation principle \cite{DS} together with  some good properties of the dynamical rate function and the hydrodynamic equation. For this reason, it seems to be very promising for the generalization to a large class of interacting exclusion systems.

In this article our main goal is to prove that the quasi potential of the dynamical rate function is the large deviation functional of the stationary state in the context of boundary driven exclusion process in any dimension.

We follow \cite{FW} and more closely the approach given in \cite{BG}. In fact, the arguments presented in \cite{BG} can be adapted modulo technical dificulties to our context. However there is a case not considered in the proof of the upper bound in \cite{BG}, which we describe in detail in the following.

For a fixed closed set $\mc C$ in the weak topology not containing the stationary density $\bar\rho$, small neighborhoods $\mc V_{\delta}$ (which depends on a parameter $\delta>0$) of $\bar\rho$ are considered. By following the Freidlin and Wentzell strategy, the proof of the upper bound is reduced to prove that the minimal quasi potential of densities in $\mc C$ can be estimated from above by the minimal dynamical rate function of trajectories which start at $\mc V_{\delta}$ and touch $\mc C$ before a time $T=T_{\delta}$ (which also depends on $\delta$).

At this point, in \cite{BG}, it is supposed that the time $T = T_{\delta}$ is fixed and then, by a direct aplication of the dynamical large deviation upper bound, the desired result is obtained. The same argument still works if we assume the existence of a sequence of parameters $\delta_n\downarrow 0$ with the sequence of times $T_{\delta_n}$ bounded. The problem here, is that such sequence doesn't necessarily exist. Moreover, by the construction of such times $T_{\delta}$, it is expected that $T_{\delta}\to\infty$ as $\delta\downarrow 0$.

In our context, to solve this missing case, we first prove that long trajectories which have their dynamical rate functions uniformly bounded have to be close in some moment to the stationary density $\bar\rho$ in the $L^2$ metric, and then we prove that the quasi potential is continuous at the stationary density $\bar\rho$ in the $L^2$ topology.
The proof relies on having a good dynamical large deviation principle \cite{DS}, and on some properties of the dynamical rate function and the weak solutions of the hydrodynamic equation.

In this way, we fulfill the gap in \cite{BG} described above and extend its result for a broader class of models.

Another contribution of this work is a direct proof of the lower semicontinuity of the quasi potential. In the context of one dimensional boundary driven SSEP, the lower semicontinuity of the quasi potential was obtained indirectly by using its equivalent formulation (cf. \cite{BDGJL}) in terms of a time independent variational problem introduced in \cite{DLS}.


\section{Notation and Results}


\subsection{Boundary driven exclusion processes}

Fix an integer $d \geq 1$. For each integer $N\geq 1$, let $\Omega_N=\{-N+1,\dots,N-1\}
\times\{0,\dots,N-1\}^{d-1}$ be the microscopic space and let $X_N=\{0,1\}^{\Omega_N}$ be the
configuration space. The elements of $X_N$ are denoted by $\eta$ so that
$\eta(x)=1$, resp. $0$, if the site $x$ is occupied, resp. empty, for the configuration $\eta$.
For $x,y\in \Omega_N$, we denote by $\eta^{x,y}$, resp. by $\eta^x$, the configuration obtained
from $\eta$ by exchanging the occupations of sites $x$ and $y$, resp. by flipping the occupation
variable at site $x$:

\begin{eqnarray*}
\eta^{x,y}(z)= 
\left\{ 
\begin{array}{lll}
\eta (y) & \hbox{if}\ z= x \,, \\
\eta(x) & \hbox{if}\ z=y \,, \\
\eta(z) & \hbox{if}\ z\neq x,y \,.
\end{array}
     \right.
\qquad
\eta^x(z)= \left\{ \begin{array}{ll}
1-\eta(x) & \hbox{if}\ z = x \, ,\\
\eta (z) & \hbox{if}\ z\neq x\, .\end{array}
\right.
\end{eqnarray*}

Let $\Omega = (-1,1)\times\bb T^{d-1}$ be the macroscopic space, where $\bb T^k$ is the $k$-
dimensional torus $[0,1)^k$, and denote its boundary by $\Gamma = \{-1,1\}\times\bb T^{d-1}$.
Fix a function $b:\Gamma\to(0,1)$ of class $\mc C^2$.

The boundary driven symmetric exclussion process is the Markov process on $X_N$ with generator
$$
\mc L_N=\mc L_{N,0}+\mc L_{N,b}\, ,
$$
where $\mc L_{N,0}$ corresponds to the bulk dynamics and $\mc L_{N,b}$ to the boundary dynamics.

The action of the generator $\mc L_{N,0}$ on functions $f: X_N\to \bb R$ is given by
$$
(\mc L_{N,0}f)(\eta)=N^2\sum_{i=1}^{d}\sum_{x}r_{x,x+e_i}(\eta)
\big[f(\eta^{x,x+e_i})-f(\eta)\big],
$$
where $(e_1,\dots,e_d)$ is the canonical basis of ${\bb R}^d$ and where the second sum is
performed over all $x\in {\bb Z}^d$ such that $x,x+e_i\in \Omega_N$. Moreover, for some fixed $a>-\frac{1}{2}$, the rates $r_{x,x+e_i}(\eta)$ are given by 
$$
r_{x,x+e_i}(\eta)=1+a[\eta(x-e_i)+\eta(x+2e_i)]\, ,
$$
if $x-e_i,x+2e_i\in \Omega_N$ and by
\begin{eqnarray*}
r_{x,x+e_1}(\eta) = 1+a\left[b\left(\frac{x-e_1}{N}\right)+\eta(x+2e_1)\right] && \hbox{ if } x\in\Gamma_N^-\, ,
\\
r_{x,x+e_1}(\eta) = 1+a\left[\eta(x-e_1)+b\left(\frac{x+2e_1}{N}\right)\right] && \hbox{ if } x+e_1\in\Gamma_N^+\, ,
\end{eqnarray*}
where $\Gamma_N^-$ ,resp. $\Gamma_N^+$, stands for the left, resp. right, ``boundary'' of
$\Omega_N$:
\begin{eqnarray*}
\Gamma_N^{\pm} &=& \left\{x=(x_1,\dots,x_d)\in\Omega_N: x_1=\pm(N-1)\right\}\, .
\end{eqnarray*}

The action of the generator $\mc L_{N,b}$ on functions $f:X_N\to\bb R$ is given by
$$
(\mc L_{N,b}f)(\eta)=N^2\sum_{x\in \Gamma_N}C^b_x(\eta)\big[f(\eta^x)-f(\eta)\big]\, ,
$$
where $\Gamma_N = \Gamma_N^-\cup\Gamma_N^+$ and, for $x\in \Gamma^{\pm}_N$, the rate $C^b_x(\eta)$ is given by
$$
C^b_x(\eta)=\eta(x)\left[1-b\left(\frac{x\pm e_1}{N}\right)\right]+[1-\eta(x)]
b\left(\frac{x\pm e_1}{N}\right)\, .
$$

Notice that the generators are speeded up by $N^2$, which corresponds to the diffusive scaling.
Denote by $\{\eta_t : t\geq 0\}$ the Markov process on $X_N$ associated to the
generator $\mc L_N$ and by $\bb P_{\eta}$ its distribution if the initial configuration is $\eta$.
Note that $\bb P_{\eta}$ is a probability measure on the path space $D(\bb R_+,X_N)$, which we consider endowed with the Skorohod topology. Denote also by $\bb E_{\eta}$ the expectation with respect to $\bb P_{\eta}$.

\subsection{Hydrostatics}
\label{sechyds}

Let $\mc M=\mc M(\Omega)$ be the space of positive measures on $\Omega$ with total mass bounded by $2$ endowed with the weak topology. For each integer $N>0$, let $\pi^N:X_N\to\mc M$ be the function which associates to a configuration $\eta$ the positive measure obtained by assigning mass $N^{-d}$ to each particle of $\eta$,
$$
\pi^N(\eta)=\frac{1}{N^d}\sum_{x\in\Omega_N}\eta(x)\delta_{x/N}\, ,
$$
where $\delta_u$ is the Dirac measure on $\Omega$ concentrated on $u$.

Let $L^2(\Omega)$ be the Hilbert space of functions $G:\Omega
\to \bb C$ such that $\int_\Omega | G(u) |^2 du <\infty$ equipped with
the inner product
\begin{equation*}
\<G,J\>_2 =\int_\Omega G(u) \, {\bar J} (u) \, du\; ,
\end{equation*}
where, for $z\in\bb C$, $\bar z$ is the complex conjugate of $z$ and
$|z|^2 =z{\bar z}$. The norm of $L^2(\Omega)$ is denoted by $\|
\cdot \|_2$.

Let $H^1(\Omega)$ be the Sobolev space of functions $G$ with
generalized derivatives $\partial_{u_1} G, \dots , \partial_{u_d} G$
in $L^2(\Omega)$. $H^1(\Omega)$ endowed with the scalar product
$\<\cdot, \cdot\>_{1,2}$, defined by
\begin{equation*}
\<G,J\>_{1,2} = \< G, J \>_2 + \sum_{j=1}^d
\<\partial_{u_j} G \, , \, \partial_{u_j} J \>_2\;,
\end{equation*}
is a Hilbert space. The corresponding norm is denoted by
$\|\cdot\|_{1,2}$. For each $G$ in $H^1(\Omega)$ we denote by $\nabla G$ its generalized gradient:
$\nabla G=(\partial_{u_1}G,\ldots, \partial_{u_d}G)$.

\renewcommand{\labelenumi}{({\bf S\theenumi})}

Let $\overline\Omega=[-1,1]\times\bb T^{d-1}$ and denote by $\mc C^m_0 (\overline{\Omega})$, $1\leq m\leq
+\infty$,  the space of
$m$-continuously differentiable real functions defined on $\overline{\Omega}$ which
vanish at the boundary $\Gamma$. Let $\varphi:[0,1]\to \bb R_+$ be given by $\varphi (r) = r (1+ ar)$ and let $\| \cdot \|$ be the Euclidean norm:
$\|(v_1,\ldots,v_d ) \|^2 =\sum_{1\le i\le d} v_i^2$.  A function
$\rho :\Omega \to [0,1]$ is said to be a weak solution of the elliptic
boundary value problem
\begin{equation}
\label{f01}
 \left\{ \begin{array}{lll}
 \Delta \varphi(\rho) \; =\; 0 \, & \hbox{ on } \Omega\, ,\\ 
\rho \;=\; b & \hbox{ on } \Gamma\, ,
\end{array}
     \right. 
\end{equation}
if
\begin{enumerate}
\item $\rho$ belongs to $H^1(\Omega)$:
\begin{equation*}
\int_\Omega {\parallel\nabla \rho(u)\parallel}^2 
du \;<\; \infty \; .
\end{equation*}

\item For every function $G\in {\mc C}^{2}_0\left(\overline\Omega \right)$,
\begin{equation*}
\int_\Omega \big(\Delta G\big)(u) \, \varphi \big(\rho(u)\big) \, du 
=\int_\Gamma \varphi(b(u)) \, \text{\bf n}_1 (u) \,
(\partial_{u_1}  G) (u) \text{d} \text{S}\; ,
\end{equation*}
where {\bf n}=$(\text{\bf n}_1,\ldots ,\text{\bf n}_d)$ stands for the
outward unit normal vector to the boundary surface $\Gamma$ and
$\text{d} \text{S}$ for an element of surface on $\Gamma$.
\end{enumerate}
Existence and uniqueness of weak solutions of equation \eqref{f01} have been established in \cite{FLM}, Section 7. We denote by $\bar\rho$ the unique weak solution of the elliptic boundary value problem \eqref{f01}.

It is clear that, for fixed $N>0$, the Markov process $\eta_t$ is irreducible. Hence, it has a unique stationary measure $\mu^N_{ss}$ on $X_N$. Let us introduce $\mc P_N=\mu^N_{ss}\circ(\pi^N)^{-1}$, which is a probability measure on $\mc M$ and describes the behavior of the empirical measure under the invariant measure. In \cite{FLM}, it has been established a law of large numbers for the empirical measure under $\mu^N_{ss}$. This hydrostatic result is equivalent to the next convergence,
\begin{eqnarray}\label{hs}
\mc P_N \;\Rightarrow\;  \delta_{\bar\rho}\; ,
\end{eqnarray}
where $\Rightarrow$ stands for weak convergence of measures on $\mc M$ and $\delta_{\bar\rho}$, for the Dirac measure on $\mc M$ concentrated on $\bar\rho(u)du$.

\subsection{Dynamical and Statical Large Deviations}

Let $\mc M^0$ be the subset of $\mathcal{M}$ consisting of all absolutely continuous measures with respect to the Lebesgue measure with positive density bounded by $1$:

$$
\mathcal{M}^0=\big\{\pi\in\mathcal{M}:\,\pi(du)=\rho(u)du \; \hbox{ and } \; 0\leq \rho(u)\leq 1\; \hbox{ a.e.} \big\}\, .
$$

For any $T>0$, denote by $D([0,T],\mc M)$ the set of right continuous with left limits trajectories $\pi:[0,T]\to\mc M$ endowed with the Skorohod topology.
It is clear that $\mc M^0$ is a closed subset of $\mc M$ and that $D([0,T],\mc M^0)$ is a closed subset of $D([0,T],\mc M)$.

Let $\Omega_T = (0,T)\times\Omega$ and  $\overline{\Omega_T} = [0,T]\times\overline{\Omega}$. For $1\leq m,n\leq +\infty$, we denote by $\mc C^{m,n}(\overline{\Omega_T})$ the space of functions $G = G_t(u): \overline{\Omega_T}\to \bb R$ with $m$ continuous derivatives in time and $n$ continuous derivatives in space. We also denote by $\mc C^{m,n}_0(\overline{\Omega_T})$ (resp. $\mc C^{\infty}_c(\Omega_T)$) the set of functions in $\mc C^{m,n}(\overline{\Omega_T})$ (resp. $\mc C^{\infty,\infty}(\overline{\Omega_T})$) which vanish at $[0,T]\times\Gamma$ (resp. with compact support in $\Omega_T$).

Let the energy $\mc Q_T:D([0,T],\mc M)\to[0,+\infty]$ be the functional given by 
\begin{eqnarray*}
\mc Q_T(\pi) = \sum_{i=1}^d \sup_{G\in\mc C^{\infty}_c(\Omega_T)}\left\{2\int_0^T\<\pi_t,\partial_{u_i}G_t\>\; dt- \int_0^Tdt\int_{\Omega} du\; G(t,u)^2\right\}\, .
\end{eqnarray*}

For each function $G$, let $\hat J_G = \hat
J_{G,T}:D([0,T],\mc M^0)\to\bb R$ be the functional given by
\begin{eqnarray*}
\hat J_{G}(\pi) & = & \langle\pi_T,G_T\rangle-\langle\pi_0,G_0\rangle -
\int_0^T \langle \pi_t,\partial_tG_t\rangle \, dt \\
& & -\int_0^T \langle\varphi(\rho_t),\Delta G_t\rangle \, dt
\;+\; \int_0^T dt
\int_{\Gamma^+}\varphi(b) \, \partial_{u_1}G \, dS \\
& & -\int_0^T dt \int_{\Gamma^-} \varphi(b) \, \partial_{u_1}G \, dS \;-\;
\frac{1}{2}\int_0^T \langle\sigma(\rho_t),\Vert\nabla
G_t\Vert^2\rangle \, dt \; , 
\end{eqnarray*}
where $\sigma(r)=2r(1-r)(1+2ar)$ is the mobility of the system, $\pi_t(du) =
\rho_t(u) du$ and where, for a
measure $\vartheta$ in $\mc M$ and a continuous function $G:\Omega \to \bb R$,
$\<\vartheta, G\>$ stands for the integral of $G$ with respect to $\vartheta$:
\begin{equation*}
\<\vartheta, G\> \;=\; \int_{\Omega} G(u) \, \vartheta(du)\;.
\end{equation*}
Define $J_G = J_{G,T}:D([0,T],\mc M)\to\bb R$ by
\begin{equation*}
J_G (\pi) =
\begin{cases}
\displaystyle \hat J_G (\pi) & \hbox{ if }  \pi \in D([0,T],\mc M^0) ,\\ 
+\infty & \hbox{ otherwise .}
\end{cases}
\end{equation*}

Let $I_T:D([0,T],\mc M)\to[0,+\infty]$ be the functional given by

$$
I_T(\pi) =
\begin{cases}
\displaystyle \sup_{G\in\mc C^{1,2}_0(\Omega_T)} \!\left\{J_G(\pi)\right\} & \hbox{ if } \mc Q_T(\pi)<\infty,\\
+\infty & \hbox{ otherwise }.
\end{cases}
$$

For a measurable function $\gamma:\Omega\to[0,1]$, the dynamical large deviation rate function $I_T(\cdot|\gamma):D([0,T],\mc M)\to [0,+\infty]$ is given by

$$
I_T(\pi|\gamma) =
\begin{cases}
I_T(\pi) & \hbox{ if } \pi_0(du)=\gamma(u)du\, ,\\
+\infty & \hbox{ otherwise}\,.
\end{cases}
$$

In \cite{FLM}, it has been proven that the empirical measure satisfies a dynamical large deviation principle with rate function $I_T(\cdot|\gamma)$.

Following \cite{FW, BDGJL, BG} we define $V:\mc M\to\bb [0,+\infty]$ as the quasi potential for the rate function $I_T(\cdot|\bar\rho)$:
\begin{eqnarray*}
V(\vartheta) = \inf\left\{I_T(\pi):\; T>0, \;\pi\in D([0,T],\mc M)\;\hbox{ and }\; \pi_T = \vartheta\right\}\, ,
\end{eqnarray*}
which measures the minimal cost to produce the measure $\vartheta$ starting from $\bar\rho(u)du$.
It has been proven in \cite{FLM}, Section 4, that if $I_T(\pi|\bar\rho)$ is finite then $\pi$ belongs to $C([0,T],\mc M^0)$. Therefore we may restrict the infimum in the definition of $V(\vartheta)$ to paths in $C([0,T],\mc M^0)$ and if $V(\vartheta)$ is finite, $\vartheta$ belongs to $\mc M^0$. Reciprocally, we will prove in Subsection \ref{secsrf} that $V$ is bounded on $\mc M^0$.

We are now ready to state the main result.
\begin{theorem}
The measure $\mc P_N$ satisfies a large deviation principle on $\mc M$ with speed $N^d$ and lower semicontinuous rate function $V$. Namely, for each closed set $\mc C\subset\mc M$ and each open set $\mc O\subset\mc M$,
\begin{eqnarray*}
\limsup_{N\to\infty}\frac{1}{N^d}\log\mc P_N(\mc C) \leq -\inf_{\vartheta\in \mc C}V(\vartheta) \, ,
\end{eqnarray*}
\begin{eqnarray*}
\liminf_{N\to\infty}\frac{1}{N^d}\log\mc P_N(\mc O) \geq -\inf_{\vartheta\in \mc O}V(\vartheta) \, .
\end{eqnarray*}
\end{theorem}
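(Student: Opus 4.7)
I would follow the Freidlin--Wentzell approach as implemented by Bodineau and Giacomin \cite{BG}, using as input the dynamical large deviation principle for $I_T(\cdot|\gamma)$ proved in \cite{FLM} and the hydrostatic convergence $\mc P_N\Rightarrow \delta_{\bar\rho}$ of \eqref{hs}. Three auxiliary properties of the quasi potential $V$ will do most of the work: (a) $V$ is lower semicontinuous on $\mc M$; (b) $V$ is continuous at $\bar\rho$ in the $L^2$ topology; and (c) any $\pi\in D([0,T],\mc M^0)$ with $I_T(\pi)\leq M$ and $T$ large visits, at some intermediate time, an $L^2$-neighborhood of $\bar\rho$ of radius $\theta(T,M)\downarrow 0$ as $T\to\infty$. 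Property (c) is obtained by testing $J_G$ against a suitable test function built from $\varphi(\rho)-\varphi(\bar\rho)$, so that the dissipative term $\tfrac12\<\sigma(\rho),\|\nabla G\|^2\>$ forces time-averaged $L^2$ proximity to $\bar\rho$ once $T$ is large enough.

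\textbf{Lower bound.} Fix an open $\mc O\subset\mc M$ and $\vartheta\in\mc O$ with $V(\vartheta)\leq \inf_{\mc O}V+\e$. Pick $T>0$ and $\pi\in C([0,T],\mc M^0)$ with $\pi_0=\bar\rho(u)\,du$, $\pi_T=\vartheta$ and $I_T(\pi)\leq V(\vartheta)+\e$, and let $\mc N(\pi)\subset\{\pi'_T\in\mc O\}$ be a small Skorohod neighborhood. Writing
$$\mc P_N(\mc O)\;\geq\;\int_{\{\pi^N(\eta)\in \mc W\}}\Pb_\eta\!\bigl(\pi^N_{\cdot}\in \mc N(\pi)\bigr)\,d\mu^N_{ss}(\eta)$$
for a weak neighborhood $\mc W$ of $\bar\rho$, the hydrostatic limit makes $\mu^N_{ss}(\pi^N(\eta)\in\mc W)\to 1$. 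A short initial perturbation of $\pi$ matches the random empirical initial profile $\pi^N(\eta)\in \mc W$ at cost $\leq \e$, and the dynamical LDP lower bound of \cite{FLM} then delivers $\liminf N^{-d}\log \mc P_N(\mc O)\geq -V(\vartheta)-2\e$.

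\textbf{Upper bound.} For a closed $\mc C\subset\mc M$ with $\kappa=\inf_{\mc C}V$ and $\bar\rho\notin\mc C$, the Freidlin--Wentzell decomposition into excursions between a small neighborhood $\mc V_\delta$ of $\bar\rho$ and the complement of a larger one, combined with \eqref{hs}, reduces the bound to
$$\liminf_{\delta\downarrow 0}\,\inf\bigl\{I_{T_\delta}(\pi):\ \pi_0(du)=\gamma(u)du,\ \gamma\in\mc V_\delta,\ \pi_s\in\mc C \text{ for some } s\leq T_\delta\bigr\}\;\geq\;\kappa.$$
When $T_\delta$ stays bounded along a subsequence, the dynamical LDP upper bound on a compact time window together with the lower semicontinuity (a) closes the argument as in \cite{BG}. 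The novelty is the regime $T_\delta\to\infty$, which is the case missing in \cite{BG}. For a minimizing $\pi$ in this regime, apply (c) to find a time $s^\ast$ at which $\pi_{s^\ast}$ is $L^2$-close to $\bar\rho$; by (b) we have $V(\pi_{s^\ast})\leq \e$, so by definition of $V$ we can prepend a short trajectory from $\bar\rho$ to $\pi_{s^\ast}$ of cost $\leq 2\e$; concatenating with the original path on $[s^\ast,T_\delta]$ produces an admissible trajectory from $\bar\rho$ to $\mc C$ whose duration is bounded uniformly in $\delta$ once we truncate at the next hitting time of $\mc C$, and whose cost exceeds $I_{T_\delta}(\pi)$ by at most $2\e$. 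The dynamical upper bound on this bounded time window yields the required estimate. Property (a) is proven directly, avoiding the static variational formula of \cite{DLS,BDGJL}: given $\vartheta_n\to\vartheta$ realizing the $\liminf$ of $V(\vartheta_n)$, concatenate near-optimal trajectories reaching $\vartheta_n$ with a short $L^2$-continuity bridge from $\vartheta_n$ to $\vartheta$ obtained after a controlled smoothing step that brings both endpoints into $\mc M^0$ with bounded $H^1$ norm.

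\textbf{Main obstacle.} The technical heart of the argument is property (b): given $\gamma$ with $\|\gamma-\bar\rho\|_2$ small, one must exhibit a trajectory from $\bar\rho$ to $\gamma$ of small $I_T$-cost. This requires producing a driving field whose energy, measured through the mobility $\sigma$ appearing in $\hat J_G$, is quantitatively controlled by $\|\gamma-\bar\rho\|_2$, while respecting the Dirichlet data $b$ on $\Gamma$ encoded in the elliptic problem \eqref{f01}. Everything else---the Freidlin--Wentzell excursion decomposition, the transfer between empirical and deterministic initial profiles, and the regularity of approximately optimal trajectories---is an adaptation of \cite{FLM,BG} to the $d$-dimensional model with drift parameter $a$.
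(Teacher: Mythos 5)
Your overall architecture coincides with the paper's: the lower bound via hydrostatics plus the dynamical LDP lower bound, the Freidlin--Wentzell excursion representation of $\mu^N_{ss}$ for the upper bound, and, for the regime $T_\delta\to\infty$ missed in \cite{BG}, exactly the paper's two ingredients --- Lemma \ref{lem1} (long cheap paths must visit an $L^2$-ball around $\bar\rho$) and Theorem \ref{th2sld} (continuity of $\bb V$ at $\bar\rho$ in $L^2$) --- combined by the same concatenation $V(\pi_{\widetilde T})\leq \bb V(\rho_{t_k})+I_{[t_k,\widetilde T]}(\pi)<\e+(V(\mc C)-\e)$. One small correction there: the contradiction is obtained directly at the level of the quasi potential (the endpoint lies in $\mc C$ but has $V<V(\mc C)$), so no further application of the dynamical upper bound ``on a bounded time window'' is needed or made.

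The genuine gap is in your proof of property (a), the lower semicontinuity of $V$, which is part of the statement. You propose to bridge from $\vartheta_n$ to $\vartheta$ by an ``$L^2$-continuity bridge'' after a smoothing step. This faces two obstructions. First, $\vartheta_n\to\vartheta$ only in the weak topology of $\mc M$, and neither the $L^1$-contraction of Lemma \ref{lem1-ann} nor anything else in the paper upgrades this to $L^2$-closeness of $S_\tau\vartheta_n$ and $S_\tau\vartheta$; a weak-to-strong continuity of the nonlinear semigroup would have to be proved. Second, and more seriously, the interpolation estimate of Lemma \ref{quapotest} controls only bridges emanating from $\bar\rho$: its proof uses that $\bar\rho$ is stationary, so that $\int_\Omega\nabla\varphi(\bar\rho)\cdot\nabla G=0$ and the cost reduces to terms proportional to $\Vert\rho-\bar\rho\Vert_2^2$ and $\int_0^1\alpha(t)^2dt$. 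For a bridge between two generic profiles $\rho^1,\rho^2$ in $\bb D^h_\delta$ the extra term $\int_0^1dt\int_\Omega\nabla\varphi(\rho^1)\cdot\nabla G_t$ survives; absorbing it into the dissipative term costs an amount of order $h$, which is bounded but does not vanish as $\Vert\rho^1-\rho^2\Vert_2\to 0$. So the bridge cost does not tend to zero, and your argument would at best prove $V(\vartheta)\leq\liminf_nV(\vartheta_n)+O(1)$. The paper avoids this entirely (Theorem \ref{lscsld}): it takes near-optimal trajectories ending at $\vartheta^n$, uses compactness of the level sets of $I_T$ (padding by $\overline\vartheta$ when the $T_n$ are bounded, shifting to $[-T_n,0]$ and extracting by a Cantor diagonal argument when $T_n\to\infty$) and concludes via the bound \eqref{est2sld}, $I(\pi)\geq V(\pi_0)$ for paths on $(-\infty,0]$, which itself rests on Lemma \ref{lem1} and Theorem \ref{th2sld}. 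You should replace your bridge construction by this compactness argument, or supply the missing semigroup continuity and a non-stationary version of Lemma \ref{quapotest} --- the latter does not appear to be available.
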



\section{The Hydrodynamic Equation}


We review here the hydrodynamic behavior and examine weak solutions of the hydrodynamic equation \eqref{f02}. We start with the former.

For a Banach space $(\bb B,\Vert\cdot\Vert_{\bb B})$ and $T>0$ we
denote by $L^2([0,T],\bb B)$ the Banach space of measurable functions
$U:[0,T]\to\bb B$ for which
\begin{equation*}
\Vert U\Vert^2_{L^2([0,T],\bb B)} \;=\; 
\int_0^T\Vert U_t\Vert_{\bb B}^2\, dt \;<\; \infty
\end{equation*}
holds.

Fix $T>0$ and a profile
$\rho_0\colon \overline{\Omega} \to [0,1]$. A measurable function
$\rho : \overline{\Omega_T} \to [0,1]$ is said to be a weak
solution of the initial boundary value problem
\begin{equation}
\label{f02}
\left\{ 
\begin{array}{l}
\partial_t \rho = \Delta \varphi\big(\rho\big)\, , \\ 
\rho (0 ,\cdot) =\; \rho_0 (\cdot) \, ,   \\
\rho (t, \cdot){\big\vert_\Gamma} =\; b (\cdot) \quad 
\text {for } 0\le t\le T \;,
\end{array}
\right. 
\end{equation}
in the layer $[0,T]\times \Omega$ if

\begin{enumerate}
\item[\bf{(H1)}] $\rho$ belongs to $L^2 \left( [0,T] , H^1(\Omega)\right)$:
\begin{equation*}
\int_0^T d s \, \left( \int_\Omega {\parallel\nabla \rho(s,u)\parallel}^2 
du \right)<\infty \; ;
\end{equation*}

\item[\bf{(H2)}] For every function $G=G_t(u)$ in
  ${\mc C}^{1,2}_0(\overline{\Omega_T})$,
\begin{align*}
& \int_\Omega du \, \big\{ G_T(u)\rho(T,u)-G_0(u)\rho_0 (u)\big\} -
\int_0^T ds \int_\Omega d u \,  (\partial_s G_s)(u)\rho(s,u) \\
& \quad =\; \int_0^T d s \int_\Omega d u \,
(\Delta G_s)(u) \varphi \big(\rho(s,u)\big) 
\; -\; \int_0^T d s \int_\Gamma \varphi(b(u)) {\text{\bf n}}_1 (u) 
(\partial_{u_1} G_s (u)) \text{d} \text{S}  \; .
\end{align*}
\end{enumerate}

Existence and uniqueness of weak solutions of equation \eqref{f02} has been established in \cite{FLM}, Section 7.

\begin{theorem}
Fix a measurable function $\rho_0:\Omega\to[0,1]$ and a sequence of configurations $\{\eta^N: N\geq 1\}$ with $\eta^N$ in $X_N$ and such that the sequence of measures $\pi^N(\eta^N)$ converges to $\rho_0$ in $\mc M$. Then, under $\bb P_{\eta^N}$ and for each $t$ in $[0,T]$, the sequence of random variables $\pi^N_t = \pi^N\circ\eta_t:D(\bb R_+,X_N)\to \mc M$ converges in probability to the deterministic measure $\rho(t,u)du$, where $\rho$ is the weak solution of the initial boundary value problem \eqref{f02}.
\end{theorem}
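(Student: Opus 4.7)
The plan is to follow the classical martingale and entropy approach of Guo--Papanicolaou--Varadhan, adapted to the boundary-driven setting as in \cite{FLM}. Since the result concerns convergence in probability at each fixed $t$ to a deterministic measure, it suffices to show that the laws $Q^N = \bb P_{\eta^N}\circ (\pi^N_\cdot)^{-1}$ of the trajectory $\pi^N_\cdot$ on $D([0,T],\mc M)$ are tight and that every limit point is concentrated on the unique weak solution of \eqref{f02}.

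First, I would establish tightness of $\{Q^N\}$. Since $\mc M$ is compact in the weak topology, by Aldous' criterion it is enough to control, for each $G\in\mc C^\infty_0(\ov\Omega)$, the oscillation of $t\mapsto \<\pi^N_t,G\>$; this follows from Dynkin's martingale representation together with a crude bound on the quadratic variation arising from the jump rates of $\mc L_N$.

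Second, I would characterize each limit point $Q^*$ as supported on paths satisfying \textbf{(H2)}. For $G\in\mc C^{1,2}_0(\ov{\Omega_T})$, consider the martingale
\begin{equation*}
M^{N,G}_t \;=\; \<\pi^N_t, G_t\> - \<\pi^N_0, G_0\> - \int_0^t \big[\<\pi^N_s,\partial_s G_s\> + \mc L_N \<\pi^N_s, G_s\>\big]\, ds.
\end{equation*}
Summation by parts rewrites $\mc L_N \<\pi^N_s, G_s\>$ as a discrete Laplacian of $G_s$ applied to the local function $\eta(x)+a\,\eta(x)\eta(x+e_i)$, plus boundary contributions produced by $\mc L_{N,b}$ together with the modified bulk rates near $\Gamma_N^\pm$. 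The quadratic variation of $M^{N,G}$ is of order $N^{-d}$, so this martingale converges to $0$ in $L^2$ uniformly in $t$.

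Third, the main obstacle is a replacement lemma allowing one to pass from microscopic cylinder functions such as $\eta(x)\eta(x+e_i)$ to a function of the empirical density averaged on mesoscopic boxes, so that the bulk drift converges to $\int_\Omega \Delta G_s\,\vp(\rho_s)\, du$ with $\vp(r)=r(1+ar)$. I would obtain this via one-block and two-block super-exponential estimates controlled by the Dirichlet form of $\mc L_N$ against a suitable reference measure. Near $\Gamma_N^\pm$ additional care is required, because the boundary flips in $\mc L_{N,b}$ and the modified bulk rates must combine to yield exactly the surface term $\int_\Gamma \vp(b)\,\mb n_1\,\partial_{u_1}G_s\,d\mathrm S$ in \textbf{(H2)}; these boundary estimates are exactly those carried out in \cite{FLM} as part of the proof of the dynamical large deviation principle.

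Finally, I would combine the above with an energy estimate to deduce that $Q^*$-a.s.\ the limit trajectory $\pi_s(du)=\rho(s,u)\,du$ lies in $L^2([0,T],H^1(\Omega))$, i.e.\ \textbf{(H1)} holds, while the hypothesis $\pi^N(\eta^N)\to\rho_0$ supplies the initial condition. Together with \textbf{(H2)}, obtained by passing to the limit in the martingale identity, this identifies $Q^*$ as concentrated on the unique weak solution of \eqref{f02} furnished by \cite{FLM}. Since the limit point is deterministic and unique, the full sequence $\pi^N_t$ converges in probability to $\rho(t,u)\,du$ for every $t\in[0,T]$.
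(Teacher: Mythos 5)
Your outline is correct and follows exactly the route the paper relies on: the paper itself gives no proof of this theorem, deferring entirely to \cite{ELS2} (with the boundary estimates and uniqueness of weak solutions taken from \cite{FLM}), and the martingale decomposition, tightness, one-block/two-block replacement, boundary surface term, energy estimate, and uniqueness argument you describe are precisely the ingredients of that standard proof. The only caveat is that your proposal, like the paper, leaves the genuinely hard steps (the super-exponential replacement lemma and the boundary replacement near $\Gamma_N^{\pm}$) to the cited references rather than carrying them out.
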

The proof of this result can be found in \cite{ELS2}.

\subsection{Weak solutions}

In this subsection we discuss some properties of weak solutions of \eqref{f02}. Most of them has been proved in \cite{FLM}.
The first one, which is Lemma 7.2 in \cite{FLM}, states that the $L^1(\Omega)$-norm of the difference of two weak
solutions of the boundary value problem \eqref{f02} decreases in time.
\begin{lemma}
\label{lem1-ann}
Fix two profiles $\rho_0^1$, $\rho_0^2: \Omega \to [0,1]$.  Let
$\rho^j$, $j=1$, $2$, be weak solutions of \eqref{f02} with initial
condition $\rho_0^j$. Then, $\|\rho_t^1 -\rho_t^2\|_1$ decreases in
time. In particular, there is at most one weak solution of
\eqref{f02}.
\end{lemma}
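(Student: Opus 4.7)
The plan is to establish the $L^1$-contraction by a Kato-type argument exploiting the strict monotonicity of $\varphi$. Set $w_t = \rho_t^1 - \rho_t^2$ and $v_t = \varphi(\rho_t^1) - \varphi(\rho_t^2)$. Because $\varphi'(r) = 1 + 2ar > 0$ on $[0,1]$ (the hypothesis $a > -\frac{1}{2}$), the map $\varphi$ is a strictly increasing homeomorphism of $[0,1]$; consequently $\mathrm{sgn}(w_t(u)) = \mathrm{sgn}(v_t(u))$ a.e. Both $w$ and $v$ lie in $L^2([0,T], H^1(\Omega))$ by (H1) and vanish on $[0,T] \times \Gamma$ in the trace sense, since $\rho^1 = \rho^2 = b$ there. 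Subtracting the weak formulations (H2) for $\rho^1$ and $\rho^2$ the boundary integrals containing $\varphi(b)$ cancel, yielding
\begin{equation*}
\int_\Omega G_T w_T\, du \;-\; \int_\Omega G_0 w_0\, du \;-\; \int_0^T\!\!\int_\Omega (\partial_s G)\, w\, du\, ds \;=\; \int_0^T\!\!\int_\Omega (\Delta G)\, v\, du\, ds
\end{equation*}
for every $G \in \mc C^{1,2}_0(\overline{\Omega_T})$, i.e.\ $\partial_t w = \Delta v$ in the distributional sense in $\Omega_T$.

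Next, I would fix a family $F_\epsilon \in \mc C^2(\R)$ of smooth convex approximations of $r \mapsto |r|$ with $F_\epsilon(0) = F_\epsilon'(0) = 0$, $|F_\epsilon'| \le 1$, $F_\epsilon'' \ge 0$ and $F_\epsilon' \to \mathrm{sgn}$ pointwise, and substitute $G_s(u) = F_\epsilon'(v_s(u))$ in the identity above (after a time-mollification of $v$ to ensure admissibility of the test function). Since $F_\epsilon'(0) = 0$ and $v|_\Gamma = 0$, the spatial integration by parts produces no boundary term, and the right-hand side equals
\begin{equation*}
-\int_0^T\!\!\int_\Omega F_\epsilon''(v_s)\, \|\nabla v_s\|^2\, du\, ds \;\le\; 0.
\end{equation*}

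For the left-hand side I would write $F_\epsilon'(v) = F_\epsilon'(w) + [F_\epsilon'(v) - F_\epsilon'(w)]$: the first summand produces, by the standard chain rule (valid because $w \in L^2([0,T], H^1_0(\Omega)) \cap C([0,T], L^2(\Omega))$ with $\partial_t w \in L^2([0,T], H^{-1}(\Omega))$, as follows from the PDE), the quantity $\int_\Omega F_\epsilon(w_T)\, du - \int_\Omega F_\epsilon(w_0)\, du$, which tends to $\|w_T\|_1 - \|w_0\|_1$ as $\epsilon \downarrow 0$; the second summand, after substituting the PDE, becomes a remainder whose nonpositive limit relies on the identity $\nabla v = \varphi'(\rho^1)\nabla w$ on $\{\rho^1 = \rho^2\}$ (so that $\nabla v \cdot \nabla w \ge 0$ where $F_\epsilon''$ concentrates), together with Stampacchia's theorem $\nabla w = \nabla v = 0$ a.e.\ on $\{w = 0\} = \{v = 0\}$.

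Passing to the limit yields $\|w_T\|_1 \leq \|w_0\|_1$, and applying the argument on any subinterval $[s,t] \subset [0,T]$ gives the monotonicity of $t \mapsto \|\rho_t^1 - \rho_t^2\|_1$. Uniqueness of weak solutions then follows by taking $\rho_0^1 = \rho_0^2$, which forces $\|\rho_t^1 - \rho_t^2\|_1 \equiv 0$. The main obstacle is the rigorous treatment of the test-function substitution and of the limit $\epsilon \downarrow 0$ in the remainder: this demands careful mollification arguments in time and the use of Stampacchia's theorem to control the concentration of $F_\epsilon''$ on the level set $\{w = 0\}$, given the limited regularity of weak solutions.
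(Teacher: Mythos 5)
The paper gives no proof of this statement: it is imported verbatim as Lemma 7.2 of \cite{FLM}, so there is no internal argument to compare yours against. Your Kato-type $L^1$-contraction plan is the standard route for $\partial_t\rho=\Delta\varphi(\rho)$ with matching boundary data, and the essential ingredients are all present: the sign identity $\mathrm{sgn}(w)=\mathrm{sgn}(v)$ from the strict monotonicity of $\varphi$, the cancellation of the boundary terms, the chain rule for $w\in L^2([0,T],H^1_0(\Omega))$ with $\partial_t w=\Delta v\in L^2([0,T],H^{-1}(\Omega))$, and Stampacchia's theorem. Two remarks. First, the detour through $F_\epsilon'(v)$ is superfluous: testing directly with $F_\epsilon'(w_s)$ and using the chain rule gives in one step
\begin{equation*}
\int_\Omega F_\epsilon(w_T)\,du-\int_\Omega F_\epsilon(w_0)\,du
\;=\;-\int_0^T\!\!\int_\Omega F_\epsilon''(w)\,\nabla v\cdot\nabla w\,du\,ds\,.
\end{equation*}
Second, and more importantly, your justification that the surviving term is asymptotically nonpositive is imprecise as stated: $\nabla v\cdot\nabla w$ is \emph{not} pointwise nonnegative on the set $\{0<|w|\le\epsilon\}$ where $F_\epsilon''$ concentrates, and the identity $\nabla v=\varphi'(\rho^1)\nabla w$ holds only on $\{w=0\}$, where Stampacchia makes both gradients vanish and hence gives no information about the neighbourhood. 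The clean way to close the argument is to use the quadratic structure of $\varphi$: since $\varphi'(\rho^1)-\varphi'(\rho^2)=2aw$, one has $\nabla v=\varphi'(\rho^2)\nabla w+2aw\,\nabla\rho^1$, whence
\begin{equation*}
\int_\Omega F_\epsilon''(w)\,\nabla v\cdot\nabla w\,du\;\ge\;-\,2|a|\int_\Omega F_\epsilon''(w)\,|w|\,\|\nabla\rho^1\|\,\|\nabla w\|\,du\,,
\end{equation*}
and for the usual choice $F_\epsilon(r)=\sqrt{r^2+\epsilon^2}-\epsilon$ the factor $F_\epsilon''(w)\,|w|$ is bounded by $1$ and tends to $0$ off $\{w=0\}$, while $\nabla w=0$ a.e.\ on $\{w=0\}$; dominated convergence then kills the error term and yields $\|w_T\|_1\le\|w_0\|_1$. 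With this correction (and the routine density and time-mollification arguments you already flag), the plan goes through.
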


Next one establish monotonicity of weak solutions of \eqref{f02}. It is Lemma 7.4 in \cite{FLM}.
\begin{lemma}
\label{lembis-ann} 
Fix two profiles $\rho_0^1$, $\rho_0^2: \Omega \to [0,1]$.  Let
$\rho^j$, $j=1$, $2$, be the weak solutions of (\ref{f02}) with
initial condition $\rho_0^j$. Assume that there exists $s\ge 0$ such
that
\begin{equation*}
\lambda \big\{ u\in \Omega\ :\ \ 
\rho^1(s,u) \le\rho^2(s,u)  \big\}=1\; , 
\end{equation*} 
where $\lambda$ is the Lebesgue measure on $\Omega$. Then, for all
$t\ge s$
\begin{equation*}
\lambda \big\{ u\in \Omega\ :\ \ 
\rho^1(t,u) \le\rho^2(t,u)  \big\}=1\;.
\end{equation*}
\end{lemma}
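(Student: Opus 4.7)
The plan is to adapt the duality technique behind the $L^1$ contraction in Lemma 3.1. Set $w = \rho^1 - \rho^2$; then $w$ has zero trace on $\Gamma$ since both solutions share the boundary datum $b$. Subtracting the weak formulations (H2) for $\rho^1$ and $\rho^2$ makes the $\Gamma$-integrals cancel, and a standard time cutoff lets us restrict to any slab $[s,T]$, giving, for every $G \in \mc C^{1,2}_0([s,T]\times\overline\Omega)$,
\begin{equation*}
\int_\Omega \bigl[G_T\, w(T) - G_s\, w(s)\bigr]\, du \;-\; \int_s^T\!\!\int_\Omega \partial_\tau G \cdot w\, du\, d\tau \;=\; \int_s^T\!\!\int_\Omega \Delta G \cdot A\, w\, du\, d\tau,
\end{equation*}
where $A(\tau, u) = \int_0^1 \varphi'(\rho^2 + \theta\, w)\, d\theta$ is measurable with $c_0 \le A \le C_0$ for positive constants depending only on $a$ (since $\varphi'(r) = 1 + 2ar > 0$ on $[0,1]$ when $a > -1/2$). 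The goal is to prove that $\int_\Omega \psi\, w(T)\, du \le 0$ for every smooth nonnegative $\psi$ vanishing on $\Gamma$; the lemma then follows by density.

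The strategy is a dual parabolic argument: if $G$ solved the backward problem $\partial_\tau G + A\,\Delta G = 0$ on $(s,T)\times\Omega$ with $G(T,\cdot) = \psi$ and $G|_\Gamma = 0$, the identity above would collapse to $\int \psi\, w(T)\, du = \int G(s,\cdot)\, w(s,\cdot)\, du \le 0$, since the maximum principle forces $G \ge 0$ while $w(s,\cdot) \le 0$ a.e.\ by hypothesis. Because $A$ is merely bounded measurable, I would mollify it in $(\tau,u)$ to obtain smooth $A_n$ with $c_0/2 \le A_n \le 2C_0$ and $A_n \to A$ in $L^2((s,T)\times\Omega)$, and solve the smooth backward problem classically to get $G_n \in \mc C^{1,2}_0([s,T]\times\overline\Omega)$ with $G_n \ge 0$ and $G_n(T)=\psi$. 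Using $G_n$ as a test function and substituting $\partial_\tau G_n = -A_n \Delta G_n$ yields
\begin{equation*}
\int_\Omega \psi\, w(T)\, du \;-\; \int_\Omega G_n(s,\cdot)\, w(s,\cdot)\, du \;=\; \int_s^T\!\!\int_\Omega (A - A_n)\, \Delta G_n \cdot w\, du\, d\tau.
\end{equation*}

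The main obstacle is obtaining a uniform-in-$n$ bound on $\Delta G_n$ in $L^2$, needed to control the error term. Multiplying the regularized backward equation by $\Delta G_n$ and integrating over $\Omega$, the boundary terms vanish because $G_n|_\Gamma \equiv 0$ implies $\partial_\tau G_n|_\Gamma \equiv 0$, leaving
\begin{equation*}
\tfrac{1}{2}\,\partial_\tau \|\nabla G_n(\tau)\|_2^2 \;=\; \int_\Omega A_n\, (\Delta G_n)^2\, du \;\ge\; \tfrac{c_0}{2}\,\|\Delta G_n(\tau)\|_2^2.
\end{equation*}
Integrating over $[s,T]$ gives $\|\Delta G_n\|_{L^2((s,T)\times\Omega)}^2 \le c_0^{-1}\,\|\nabla \psi\|_2^2$, uniformly in $n$. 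By Cauchy--Schwarz the error term is bounded by $\|A - A_n\|_{L^2}\,\|\Delta G_n\|_{L^2}\,\|w\|_\infty \to 0$; meanwhile $\int G_n(s,\cdot)\, w(s,\cdot)\, du \le 0$ for every $n$ by positivity of $G_n(s,\cdot)$ and nonpositivity of $w(s,\cdot)$. Passing to the limit yields $\int \psi\, w(T)\, du \le 0$, as desired. The delicate bookkeeping lies in the regularity/approximation steps---existence and smoothness of $G_n$ up to $\Gamma$, validity of the cutoff-based weak identity on $[s,T]$, and ensuring the mollification of $A$ preserves the strict positive lower bound---but each of these is routine once the dual structure has been set up.
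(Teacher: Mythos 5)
Your argument is correct, but note first that the paper does not actually prove this statement: it is quoted verbatim as Lemma 7.4 of \cite{FLM}, so there is no internal proof to compare against. What you give is the classical Holmgren--Oleinik duality argument: write $\varphi(\rho^1)-\varphi(\rho^2)=A\,(\rho^1-\rho^2)$ with $0<c_0\le A\le C_0$ (valid because $\varphi'(r)=1+2ar\ge\min(1,1+2a)>0$ on $[0,1]$ for $a>-1/2$), solve a regularized backward dual problem, and control the commutator term via the uniform estimate $\int_s^T\|\Delta G_n\|_2^2\,dt\le c_0^{-1}\|\nabla\psi\|_2^2$ obtained by pairing the dual equation with $\Delta G_n$. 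The key mechanisms are all correctly in place: the boundary integrals in (H2) cancel because both solutions carry the same datum $b$, the maximum principle gives $G_n\ge0$, and the error term is $O(\|A-A_n\|_{L^2})$ since $|w|\le1$; taking $\psi\in\mc C^\infty_c(\Omega)$ disposes of the corner compatibility issues so that $G_n$ really lies in $\mc C^{1,2}_0$. The proof in \cite{FLM} --- and the one most consonant with the machinery this paper builds in its energy-estimates subsection, namely $\partial_t\rho\in L^2([0,T],H^{-1}(\Omega))$ and the chain-rule identity of type \eqref{ibp0} --- is instead direct: one pairs $\partial_t(\rho^1-\rho^2)$ with a smooth nondecreasing approximation of the positive-part sign of $\varphi(\rho^1)-\varphi(\rho^2)$ (admissible in $H^1_0(\Omega)$ because the traces coincide) and concludes that $t\mapsto\int_\Omega(\rho^1_t-\rho^2_t)^+\,du$ is nonincreasing, exactly as the $L^1$-contraction of Lemma \ref{lem1-ann} is obtained with $|\cdot|$ in place of $(\cdot)^+$. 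The direct route buys economy (no auxiliary PDE, no parabolic regularity theory) and tolerates degenerate $\varphi'$; your route is self-contained, independent of the $H^{-1}$ framework, but leans on the uniform parabolicity $\varphi'\ge c_0>0$, which does hold here.
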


\begin{corollary}\label{lem05sld}
For every $\delta>0$, there exists $\epsilon>0$ such that for all weak solution $\rho$ of \eqref{f02} with any initial profile $\rho_0$,
\begin{eqnarray*}
\epsilon\leq\rho(t,u)\leq 1-\epsilon \qquad\hbox{ for almost all }  (t,u) \hbox{ in } [\delta,+\infty)\times\overline\Omega\, .
\end{eqnarray*}

\end{corollary}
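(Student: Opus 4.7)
The plan is to use the comparison Lemma \ref{lembis-ann} to sandwich any weak solution between two canonical extremal ones, exploit time--monotonicity to reduce the bound to a single time slice, and then invoke a parabolic minimum principle.

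First I would introduce $\rho^-$ and $\rho^+$, the unique weak solutions of \eqref{f02} with constant initial data $0$ and $1$, respectively. For any weak solution $\rho$ of \eqref{f02}, the bound $0\le \rho_0\le 1$ together with Lemma \ref{lembis-ann} yield
\begin{equation*}
\rho^-(t,u)\;\le\;\rho(t,u)\;\le\;\rho^+(t,u)\qquad\hbox{for a.e. }u\in\Omega\hbox{ and all }t\ge 0,
\end{equation*}
so it will be enough to produce $\epsilon>0$ with $\rho^-(t,u)\ge\epsilon$ and $\rho^+(t,u)\le 1-\epsilon$ a.e. on $[\delta,\infty)\times\overline\Omega$. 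Next I would observe that, for each $s\ge 0$, the shifted trajectory $t\mapsto \rho^-(t+s,\cdot)$ is itself a weak solution of \eqref{f02} with initial datum $\rho^-(s,\cdot)\ge 0 = \rho^-(0,\cdot)$; applying Lemma \ref{lembis-ann} gives $\rho^-(t+s,u)\ge \rho^-(t,u)$ a.e., so $t\mapsto \rho^-(t,\cdot)$ is non--decreasing, and symmetrically $t\mapsto \rho^+(t,\cdot)$ is non--increasing. It therefore suffices to establish the two bounds at the single time $t=\delta$.

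For this quantitative bound, I would exploit that the hypothesis $a>-1/2$ makes $\varphi'(r)=1+2ar$ bounded above and below by positive constants on $[0,1]$, so $\partial_t\rho=\Delta\varphi(\rho)=\mathrm{div}\big(\varphi'(\rho)\nabla\rho\big)$ is uniformly parabolic in divergence form. Standard parabolic regularity then yields H\"older continuity of $\rho^-$ on $[\delta/2,\infty)\times\overline\Omega$ with boundary trace $b$, so $\rho^-(\delta,\cdot)$ extends continuously to the compact set $\overline\Omega$. On $\Gamma$ it equals $b(u)\ge\min_{\Gamma} b>0$, while the strong parabolic minimum principle forbids an interior zero: vanishing of $\rho^-(\delta,\cdot)$ at some interior point would force $\rho^-\equiv 0$ on $[0,\delta]\times\Omega$, contradicting $\rho^-=b>0$ on $\Gamma$. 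Compactness of $\overline\Omega$ then produces a uniform $\epsilon>0$ with $\rho^-(\delta,u)\ge\epsilon$. The upper bound on $\rho^+$ is obtained by running the same argument on $\widetilde\rho:=1-\rho^+$, which solves an equation of the same form with nonlinearity $\psi(r):=\varphi(1)-\varphi(1-r)$ (still uniformly parabolic since $\psi'(r)=1+2a(1-r)>0$), initial datum $0$, and boundary datum $1-b\in(0,1)$.

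The hardest step will be this regularity input: one must justify continuity of a finite--energy weak solution up to $\overline\Omega$ and the validity of a strong minimum principle in the weak setting, despite the initial and boundary data jumping at the corner $\{0\}\times\Gamma$. Should the requisite parabolic regularity not be directly available from \cite{FLM}, the fallback would be to build an explicit comparison barrier --- for instance, a time--dependent subsolution of the form $\underline w(t,u)=c\,(1-e^{-\lambda t})\,\chi(u)$ with $\chi\in\mc C^\infty_0(\Omega)$ a smooth cutoff and $c,\lambda>0$ tuned so that $\partial_t\underline w\le \Delta\varphi(\underline w)$ holds weakly and $\underline w\le\rho^-$ on the parabolic boundary --- and a symmetric supersolution argument for the upper bound.
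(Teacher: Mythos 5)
Your proposal follows the same route as the paper: sandwich an arbitrary weak solution between the two extremal solutions started from the constant profiles $0$ and $1$ via Lemma \ref{lembis-ann}, thereby reducing the claim to a positive lower (resp.\ upper) bound for these two fixed solutions on $[\delta,\infty)\times\overline\Omega$. The paper simply asserts that last bound without argument, whereas you justify it (time-monotonicity of the extremal solutions, uniform parabolicity, regularity and the strong minimum principle, with an explicit barrier as fallback), so your write-up is, if anything, more complete than the paper's.
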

\begin{proof}
Let $\rho^0$ and $\rho^1$ be as in the statement of the previous corollary. For fixed $\delta>0$, there exists $\epsilon> 0$ such that
$$
\epsilon\leq \rho^0(t,u) \; \hbox{ and } \; \rho^1(t,u)\leq 1-\epsilon \; \hbox{ for almost all } (t,u) \hbox{ in } [\delta,\infty)\times\overline\Omega\,.
$$
This and Lemma \ref{lembis-ann} permit us to conclude the proof.
\end{proof}

Next is Lemma 7.6 in \cite{FLM}.
\begin{lemma}
\label{s02}
Fix two profiles $\rho_0^1$, $\rho_0^2: \Omega \to [0,1]$.  Let
$\rho^j$, $j=1$, $2$, be the weak solutions of (\ref{f02}) with
initial condition $\rho_0^j$. Then, 
\begin{equation*}
\int_{0}^\infty  \|\rho_t^1 -\rho_t^2 \|_1^2  \, dt
\;<\; \infty\; .
\end{equation*}
In particular,
\begin{equation*}
\lim_{t\to \infty} \|\rho_t^1 -\rho_t^2\|_1 \; =0 \; \; .
\end{equation*}
\end{lemma}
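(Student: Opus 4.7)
\noindent\textit{Sketch of proof.} The plan is an energy/relative-entropy argument that compares each solution with the stationary profile $\bar\rho$ from \eqref{f01}. First I would reduce to the case $\rho^2=\bar\rho$: by the $L^1$ triangle inequality,
\[
\|\rho^1_t-\rho^2_t\|_1^2\;\le\; 2\,\|\rho^1_t-\bar\rho\|_1^2\;+\;2\,\|\rho^2_t-\bar\rho\|_1^2,
\]
so it suffices to show that for every weak solution $\rho$ of \eqref{f02} one has $\int_0^\infty\|\rho_t-\bar\rho\|_1^2\,dt<\infty$.

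Because $a>-1/2$, the derivative $\varphi'(r)=1+2ar$ is bounded above and below by positive constants on $[0,1]$, so $\varphi$ is bi-Lipschitz on $[0,1]$. I would then introduce the relative-entropy functional
\[
H(t)=\int_\Omega\Psi\bigl(\rho(t,u),\bar\rho(u)\bigr)\,du,\qquad
\Psi(r,s)=\int_s^r\bigl[\varphi(\tau)-\varphi(s)\bigr]d\tau,
\]
which satisfies $\tfrac{c}{2}(r-s)^2\le\Psi(r,s)\le\tfrac{C}{2}(r-s)^2$ on $[0,1]^2$; in particular $H(0)\le C|\Omega|$. The key step is the identity
\[
\frac{d}{dt}H(t)\;=\;-\int_\Omega\bigl|\nabla[\varphi(\rho_t)-\varphi(\bar\rho)]\bigr|^2\,du.
\]
Formally this follows from $\partial_\rho\Psi(\rho,\bar\rho)=\varphi(\rho)-\varphi(\bar\rho)$, the equation $\partial_t\rho=\Delta\varphi(\rho)$, integration by parts (the boundary terms vanish because $\varphi(\rho)=\varphi(\bar\rho)=\varphi(b)$ on $\Gamma$), and the stationary equation $\Delta\varphi(\bar\rho)=0$ from \eqref{f01}, which kills the cross term. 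Making this rigorous for weak solutions is the main technical point: it amounts to justifying the use of $\varphi(\rho_t)-\varphi(\bar\rho)\in L^2([0,T],H^1_0(\Omega))$ (which lies in $H^1_0$ thanks to \textbf{(H1)} and the trace equality on $\Gamma$) as a test object in \textbf{(H2)}, done by a standard time mollification of $\rho$.

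Once the identity is in hand, I would close the argument with Poincar\'e: since $\varphi(\rho_t)-\varphi(\bar\rho)$ vanishes on $\Gamma$,
\[
\|\varphi(\rho_t)-\varphi(\bar\rho)\|_2^2\;\le\; C_P\,\bigl\|\nabla[\varphi(\rho_t)-\varphi(\bar\rho)]\bigr\|_2^2.
\]
Combining with $|\varphi(\rho)-\varphi(\bar\rho)|\ge c\,|\rho-\bar\rho|$ and $\|\cdot\|_1\le|\Omega|^{1/2}\|\cdot\|_2$ yields $\|\rho_t-\bar\rho\|_1^2\le C'\,\bigl\|\nabla[\varphi(\rho_t)-\varphi(\bar\rho)]\bigr\|_2^2$. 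Integrating the entropy identity over $[0,\infty)$,
\[
\int_0^\infty\|\rho_t-\bar\rho\|_1^2\,dt\;\le\; C'\,H(0)\;<\;\infty,
\]
which together with the reduction step proves the integrability assertion.

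For the second assertion, by Lemma~\ref{lem1-ann} the function $t\mapsto\|\rho^1_t-\rho^2_t\|_1$ is non-increasing, so its limit $\ell\ge 0$ as $t\to\infty$ exists. Since $\|\rho^1_t-\rho^2_t\|_1\ge\ell$ for all $t$, for every $T>0$,
\[
\ell^2\,T\;\le\;\int_0^T\|\rho^1_t-\rho^2_t\|_1^2\,dt\;\le\;\int_0^\infty\|\rho^1_t-\rho^2_t\|_1^2\,dt\;<\;\infty,
\]
forcing $\ell=0$. The only serious obstacle is the rigorous justification of the entropy identity for weak solutions; the remainder is a clean combination of Poincar\'e, the bi-Lipschitz property of $\varphi$, and Lemma~\ref{lem1-ann}.
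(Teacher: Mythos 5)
Your argument is correct, but note that the paper itself offers no proof of this statement: it is quoted verbatim from Lemma~7.6 of \cite{FLM}, so there is no in-text proof to compare against. Judged on its own, your Lyapunov-functional route is sound. The reduction to the single estimate $\int_0^\infty\|\rho_t-\bar\rho\|_1^2\,dt<\infty$ via the triangle inequality is clean; the quadratic two-sided bound on $\Psi(r,s)$ holds because $\varphi'(r)=1+2ar$ is bounded between positive constants on $[0,1]$ for $a>-1/2$; the cross term is indeed killed by the weak form of $\Delta\varphi(\bar\rho)=0$ tested against elements of $H^1_0(\Omega)$; and Poincar\'e applies since $B\rho_t=b=B\bar\rho$ gives $\varphi(\rho_t)-\varphi(\bar\rho)\in H^1_0(\Omega)$ for a.e.\ $t$. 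The technical point you flag --- the chain rule $\frac{d}{dt}\int_\Omega\Psi(\rho_t,\bar\rho)=\langle\partial_t\rho_t,\varphi(\rho_t)-\varphi(\bar\rho)\rangle_{-1,1}$ for a weak solution with $\partial_t\rho\in L^2([0,T],H^{-1}(\Omega))$ --- is exactly the kind of identity the paper establishes in \eqref{ibp0} inside the proof of Lemma~\ref{lem2}, by approximating $\rho-\beta$ in $L^2([0,T],H^1_0(\Omega))$ with $\partial_t$ convergence in $L^2([0,T],H^{-1}(\Omega))$ (Zeidler, Prop.~23.23); your $\Psi$ is even better behaved than the paper's logarithmic $h$, since it is polynomial and needs no restriction of $\rho$ away from $0$ and $1$. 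Your deduction of the second assertion from square-integrability plus the monotonicity of Lemma~\ref{lem1-ann} is also correct (square-integrability alone would not suffice). A slightly shorter alternative, which avoids passing through $\bar\rho$, is the $H^{-1}$ contraction $\frac{d}{dt}\|\rho^1_t-\rho^2_t\|_{-1}^2=-2\langle\rho^1_t-\rho^2_t,\varphi(\rho^1_t)-\varphi(\rho^2_t)\rangle\le-2c\,\|\rho^1_t-\rho^2_t\|_2^2$, which yields the time integrability of $\|\rho^1_t-\rho^2_t\|_2^2$ directly from $\|\rho^1_0-\rho^2_0\|_{-1}<\infty$; but this requires the same chain-rule justification, so the two approaches are of comparable difficulty.
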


\begin{corollary}\label{lem02sld}
There is a nonnegative function $\Psi$ in $L^2(\bb R_+)$ such that for any profiles $\rho_0^1$, $\rho_0^2: \Omega \to [0,1]$, the weak solutions $\rho^j$, $j=1$, $2$ of \eqref{f02} with initial conditions $\rho_0^j$ satisfy
\begin{eqnarray*}
\Vert\rho_t^1 -\rho_t^2 \Vert_1  
\;\leq\; \Psi(t)\, ,
\end{eqnarray*}
for every $t\geq 0$.
\end{corollary}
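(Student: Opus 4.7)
The approach is to reduce the universal bound to a single comparison between the two ``extremal'' flows, and then let Lemma \ref{s02} do the $L^2$-integrability work for that one comparison.

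First I would let $\bar\rho^-$ and $\bar\rho^+$ denote the weak solutions of \eqref{f02} associated with the constant initial profiles $\rho_0 \equiv 0$ and $\rho_0 \equiv 1$, respectively. These are admissible (both take values in $[0,1]$) and are well-defined by the existence-and-uniqueness result cited after \eqref{f02}. For any profile $\rho_0 : \Omega \to [0,1]$ one has $0 \leq \rho_0 \leq 1$ almost everywhere, so two applications of the monotonicity Lemma \ref{lembis-ann} at $s=0$ sandwich the corresponding solution $\rho$ between the extremals:
$$
\bar\rho^-(t,\cdot) \ \leq\ \rho(t,\cdot) \ \leq\ \bar\rho^+(t,\cdot) \qquad \text{a.e.\ on }\Omega,
$$
for every $t \geq 0$.

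Applying this sandwich to any two solutions $\rho^1, \rho^2$ gives the pointwise a.e.\ estimate $|\rho^1_t - \rho^2_t| \leq \bar\rho^+_t - \bar\rho^-_t$, hence after integration over $\Omega$,
$$
\|\rho^1_t - \rho^2_t\|_1 \ \leq\ \|\bar\rho^+_t - \bar\rho^-_t\|_1 \ =:\ \Psi(t).
$$
By construction $\Psi$ depends only on the boundary data $b$ (through $\bar\rho^{\pm}$), not on $\rho_0^1$ or $\rho_0^2$, which is exactly the universality the statement demands. Finally I would invoke Lemma \ref{s02} for the specific pair $(\bar\rho^+, \bar\rho^-)$ to conclude
$$
\int_0^\infty \Psi(t)^2\, dt \ =\ \int_0^\infty \|\bar\rho^+_t - \bar\rho^-_t\|_1^2\, dt \ <\ \infty,
$$
so that $\Psi \in L^2(\mathbb{R}_+)$.

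There is no serious obstacle here: the only conceptual point is to notice that universality across all pairs of initial data must be obtained by comparing to extremal solutions, rather than by trying to control the right-hand side of Lemma \ref{s02} uniformly in $(\rho_0^1, \rho_0^2)$ directly. Once one uses the monotonicity lemma to reduce to the single pair $(\bar\rho^+, \bar\rho^-)$, the corollary becomes an immediate consequence of Lemma \ref{s02}.
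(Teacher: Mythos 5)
Your proposal is correct and is essentially the paper's own argument: the paper also takes $\Psi(t)=\|\rho^1_t-\rho^0_t\|_1$ for the extremal solutions started from the constant profiles $0$ and $1$, invokes Lemma \ref{s02} for the $L^2(\mathbb{R}_+)$ membership, and uses the monotonicity of Lemma \ref{lembis-ann} to sandwich an arbitrary solution between the extremals. You have merely spelled out the sandwiching step in more detail than the paper does.
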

\begin{proof}
Let $\rho^0$, resp. $\rho^1$, be the weak solution of the hydrodynamic equation \eqref{f02} with initial condition $\rho^0(0,\cdot)\equiv 0$, resp. $\rho^1(0,\cdot)\equiv 1$, and set $\Psi(t) = \Vert\rho^1_t - \rho^0_t\Vert_1$. By the previous lemma, $\Psi$ belongs to $L^2(\bb R_+)$. The last statement of the corollary follows from the monotonicity of weak solutions established in Lemma \ref{lembis-ann}.
\end{proof}

\subsection{Energy estimates}
\label{subsecenest}

We establish here an energy estimate for weak solutions in terms of the time $T$ and the $L^1$ distance between its initial profile and the stationary density $\bar\rho$.

We start by introducing some Sobolev Spaces. Let ${\mathcal C}_{c}^\infty (\Omega)$ be the set of infinitely
differentiable functions $G:\Omega \to \R$, with compact support in
$\Omega$. Recall from Subsection \ref{sechyds} the definition of the Sobolev
space $H^1(\Omega)$ and of the norm $\Vert\cdot\Vert_{1,2}$. Denote by
$H^1_0(\Omega)$ the closure of $C_c^{\infty}(\Omega)$ in
$H^1(\Omega)$. Since $\Omega$ is bounded, by Poincar\'e's inequality,
there exists a finite constant $C_1$ such that for all $G\in
H^1_0(\Omega)$
\begin{equation*}
\|G\|^2_2 \;\le\;  C_1 \| \partial_{u_1}G\|^2_2
\;\le\;  C_1 \sum_{j=1}^d \<\partial_{u_j} G \, , \, 
\partial_{u_j} G \>_2 \; .
\end{equation*}
This implies that, in $H^1_0 (\Omega)$
\begin{equation*}
\|G\|_{1,2,0} \;=\; \left\{ \sum_{j=1}^d
\<\partial_{u_j} G \, , \, \partial_{u_j} G \>_2  \right\}^{1/2}
\end{equation*}
is a norm equivalent to the norm $\|\cdot \|_{1,2}$.  Moreover, $H^1_0
(\Omega)$ is a Hilbert space with inner product given by
\begin{equation*}
\< G \, , \, J \>_{1,2,0}
\;=\; \sum_{j=1}^d
\<\partial_{u_j} G \, , \, \partial_{u_j} J \>_2 \; .
\end{equation*}

To assign boundary values along the boundary $\Gamma$ of $\Omega$ to
any function $G$ in $H^1(\Omega)$, recall, from the trace Theorem
(\cite{z}, Theorem 21.A.(e)), that there exists a continuous linear
operator $B:H^1(\Omega)\to L^2(\Gamma)$, called trace, such that $BG =
G\big|_{\Gamma}$ if $G\in H^1(\Omega)\cap \mc C(\overline{\Omega})$.
Moreover, the space $H^1_0(\Omega)$ is the space of functions $G$ in
$H^1(\Omega)$ with zero trace (\cite{z}, Appendix (48b)):
\begin{equation*}
H^1_0(\Omega) = \left\{G\in H^1(\Omega):\; BG = 0\right\}\,.
\end{equation*}

Since $\mc C^{\infty}(\overline{\Omega})$ is dense in $H^1(\Omega)$
(\cite{z}, Corollary 21.15.(a)), for functions $F,G$ in $H^1(\Omega)$,
the product $FG$ has generalized derivatives $\partial_{u_i} (FG) =
F\partial_{u_i} G+ G\partial_{u_i} F$ in $L^1(\Omega)$ and
\begin{equation}
\label{ibp}
\begin{split}
& \int_{\Omega}F(u)\, \partial_{u_1}G(u)\, du \; +\; 
\int_{\Omega}G(u) \, \partial_{u_1}F(u) \, du \\
& \quad =\;  \int_{\Gamma_+}
BF(u)\, BG(u)\, du \;-\; \int_{\Gamma_-} BF(u)\, BG(u)\, du\, .
\end{split}
\end{equation}
Moreover, if $G\in H^1(\Omega)$ and $f\in\mc C^1(\bb R)$ is such that
$f'$ is bounded then $f\circ G$ belongs to $H^1(\Omega)$ with
generalized derivatives $\partial_{u_i}(f\circ G) = (f'\circ
G)\partial_{u_i}G$ and trace $B(f\circ G) = f\circ(BG)$.

Finally, denote by $H^{-1}(\Omega)$ the dual of $H^1_0(\Omega)$.
$H^{-1}(\Omega)$ is a Banach space with norm $\Vert\cdot\Vert_{-1}$
given by
\begin{equation*}
\Vert v\Vert^2_{-1} = \sup_{G\in\mc C^{\infty}_c(\Omega)}
\left\{2\langle v,G\rangle_{-1,1} -
\int_{\Omega} \Vert \nabla G(u)\Vert^2du \right\}\, , 
\end{equation*}
where $\langle v,G\rangle_{-1,1}$ stands for the values of the linear
form $v$ at $G$.

Fix $T>0$ and a weak solution $\rho$ of \eqref{f02} with initial profile $\rho_0:\Omega\to [0,1]$. It is not too hard to prove that
\begin{eqnarray}\label{wsipb}
 \rho(0,u) = \rho_0(u) \;\;\hbox{ a.s. } \hbox{in } \Omega \;\;\;\;\hbox{ and }\;\;\; B\rho_t = b\;\; \hbox{ a.s. } \hbox{in } [0,T]\, .
\end{eqnarray}
In fact it is a straigthforward consequence of Lemma 4.1 and Corollary 4.3 in \cite{FLM}. Hence, by the integration by parts formula \eqref{ibp} and since $\rho$ is a weak solution of \eqref{f02}, for any function $G$ in $\mc C^{1,2}_0(\overline{\Omega_T})$,
\begin{eqnarray*}
\<\rho_T, G_T\> - \<\rho_0,G_0\> - \int_0^T\!\!\<\rho_t,\partial_t G_t\>\; dt = - \int_0^T\!\!\! dt\!\int_{\Omega} \!du\; \nabla\varphi(\rho_t(u))\cdot\nabla G_t(u)\, .
\end{eqnarray*}
From this and by Schwarz inequality, the functional $\partial_t \rho : C^{\infty}_c(\Omega_T) \to \bb R$ defined by
\begin{equation*}
\partial_t \rho (H) \;=\; - \int_0^T \< \rho_t, \partial_t H_t\>\, dt
\end{equation*}
satisfies
\begin{equation}\label{wsprop02}
\partial_t \rho (H) \;=\;- \int_0^T dt\int_{\Omega} du\; \nabla\varphi(\rho_t(u))\cdot\nabla H_t(u)
\end{equation}
and
\begin{equation*}
|\partial_t \rho (H)| \;\leq\; \left\{\int_0^T d s \, \left( \int_\Omega {\parallel\nabla \rho(s,u)\parallel}^2 
du \right)\right\}^{1/2}  \Vert H\Vert_{L^2([0,T], H^1_0(\Omega))}\, ,
\end{equation*}
for all $H$ in $C^{\infty}_c(\Omega_T)$. In particular, it can be extended to a bounded linear operator $\partial_t \rho :
L^2([0,T],H_0^1(\Omega)) \to \bb R$ which, by Proposition 23.7 in \cite{z} and by \eqref{wsprop02}, corresponds to the path $\{\partial_t\rho_t: 0\leq t\leq T\}$ in $L^2([0,T], H^{-1}(\Omega))$ with $\partial_t\rho_t: H^1_0(\Omega)\to \bb R$, $0\leq t\leq T$,  given by
\begin{eqnarray}\label{wsprop}
\<\partial_t\rho_t,G\>_{-1,1} = \int_{\Omega}\nabla\varphi(\rho_t(u))\cdot\nabla G_t(u) du\,.
\end{eqnarray}

For each weak solution $\rho$ of \eqref{f02}, let
\begin{eqnarray*}
\mc E_T(\rho) = \int_0^T dt\int_{\Omega} du\; \frac{\Vert\nabla\rho_t(u)\Vert^2}{\chi(\rho_t(u))} <\infty.
\end{eqnarray*}

\begin{lemma}\label{lem2}
There exists a positive constant $C$ such that for any $T>0$ and any weak solution $\rho$ of \eqref{f02} with initial profile $\rho_0:\Omega\to [0,1]$,
\begin{eqnarray*}
\mc E_T(\rho) \leq C\left\{T + \Vert\rho_0-\bar\rho\Vert_1\right\}\, .
\end{eqnarray*}

\end{lemma}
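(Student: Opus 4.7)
The plan is a relative entropy argument with reference density $\bar\rho$. Let $H(r)=r\log r+(1-r)\log(1-r)$, which satisfies $H''(r)=1/\chi(r)$ with $\chi(r)=r(1-r)$, and define the (non-negative by convexity) relative entropy
\[
S(\rho|\bar\rho)\;=\;\int_\Omega\bigl[H(\rho)-H(\bar\rho)-H'(\bar\rho)(\rho-\bar\rho)\bigr]\,du.
\]
Since $b:\Gamma\to(0,1)$ is continuous on the compact set $\Gamma$, there is $\delta_0>0$ with $b(\Gamma)\subset[\delta_0,1-\delta_0]$; applying Corollary~\ref{lem05sld} to the stationary trajectory $\rho_t\equiv\bar\rho$ yields $\epsilon>0$ with $\epsilon\le\bar\rho\le 1-\epsilon$, hence $\chi(\bar\rho)\ge\chi_0:=\epsilon(1-\epsilon)>0$; in particular $H'(\bar\rho)$ is bounded and $\int_\Omega\|\nabla\bar\rho\|^2du<\infty$.

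The central step is the entropy identity
\[
S(\rho_T|\bar\rho)-S(\rho_0|\bar\rho)\;=\;-\int_0^T\!\!\int_\Omega\nabla[H'(\rho_t)-H'(\bar\rho)]\cdot\nabla\varphi(\rho_t)\,du\,dt,
\]
obtained formally by using $H'(\rho_t)-H'(\bar\rho)$ as test function in \eqref{wsprop}, noting that it vanishes on $\Gamma$ since $\rho_t=\bar\rho=b$ there. Expanding the gradient, the right hand side equals
\[
-\int_0^T\!\!\int_\Omega\frac{\varphi'(\rho_t)\,\|\nabla\rho_t\|^2}{\chi(\rho_t)}\,du\,dt\;+\;\int_0^T\!\!\int_\Omega\frac{\varphi'(\rho_t)}{\chi(\bar\rho)}\,\nabla\bar\rho\cdot\nabla\rho_t\,du\,dt.
\]
Since $\varphi'(r)=1+2ar\ge\alpha:=\min(1,1+2a)>0$ on $[0,1]$, the first integral is at most $-\alpha\,\mc E_T(\rho)$. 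For the cross term, writing $\|\nabla\bar\rho\|\|\nabla\rho_t\|$ as $(\|\nabla\rho_t\|/\sqrt{\chi(\rho_t)})\cdot(\|\nabla\bar\rho\|\sqrt{\chi(\rho_t)})$ and applying Cauchy--Schwarz followed by Young's inequality---using that $\varphi'$ is bounded and $\chi(\bar\rho)\ge\chi_0$---one absorbs $\tfrac{\alpha}{2}\mc E_T(\rho)$ and is left with a remainder bounded by $C_BT$, where $C_B$ depends only on $a,\chi_0$ and $\int\|\nabla\bar\rho\|^2du$. Using $S(\rho_T|\bar\rho)\ge 0$ and rearranging then gives $\mc E_T(\rho)\le C_1\bigl(T+S(\rho_0|\bar\rho)\bigr)$.

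To control $S(\rho_0|\bar\rho)$ I would use the elementary pointwise inequality (a consequence of $\log x\le x-1$)
\[
H(r)-H(s)-H'(s)(r-s)\;\le\;\frac{(r-s)^2}{\chi(s)},\qquad r\in[0,1],\;s\in(0,1),
\]
which expresses the domination of Bernoulli KL by $\chi^2$-divergence. Applying it pointwise with $r=\rho_0(u)$, $s=\bar\rho(u)$, and using $\chi(\bar\rho)\ge\chi_0$ together with $|\rho_0-\bar\rho|\le 1$, yields $S(\rho_0|\bar\rho)\le\chi_0^{-1}\|\rho_0-\bar\rho\|_2^2\le\chi_0^{-1}\|\rho_0-\bar\rho\|_1$, and combining with the previous estimate gives the lemma.

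The main technical obstacle is to justify the entropy identity for weak solutions, since $H'$ blows up near $0$ and $1$ and therefore $H'(\rho_t)-H'(\bar\rho)$ is not a priori in $L^2([0,T],H_0^1(\Omega))$, the natural space of test functions provided by \eqref{wsprop}. I would approximate $H$ by a sequence of $C^2$ convex functions $H_n$ whose derivatives $H_n'$ are bounded and Lipschitz, agreeing with $H'$ on $[1/n,1-1/n]$ and affine outside, so that $H_n''\le 1/\chi$ pointwise and $H_n''\nearrow 1/\chi$. Once $1/n<\delta_0$, the trace of $H_n'(\rho_t)-H_n'(\bar\rho)$ on $\Gamma$ vanishes (both densities take boundary values in $[\delta_0,1-\delta_0]$, where $H_n'=H'$), making it a legitimate test function in $L^2([0,T],H_0^1(\Omega))$. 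The Bochner chain rule (\cite{z}, Prop.~23.7) then yields the identity with $H$ replaced by $H_n$; running the above estimates uniformly in $n$ and applying monotone convergence to $H_n''(\rho_t)\|\nabla\rho_t\|^2$ recovers the stated bound on $\mc E_T(\rho)$.
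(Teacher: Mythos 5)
Your proof is correct, and its core is the same computation as the paper's: a relative-entropy identity obtained by pairing $\partial_t\rho_t\in L^2([0,T],H^{-1}(\Omega))$ with the gradient of an entropy functional via \eqref{wsprop}, followed by absorption of the cross term with Young's inequality. You differ in two genuine ways. First, the paper takes as reference an arbitrary $\mc C^2$ extension $\beta$ of $b$, so the entropy increment $\int h(\rho_T,\beta)-\int h(\rho_\delta,\beta)$ cannot simply be dropped and must be bounded by $C\Vert\rho_0-\bar\rho\Vert_1$ through a separate argument (splitting $\Omega$ according to whether $\rho_\delta,\rho_T$ are close to $\bar\rho$, using boundedness and local Lipschitz continuity of $r\log r$, and invoking the $L^1$-contraction of Lemma \ref{lem1-ann}). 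By taking $\bar\rho$ itself as reference --- legitimate since $\bar\rho\in H^1(\Omega)$ and is bounded away from $0$ and $1$ --- you get $S(\rho_T|\bar\rho)\ge 0$ for free and control $S(\rho_0|\bar\rho)$ directly by the pointwise KL-versus-$\chi^2$ inequality, bypassing Lemma \ref{lem1-ann} entirely; note the cross term only needs $\nabla\bar\rho\in L^2(\Omega)$, which is part of the definition of a weak solution of \eqref{f01}. Second, the paper tames the singularity of $H'$ at $0$ and $1$ by working on $[\delta,T]$, where Corollary \ref{lem05sld} confines $\rho_t$ to $[\epsilon,1-\epsilon]$, and letting $\delta\downarrow 0$ at the end; you instead truncate the entropy ($H_n$ with bounded Lipschitz derivative, $H_n''\le H''$, $H_n''\nearrow 1/\chi$) and conclude by monotone convergence. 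This works: for $1/n<\delta_0$ the test function $H_n'(\rho_t)-H_n'(\bar\rho)$ has zero trace, and the Bregman comparison $H_n(r)-H_n(s)-H_n'(s)(r-s)\le H(r)-H(s)-H'(s)(r-s)$, which follows from $H_n''\le H''$, keeps the regularized initial entropy bounded by $\chi_0^{-1}\Vert\rho_0-\bar\rho\Vert_1$ uniformly in $n$. The only quibble is a citation: the chain rule you invoke is justified by the smooth-approximation argument based on Proposition 23.23 of \cite{z} (exactly the route the paper takes for \eqref{ibp0}), not Proposition 23.7.
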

\begin{proof}
Fix $T>\delta>0$, a weak solution $\rho$ of \eqref{f02} and a function $\beta:\overline{\Omega}\to(0,1)$ of class $\mc C^2$ such that $\beta\big|_{\Gamma} = b$. Let $\epsilon>0$ such that
$$
1-\epsilon\,\leq\; \beta\;,\; \rho_t\;\leq \;\epsilon \qquad\hbox{ for every } t\geq \delta\, .
$$

Let $h:[\epsilon,1-\epsilon]^2\to \bb R$ be the smooth function given by
$$
h(x,y) = x\log\left(\frac{x}{y}\right)+(1-x)\log\left(\frac{1-x}{1-y}\right)\, .
$$
Recall that $\partial_t\rho$ belongs to
$L^2([0,T],H^{-1}(\Omega))$. We claim that
\begin{eqnarray}\label{ibp0}
\begin{aligned}
\int_{\delta}^T\langle\partial_t\rho_t,\partial_xh
(\rho_t,\beta)\rangle_{-1,1}\;dt
\;= & \;\int_{\Omega}h(\rho_{_T}(u),\beta(u))\,du
\\
& \;- \int_{\Omega}h(\rho_{\delta}(u),\beta(u))\,du\, .
\end{aligned}
\end{eqnarray}

Indeed, By \eqref{wsipb}, $\rho-\beta$ belongs to
$L^2\left([0,T],H^1_0(\Omega)\right)$ and $\partial_t(\rho-\beta) =
\partial_t\rho$ belongs to $L^2([0,T],H^{-1}(\Omega))$. Then, there
exists a sequence $\{\widetilde G^n: \,n\geq 1\}$ of smooth functions
$\widetilde G^n:\overline{\Omega_T}\to \bb R$ such that $\widetilde
G^n_t$ belongs to $\mc C^{\infty}_c(\Omega)$ for every $t$ in $[0,T]$,
$\widetilde G^n$ converges to $\rho-\beta$ in
$L^2([0,T],H^1_0(\Omega))$ and $\partial_t \widetilde G^n$ converges
to $\partial_t(\rho-\beta)$ in $L^2([0,T],H^{-1}(\Omega))$ (cf.
\cite{z}, Proposition 23.23(ii)). For each positive integer $n$, let
$G^n = \widetilde G^n+\beta$. Fix a smooth
function $\tilde h : \bb R^2\to\bb R$ with compact support
and such that its restriction to $[\epsilon,1-\epsilon]^2$ is $h$. It is
clear that
\begin{eqnarray}\label{ibpn}
\begin{aligned}
\int_{\delta}^T\langle\partial_tG^n_t, \partial_x\tilde
h(G^n_t,\beta)\rangle\;dt \;= & \;\int_{\Omega}\tilde
h(G^n_T(u),\beta(u))\,du \\
 & \;- \int_{\Omega}\tilde h(G^n_{\delta}(u),\beta(u))\,du\, .
\end{aligned}
\end{eqnarray}

On the one hand, $\partial_xh:[\epsilon,1-\epsilon]^2\to\bb R$ is given by
\begin{equation*}
\partial_xh(x,y) =
\log\left(\frac{x}{1-x}\right) -
\log\left(\frac{y}{1-y}\right)\, . 
\end{equation*}
Hence, $\partial_xh(\rho,\beta)$ and $\partial_x\tilde
h(G^n,\beta)$ belongs to
$L^2\left([\delta,T],H^1_0(\Omega)\right)$. Moreover, since $\partial_x
\tilde h$ is smooth with compact support and $G^n$ converges
to $\rho$ in $L^2([0,T],H^1(\Omega))$, $\partial_x\tilde
h(G^n,\beta)$ converges to $\partial_xh(\rho,\beta)$
in $L^2([\delta,T],H^1_0(\Omega))$. From this fact and since
$\partial_tG^n$ converges to $\partial_t\rho$ in
$L^2([0,T],H^{-1}(\Omega))$, if we let $n\to\infty$, the left hand
side in \eqref{ibpn} converges to
\begin{equation*}
\int_{\delta}^T\langle\partial_t\rho_t,
\partial_xh(\rho_t,\beta)\rangle_{-1,1}\;dt\, . 
\end{equation*}

On the other hand, by Proposition 23.23(ii) in \cite{z}, $G^n_{\delta}$,
resp. $G^n_T$, converges to $\rho_{\delta}$, resp. $\rho_T$, in $L^2(\Omega)$.
Then, if we let $n\to\infty$, the right hand side in \eqref{ibpn} goes
to
\begin{equation*}
\int_{\Omega}h(\rho_{_T}(u),\beta(u))du -
\int_{\Omega}h(\rho_{\delta}(u),\beta(u))du\, , 
\end{equation*}
which proves claim \eqref{ibp0}.

Let $F,U:[\delta,T]\times\overline\Omega\to\bb R$ be the functions given by
$F(t,u) = h(\rho(t,u),\beta(u))$ and $U(t,u) = \partial_x h(\rho(t,u),\beta(u))$.

By \eqref{ibp0} and \eqref{wsprop},
\begin{equation}\label{eq2}
\begin{aligned}
\int_{\Omega}[F(T,u)-F(\delta,u)]du \; = \;&  -\int_{\delta}^T\!\!dt\int_{\Omega}du\,\nabla\varphi(\rho_t(u))\cdot\nabla U_t(u)
\\  =\; & \int_{\delta}^T\!\!dt\int_{\Omega}du\,\frac{\varphi'(\rho_t(u))}{\chi(\beta(u))}\nabla\beta(u)\cdot\nabla\rho_t(u)
\\
& \; - \int_{\delta}^Tdt\int_{\Omega}du\,\varphi'(\rho_t(u))\frac{\Vert\nabla\rho_t(u)\Vert^2}{\chi(\rho_t(u))}\, .
\end{aligned}
\end{equation}

Let 
$$
\mc E_{[\delta,T]}(\rho)=\int_{\delta}^Tdt\int_{\Omega}du\;\frac{\Vert\nabla\rho_t(u)\Vert^2}{\chi(\rho_t(u))}\, .
$$

Since $\varphi'$ is bounded bellow on $[0,1]$ by some positive constant $C_1$, by \eqref{eq2} and the elementary inequality $2ab\leq A^{-1}a^2+Ab^2$,
\begin{eqnarray*}
2\mc E_{[\delta,T]}(\lambda) & \leq & \frac{2}{C_1}\int_{\delta}^Tdt\int_{\Omega}du\,\varphi'(\rho_t(u))\frac{\Vert\nabla\rho_t(u)\Vert^2}{\chi(\rho_t(u))}
\\ & \leq & \mc E_{[\delta,T]}(\rho) + \frac{1}{C_1^2}\int_{\delta}^Tdt\int_{\Omega}du\,\frac{\varphi'(\rho_t(u))^2\chi(\rho_t(u))}{\chi(\beta(u))^2}\Vert\nabla\beta(u)\Vert^2
\\ & & + \frac{2}{C_1}\Vert F_T-F_{\delta}\Vert_1\, .
\end{eqnarray*}
Therefore, since $\varphi'$, $\chi$ are bounded above on $[0,1]$ by some positive constant and since $\beta$ is a function in $\mc C^2(\overline\Omega)$ bounded away from $0$ and $1$, there exists a constant $C_2=C_2(\beta)$ such that
\begin{eqnarray*}
\mc E_{[\delta,T]}(\rho)\leq C_2(T-\delta)+\frac{2}{C_1}\Vert F_T-F_{\delta}\Vert_1 \, .
\end{eqnarray*}
Thus, in order to conclude the proof, we just need to show that there is a constant $C'>0$ such that
\begin{eqnarray}\label{est1sld}
\Vert F_T-F_{\delta}\Vert_1\leq C'\Vert\rho_0-\bar\rho\Vert_1\, ,
\end{eqnarray}
and then let $\delta\downarrow 0$. From the definition of $F$ and since $\beta$ is bounded away from 0 and 1 it is easy to see that $\Vert F_T-F_{\delta}\Vert_1$ is bounded above by
\begin{eqnarray}\label{exp1}
\begin{aligned}
&\int_{\Omega} \left\{|f(\rho_T(u))-f(\rho_{\delta}(u))| +|f(1-\rho_T(u))-f(1-\rho_{\delta}(u))|\right\}du
\\
&+C_3\Vert\rho_T-\rho_{\delta}\Vert_1\, ,
\end{aligned}
\end{eqnarray}
where $f(r) = r\log r$ and $C_3 = C_3(\beta)$ is a positive constant.

Fix $\delta_0>0$ such that $2\delta_0\leq\bar\rho(u)\leq 1-2\delta_0$ for all $u$ in $\Omega$. Let $A_{\delta}$ be the subset of $\Omega$ defined by
$$
A_{\delta} = \{u\in\Omega: \,|\rho_{\delta}(u)-\bar\rho(u)|>\delta_0 \;\hbox{ or }\; |\rho_T(u)-\bar\rho(u)|>\delta_0\} \, .
$$
Decompose the integral term in \eqref{exp1} as the sum of two integral terms $\int_{A_{\delta}} +\int_{A_{\delta}^c}$.

On the one hand, it is clear that $m(A_{\delta})\leq\delta_0^{-1}(\Vert\rho_{\delta}-\bar\rho\Vert_1+\Vert\rho_T-\bar\rho\Vert_1)$ and then, since $-e^{-1}\leq f(r)\leq 0$ for all $r\in(0,1]$, the first integral term is bounded above by
$$
\frac{2}{e\delta_0}\left\{\Vert\rho_{\delta}-\bar\rho\Vert_1+\Vert\rho_T-\bar\rho\Vert_1\right\}\, .
$$

On the other hand, $A_{\delta}^c\subset \{u\in\Omega : \,\delta_0\leq\rho_{\delta}(u), \rho_T(u)\leq 1-\delta_0 \}$ and there exists a constant $C_4=C_4(\delta_0)>0$ such that $|f(r)-f(s)|\leq C_4|r-s|$ for all $r,s\in[\delta_0,1]$. Hence, the second integral term is bounded by
$$
2C_4\Vert\rho_T-\rho_{\delta}\Vert_1\, .
$$

These bounds together with \eqref{exp1} and Lemma \ref{lem1-ann} prove \eqref{est1sld} and we are done.

\end{proof}


\section{The Rate Functions}


We discuss in this section some results concerning the dynamical and the statical rate functions. The properties for weak solutions of the  hydrodynamic equation \eqref{f02} established in the previous section play a fundamental role in the derivation of many of these results. Here and throughout the rest of this article we denote by $\overline\vartheta$ the measure in $\mc M^0$ with density given by the stationary profile $\bar\rho$, i.e., $\overline\vartheta(du) = \bar\rho(u)du$.

\subsection{The functional $I_T$}
\label{secdrf}
Here we study some properties of the functional $I_T$, which is closely related to the dynamical rate function $I_T(\cdot|\gamma)$. We review first the dynamical large deviation principle established in \cite{FLM}.

\begin{theorem}\label{dldp}
Fix a measurable function $\gamma:\Omega\to[0,1]$ and a sequence of configurations $\{\eta^N :\; N\geq 1\}$ with $\eta^N$ in $X_N$ such that the sequence of measures $\pi^N(\eta^N)$ converges to $\gamma(u)du$ in $\mc M$. Then the measure ${\text{\bf Q}}_{\eta^N}=\bb P_{\eta^N}\circ(\pi^N)^{-1}$ on $D([0,T],\mc M)$ satisfies a large deviation principle with speed $N^d$ and lower semicontinuous rate function $I_T(\cdot|\gamma)$. Namely, for each closed set $\mc C\subset D([0,T],\mc M)$ and for each open set $\mc{O}\subset D([0,T],\mc M)$,
\begin{eqnarray*}
\limsup_{N\to\infty}\frac{1}{N^d}\log {\text{\bf Q}}_{\eta^N}(\mc C)\leq - \inf_{\pi\in\mc C} I_T(\pi|\gamma)\, ,
\end{eqnarray*}
\begin{eqnarray*}
\liminf_{N\to\infty}\frac{1}{N^d}\log {\text{\bf Q}}_{\eta^N}(\mc{O})\geq - \inf_{\pi\in\mc{O}} I_T(\pi|\gamma)\, .
\end{eqnarray*}
Moreover, the functional $I_T(\cdot|\gamma)$ have compact level sets.
\end{theorem}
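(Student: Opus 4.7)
The plan is to establish the large deviation principle by the standard exponential-martingale/variational method as developed for interacting particle systems (Kipnis--Olla--Varadhan, Landim--Yau and subsequent works), adapted to accommodate the boundary dynamics $\mc L_{N,b}$. The core object is, for every $G\in\mc C^{1,2}_0(\overline{\Omega_T})$, the exponential martingale
\begin{equation*}
M^{N,G}_t \;=\; \exp\Bigl\{N^d\langle\pi^N_t,G_t\rangle - N^d\langle\pi^N_0,G_0\rangle - \int_0^t e^{-N^d\langle\pi^N_s,G_s\rangle}(\partial_s+\mc L_N)e^{N^d\langle\pi^N_s,G_s\rangle}\,ds\Bigr\}.
\end{equation*}
Expanding the compensator and separating the bulk contribution (involving the rates $r_{x,x+e_i}$) from the boundary contribution (involving $C^b_x$), then invoking the superexponential one- and two-block replacement lemmas to replace local occupation functions by smooth functions of the empirical density, one checks that $N^{-d}\log M^{N,G}_T$ is asymptotically equivalent to $J_G(\pi^N)$. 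The boundary condition $G|_{[0,T]\times\Gamma}=0$ is precisely what makes the boundary flip rates collapse into the $\int_\Gamma \varphi(b)\,\partial_{u_1}G\,dS$ terms appearing in $J_G$.

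For the upper bound on a compact set $\mc K\subset D([0,T],\mc M)$, the identity $\bb E_{\eta^N}[M^{N,G}_T]=1$ together with a standard minimax argument yields
\begin{equation*}
\limsup_{N\to\infty}\frac{1}{N^d}\log{\text{\bf Q}}_{\eta^N}(\mc K) \;\le\; -\inf_{\pi\in\mc K}\sup_{G\in\mc C^{1,2}_0(\overline{\Omega_T})}J_G(\pi).
\end{equation*}
To promote this to arbitrary closed sets one needs exponential tightness, which in turn rests on two ingredients: an energy estimate showing that $\mc Q_T(\pi^N)$ is exponentially bounded (so the $+\infty$ branch of $I_T$ is not seen), and an equicontinuity bound on $t\mapsto\langle\pi^N_t,G\rangle$ for $G$ ranging over a countable dense subset of $\mc C^\infty_c(\Omega)$, obtained from a Dirichlet-form estimate together with the Garsia--Rodemich--Rumsey inequality.

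For the lower bound one first establishes the usual density statement: every $\pi$ with $I_T(\pi|\gamma)<\infty$ is the $I_T$-limit of trajectories $\pi^H$ that are smooth perturbations of the hydrodynamic equation, i.e. weak solutions of
\begin{equation*}
\partial_t\rho \;=\; \Delta\varphi(\rho)\;-\;\nabla\cdot\bigl(\sigma(\rho)\nabla H\bigr)\, ,\qquad \rho|_\Gamma=b,
\end{equation*}
with $H\in\mc C^{1,2}_0(\overline{\Omega_T})$. For any such $H$, tilting the Markov process by $M^{N,H}_T$ produces a law under which the empirical measure concentrates on $\pi^H$, and a direct relative-entropy computation gives a Radon--Nikodym density at $\pi^H$ equal to $\exp\{-N^d I_T(\pi^H|\gamma)+o(N^d)\}$; this gives the lower bound on open sets. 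Compactness of the level sets of $I_T(\cdot|\gamma)$ then follows because a uniform bound $I_T(\pi|\gamma)\le C$ forces $\pi_0(du)=\gamma(u)du$ (closed condition), $\mc Q_T(\pi)\le C'$ (a ball in $L^2([0,T],H^1(\Omega))$), and, by the $H^{-1}$-norm bound on $\partial_t\rho$ derived just as in \eqref{wsprop}, enough time regularity to apply the Aubin--Lions lemma.

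The main obstacle is the density step in the lower bound, which is genuinely harder here than on the torus because the perturbing field $H$ must vanish on $\Gamma$ while the target trajectory $\pi$ can a priori approach the singular values $0$ or $1$ of $\sigma$. The standard remedy is a multi-step regularization: first replace $\pi$ by the convex combination $(1-\varepsilon)\pi+\varepsilon\overline\vartheta$, whose density is uniformly bounded away from $\{0,1\}$ and whose cost is controlled by convexity of $I_T$; then mollify in space-time against a boundary-compatible kernel so that the resulting trajectory is smooth with the correct boundary trace $b$; finally invoke Corollary \ref{lem05sld} and the energy estimate of Lemma \ref{lem2} to extract $H=\sigma(\rho)^{-1}(\Delta\varphi(\rho)-\partial_t\rho)$ (in a weak sense) lying in the right Sobolev space with $H|_\Gamma=0$, and verify that the approximation converges with its cost. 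Sending $\varepsilon\downarrow 0$ completes the argument.
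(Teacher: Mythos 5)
The paper does not actually prove this theorem: it is quoted verbatim from \cite{FLM} (``We review first the dynamical large deviation principle established in \cite{FLM}''), so there is no internal argument to measure your proposal against. That said, your outline is the standard Kipnis--Olla--Varadhan-type strategy that \cite{FLM} follows for this boundary driven model: exponential martingales whose compensators reduce, after superexponential replacement, to $J_G(\pi^N)$; a minimax argument plus exponential tightness and an energy estimate for the upper bound (the energy bound being needed to see the $+\infty$ branch of $I_T$, not for tightness itself, since $\mc M$ is already compact); and, for the lower bound, tilting by $M^{N,H}_T$ together with the $I_T$-density of trajectories obtained as weak solutions of the perturbed equation. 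You also correctly identify the density step as the real difficulty. So as a roadmap the proposal is sound and matches the cited proof.

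One concrete soft spot: you control the cost of the interpolation $(1-\varepsilon)\pi+\varepsilon\overline\vartheta$ ``by convexity of $I_T$''. For the SSEP ($a=0$) this works, since $\varphi$ is affine and $\sigma$ is concave, so each $J_G$ is convex in $\rho$ and hence so is $I_T$. But the model here has general $a>-\tfrac12$: $\varphi(r)=r(1+ar)$ is genuinely quadratic and $\sigma$ is cubic, and since $\Delta G$ has no definite sign the term $-\int\langle\varphi(\rho_t),\Delta G_t\rangle\,dt$ is neither convex nor concave in $\rho$; convexity of $I_T$ is therefore not available. In \cite{FLM} (and in the analogous arguments of Bertini et al.\ and Quastel--Yau type proofs for nonlinear $\varphi$) the cost of each approximation step is instead controlled by direct estimates on $J_G$ of the approximants, using lower semicontinuity of $I_T$ and uniform bounds of the type in Lemma \ref{lem2} and Corollary \ref{lem05sld}. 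As written, that step of your lower bound would not go through for $a\neq 0$ and needs to be replaced by the direct comparison argument.
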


It is easy to see that the results presented in \cite{FLM}, Section 4, which holds for the dynamical rate function $I_T(\cdot|\gamma)$, also holds for the functional $I_T$. We review some of them with the functional $I_T$ in the place of $I_T(\cdot|\gamma)$.

We start with Lemma 4.1 in \cite{FLM}. It is well known that a trajectory $\pi_t(du)=\rho(t,u)du$ in $D([0,T],\mc M^0)$ has finite energy, $\mc Q_T(\pi)<\infty$, if and only if its density $\rho$ belongs to $L^2([0,T],H^1(\Omega))$, in which case,
\begin{equation*}
\mc Q_T(\pi)=
\int_0^Tdt\int_{\Omega}du\;\Vert\nabla\rho_t(u)\Vert^2 < \infty\, .
\end{equation*}
Thus, if a path $\pi_t(du)=\rho(t,u)du$ in $D([0,T],\mc M^0)$ has finite energy $\mc Q_T(\pi)<\infty$ then $\rho_t$ belongs to $H^1(\Omega)$ for almost all $0\leq t\leq T$ and so $B(\rho_t)$ is well defined for those $t$.

\begin{lemma}
\label{lem01}
Let $\pi$ be a trajectory in $D([0,T],\mc M)$ such that
$I_T(\pi)<\infty$. Then $\pi_t(du) = \rho(t,u)du$ belongs to $\mc C([0,T],\mc M^0)$ and $B(\rho_t) = b$ for almost all $t$ in $[0,T]$.
\end{lemma}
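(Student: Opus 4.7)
The plan is to exploit the variational definition $I_T(\pi)=\sup_G J_G(\pi)$ in two regimes: test functions concentrated near $\Gamma$ to identify the boundary trace $B\rho_t$ with $b$, and test functions localized in time to rule out jumps of $\tau\mapsto\pi_\tau$.

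The starting observation is that $I_T(\pi)<\infty$ forces $J_G(\pi)<\infty$ for every $G\in\mc C^{1,2}_0(\overline{\Omega_T})$, hence $\pi\in D([0,T],\mc M^0)$ by the very definition of $J_G$; moreover $\mc Q_T(\pi)<\infty$, so $\rho\in L^2([0,T],H^1(\Omega))$ and the trace $B\rho_t\in L^2(\Gamma)$ is well defined for a.e.\ $t$. Writing $L_G(\pi)$ for the affine-in-$G$ part of $\hat J_G(\pi)$ and $Q_G(\pi)=\int_0^T\<\sigma(\rho_t),\Vert\nabla G_t\Vert^2\>\,dt$, the inequality $\lambda L_G(\pi)-(\lambda^2/2)Q_G(\pi)=J_{\lambda G}(\pi)\leq I_T(\pi)$, valid for every $\lambda\in\bb R$, yields the fundamental bound
\begin{equation*}
|L_G(\pi)|^2 \;\leq\; 2\,I_T(\pi)\,Q_G(\pi),\qquad G\in\mc C^{1,2}_0(\overline{\Omega_T}).
\end{equation*}
Since $G_t$ vanishes on $\Gamma$ and $\rho_t\in H^1(\Omega)$ a.e., an integration by parts transforms $-\<\varphi(\rho_t),\Delta G_t\>$ into $\int_\Omega\nabla\varphi(\rho_t)\cdot\nabla G_t\,du-\int_\Gamma\varphi(B\rho_t)\mathbf n_1\partial_{u_1}G_t\,dS$, so $L_G(\pi)$ splits as a bulk-plus-time-boundary piece $R_G(\pi)$ plus the boundary discrepancy $\int_0^T dt\int_\Gamma[\varphi(b)-\varphi(B\rho_t)]\mathbf n_1\partial_{u_1}G_t\,dS$.

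For the trace identification I would test against $G_\e(t,u)=\phi(t)\psi_\e(u_1)\beta(u')$ with $\phi\in\mc C^1([0,T])$, $\beta\in\mc C^\infty(\bb T^{d-1})$, and $\psi_\e\in\mc C^2([-1,1])$ supported in $[-1,-1+\e]$ with $\psi_\e(\pm 1)=0$ and $\psi_\e'(-1)=1$; a concrete choice is $\psi_\e(u_1)=(u_1+1)\chi((u_1+1)/\e)$ with $\chi\in\mc C_c^\infty([0,1))$, $\chi(0)=1$, for which $\|\psi_\e\|_\infty+\|\psi_\e\|_2+\|\psi_\e'\|_2\to 0$ as $\e\downarrow 0$ while $\psi_\e'(-1)=1$ is preserved. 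Then $Q_{G_\e}(\pi)\to 0$ and each piece of $R_{G_\e}(\pi)$ also tends to zero (the time-boundary terms and $\int\rho\,\partial_tG_\e$ because $\psi_\e\to 0$ in $L^\infty$; the bulk term by Cauchy--Schwarz, using $\nabla\rho\in L^2$ and $\|\nabla(\psi_\e\beta)\|_2\to 0$), while the boundary integral equals the $\e$-independent value $-\int_0^T\phi(t)\int_{\bb T^{d-1}}[\varphi(b)-\varphi(B\rho_t)]\big|_{u_1=-1}\beta(u')\,du'\,dt$. The fundamental bound forces this value to vanish; varying $\phi,\beta$ gives $\varphi(B\rho_t)=\varphi(b)$ a.e.\ on $[0,T]\times\Gamma^-$, and the strict monotonicity of $\varphi(r)=r(1+ar)$ on $[0,1]$ (which uses $a>-1/2$) yields $B\rho_t=b$ there. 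A symmetric choice near $\{u_1=+1\}$ handles $\Gamma^+$.

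For continuity, fix $g\in\mc C_c^\infty(\Omega)$ and $0\leq s<t\leq T$ and test against $G^\delta(\tau,u)=\chi_\delta(\tau)g(u)$, where $\chi_\delta$ smoothly approximates $\1_{[s,t]}$ with support in $(s-\delta,t+\delta)\subset[0,T]$. No $\Gamma$-boundary terms appear since $g$ has compact support in $\Omega$; letting $\delta\downarrow 0$ and using the càdlàg property to identify $\delta^{-1}\int_{s-\delta}^{s}\<\pi_\tau,g\>\,d\tau\to\<\pi_{s^-},g\>$ and $\delta^{-1}\int_{t}^{t+\delta}\<\pi_\tau,g\>\,d\tau\to\<\pi_t,g\>$, the fundamental bound becomes
\begin{equation*}
\Bigl|\<\pi_t,g\>-\<\pi_{s^-},g\>-\int_s^t\<\varphi(\rho_\tau),\Delta g\>\,d\tau\Bigr|^2 \;\leq\; 2I_T(\pi)\!\int_s^t\<\sigma(\rho_\tau),\Vert\nabla g\Vert^2\>\,d\tau\,.
\end{equation*}
Taking $s\uparrow t$ kills the integrals on both sides and yields $\<\pi_{t^-},g\>=\<\pi_t,g\>$; since $\mc C_c^\infty(\Omega)$ is convergence-determining on the uniformly tight, uniformly mass-bounded family $\{\pi_\tau\}\subset\mc M^0$, no jumps are possible in the weak topology of $\mc M$, and $\pi\in\mc C([0,T],\mc M^0)$ follows. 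The delicate step throughout is the boundary identification: everything hinges on constructing $\psi_\e$ so that $\partial_{u_1}G_\e|_\Gamma$ remains of unit size while $\nabla G_\e$ becomes small in $L^2(\Omega_T)$, which is possible precisely because $\psi_\e'$ stays bounded on a shrinking interval of length $\e$.
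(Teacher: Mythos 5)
Your argument is correct. Note that the paper does not actually prove this lemma: it is quoted verbatim as Lemma 4.1 of \cite{FLM}, so there is no in-paper proof to compare against; your write-up is essentially the standard argument from that reference. The two pillars are exactly right: the quadratic optimization $J_{\lambda G}(\pi)\leq I_T(\pi)$ for all $\lambda$ giving $|L_G(\pi)|^2\leq 2\,I_T(\pi)\,Q_G(\pi)$, then boundary-layer test functions (with $\partial_{u_1}G_\e$ of unit size on $\Gamma$ but $\nabla G_\e$ small in $L^2$) to force $\varphi(B\rho_t)=\varphi(b)$, and time-localized test functions to kill jumps. Only two cosmetic points would need tidying in a final version: (i) the continuity argument as written covers jumps at interior times, and the case $t=T$ requires taking $\chi_\delta\equiv 1$ near $T$ and using the $\<\pi_T,G_T\>$ term rather than a cutoff supported in $(s-\delta,t+\delta)$; (ii) one should record that $B\rho_t\in[0,1]$ a.e.\ (e.g.\ via the chain rule for traces stated in Subsection 3.2), so that $\varphi(B\rho_t)$ is bounded and the strict monotonicity of $\varphi$ on $[0,1]$ (which uses $a>-1/2$) applies.
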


Next one is Corollary 4.3 in \cite{FLM}.

\begin{lemma}
\label{rfhe}
The density $\rho$ of a path $\pi_t(du)=\rho(t,u)du$ in $D([0,T],\mc
M^0)$ is a weak solution of the equation \eqref{f02} for some initial profile $\rho_0$ if and only if $I_T(\pi)$
vanishes.
\end{lemma}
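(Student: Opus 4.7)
The plan is to exploit the quadratic structure of $J_G$ in $G$: I would decompose $J_G(\pi) = \ell(G) - \frac{1}{2} Q(G)$, where
\begin{align*}
\ell(G) \;=\; & \<\pi_T,G_T\> - \<\pi_0,G_0\> - \int_0^T\<\pi_t,\partial_t G_t\>\,dt - \int_0^T\<\varphi(\rho_t),\Delta G_t\>\,dt \\
 & + \int_0^T\!dt\int_{\Gamma^+}\varphi(b)\,\partial_{u_1}G\,dS - \int_0^T\!dt\int_{\Gamma^-}\varphi(b)\,\partial_{u_1}G\,dS
\end{align*}
is linear in $G$ and $Q(G) = \int_0^T\<\s(\rho_t),\|\nabla G_t\|^2\>\,dt \ge 0$ is quadratic and nonnegative. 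Comparing with the weak formulation (H2) (taking into account that $\mathbf{n}_1 = \pm 1$ on $\Gamma^{\pm}$), the identity $\ell(G) = 0$ for every $G \in \mc C^{1,2}_0(\overline{\Omega_T})$ is exactly the statement that $\rho$ is a weak solution of \eqref{f02} with initial profile $\rho_0 = \rho(0,\cdot)$.

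For the ``if'' direction, suppose $\rho$ is a weak solution of \eqref{f02}. Then condition (H1) and the characterization of the energy $\mc Q_T$ recalled before Lemma \ref{lem01} give $\mc Q_T(\pi) < \infty$, and the weak formulation (H2) yields $\ell(G) = 0$ for every $G$ in $\mc C^{1,2}_0(\overline{\Omega_T})$. Hence $J_G(\pi) = -\tfrac{1}{2} Q(G) \le 0$, so $I_T(\pi) \le 0$; taking $G \equiv 0$ gives $J_0(\pi) = 0$, and therefore $I_T(\pi) = 0$.

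For the ``only if'' direction, suppose $I_T(\pi) = 0$. Then $\mc Q_T(\pi) < \infty$ by definition of $I_T$, which already supplies (H1). Given any $G \in \mc C^{1,2}_0(\overline{\Omega_T})$ and any $\ve \in \R$, the function $\ve G$ also lies in $\mc C^{1,2}_0(\overline{\Omega_T})$, so
\begin{equation*}
0 \;=\; I_T(\pi) \;\ge\; J_{\ve G}(\pi) \;=\; \ve\,\ell(G) - \frac{\ve^2}{2}\, Q(G)\, .
\end{equation*}
Dividing by $\ve > 0$ and letting $\ve \downarrow 0$ yields $\ell(G) \le 0$; dividing by $\ve < 0$ and letting $\ve \uparrow 0$ yields $\ell(G) \ge 0$. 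Thus $\ell(G) = 0$ for every $G \in \mc C^{1,2}_0(\overline{\Omega_T})$, which is precisely (H2). Together with the boundary condition $B(\rho_t) = b$ for a.e.\ $t$ supplied by Lemma \ref{lem01}, this shows that $\rho$ is a weak solution of \eqref{f02} with initial profile $\rho_0 := \rho(0,\cdot)$.

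I do not expect a genuine obstacle here: the argument is the standard ``polarization'' trick for quadratic rate functions. The only points requiring some care are verifying that the sum of boundary terms in $\ell(G)$ lines up with $\int_\Gamma \varphi(b)\,\mathbf{n}_1\,\partial_{u_1} G\,dS$ from (H2) (which it does, via $\mathbf{n}_1 = \pm 1$ on $\Gamma^{\pm}$), and invoking Lemma \ref{lem01} so that $\pi \in \mc C([0,T], \mc M^0)$, endpoint values are well-defined, and the trace condition on $b$ is automatic once $I_T(\pi) < \infty$.
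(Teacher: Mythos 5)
Your argument is correct. Note that the paper itself does not prove this lemma --- it is quoted verbatim as Corollary 4.3 of \cite{FLM} --- so there is no in-paper proof to compare against; your polarization argument (decompose $J_G = \ell(G) - \tfrac12 Q(G)$ with $\ell$ linear and $Q\ge 0$ quadratic, identify $\ell(G)=0$ with (H2) via $\mathbf{n}_1=\pm 1$ on $\Gamma^{\pm}$, and scale $G\mapsto \ve G$ to extract $\ell(G)=0$ from $\sup_G J_G = 0$) is precisely the standard route underlying the cited result, and both directions, including the finiteness of $\mc Q_T(\pi)$ needed for (H1), are handled properly.
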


Next result is a straightforward consequence of Corollary 4.4 and Lemma 4.5 in \cite{FLM}.

\begin{lemma}\label{lemsld}
Let $\{\pi^n(t,du) =\rho^n(t,u)du: n\geq 1\}$ be a sequence of trajectories in $D([0,T],\mc M^0)$ such that, for some positive constant $C$,
$$
\sup_{n\geq 1}\left\{I_T(\pi^n)\right\}\; \leq \; C.
$$
If $\rho^n$ converges to $\rho$ weakly in $L^2(\Omega_T)$ then
$\rho^n$ converges to $\rho$ strongly in $L^2(\Omega_T)$.
\end{lemma}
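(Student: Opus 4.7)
The plan is to upgrade weak convergence to strong convergence in $L^2(\Omega_T)$ via an Aubin-Lions type compactness argument. Since every $\rho^n$ takes values in $[0,1]$, the sequence is automatically bounded in $L^\infty(\Omega_T)$, but what we need is compactness, and this must be extracted from the uniform bound $I_T(\pi^n)\le C$.

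The first step would be to establish a uniform spatial energy bound by showing that $I_T(\pi^n) \le C$ forces $\mc Q_T(\pi^n) \le C'$ for a constant $C'$ independent of $n$. This is a standard consequence of the variational definition: in the functional $J_G$, the quadratic term $\tfrac{1}{2}\int_0^T\langle\sigma(\rho_t),\|\nabla G_t\|^2\rangle\,dt$ dominates, up to boundary contributions depending on $b$, the lower order linear terms, so testing against a suitable family of $G$'s yields an inequality of the form $\mc Q_T(\pi^n) \le \alpha + \beta I_T(\pi^n)$. Since on paths of finite energy $\mc Q_T(\pi) = \int_0^T\int_\Omega\|\nabla\rho_t(u)\|^2\,du\,dt$, this gives a uniform bound on $\rho^n$ in $L^2([0,T],H^1(\Omega))$. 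The second step would be to control the distributional time derivative $\partial_t\rho^n$ in $L^2([0,T],H^{-1}(\Omega))$. Exploiting the weak formulation encoded in $J_G(\pi^n)$ with $G\in\mc C^{\infty}_c(\Omega_T)$ together with the identity \eqref{wsprop02}, the rate function bound provides control on the $H^{-1}$-distance between $\partial_t\rho^n$ and $\Delta\varphi(\rho^n)$ weighted by $\sigma(\rho^n)$; since $\Delta\varphi(\rho^n)$ is controlled in $L^2([0,T],H^{-1}(\Omega))$ by the gradient bound from the first step and the boundedness of $\varphi'$ on $[0,1]$, one obtains a uniform bound on $\partial_t\rho^n$ in $L^2([0,T],H^{-1}(\Omega))$.

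With these two uniform bounds in hand, the Aubin-Lions lemma applied to the Gelfand triple $H^1(\Omega)\subset L^2(\Omega)\subset H^{-1}(\Omega)$, whose first embedding is compact by Rellich-Kondrachov, guarantees that $\{\rho^n\}$ is relatively compact in $L^2(\Omega_T)$. Every subsequence then admits a further subsequence converging strongly in $L^2(\Omega_T)$, and by uniqueness of the weak $L^2$-limit this strong limit must coincide with $\rho$; a standard sub-subsequence argument yields strong convergence of the entire sequence $\rho^n\to\rho$ in $L^2(\Omega_T)$. The main obstacle in this plan is extracting the $H^{-1}$-bound on $\partial_t\rho^n$ directly from the variational definition of $I_T$, since level sets of $I_T$ need not consist of weak solutions and the relationship between $\partial_t\rho^n$ and $I_T(\pi^n)$ must be read off by optimizing over test functions while keeping track of the boundary data. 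This is precisely the content of Corollary 4.4 and Lemma 4.5 of \cite{FLM}, so the present lemma follows by combining those results with the compactness scheme just described.
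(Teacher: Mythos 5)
Your argument is correct and coincides with the route the paper takes: the paper's proof of this lemma is simply a citation of Corollary 4.4 and Lemma 4.5 of \cite{FLM}, which supply exactly the two uniform bounds you extract from $\sup_n I_T(\pi^n)\le C$ (on $\rho^n$ in $L^2([0,T],H^1(\Omega))$ via the energy estimate, and on $\partial_t\rho^n$ in $L^2([0,T],H^{-1}(\Omega))$ via the representation \eqref{jflm}), and the Aubin--Lions compactness step plus the sub-subsequence argument is the standard way those bounds are converted into strong $L^2(\Omega_T)$ convergence. Nothing further is needed.
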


Finally, recall from Section 4 in \cite{FLM} that if $I_T(\pi)<\infty$ then $\partial_t\rho_t$ belongs to $L^2([0,T], H^{-1}(\Omega))$ and, for any function $G$ in $\mc C_0^{1,2}(\overline{\Omega_T})$,
\begin{eqnarray}\label{jflm}
\begin{aligned}
J_G(\pi)\; = \;& \int_0^T\<\partial_t\rho_t, G_t\>_{-1,1}\; dt +\int_0^T dt\int_{\Omega}du\; \nabla\varphi(\rho_t(u))\cdot\nabla G_t(u)
\\
& - \frac{1}{2}\int_0^T dt\int_{\Omega} du \;\sigma(\rho_t(u))\Vert\nabla G_t(u)\Vert^2\, .
\end{aligned}
\end{eqnarray}

Let $\mc C_1(\overline{\Omega})$ be the set of continuous functions $f:\overline{\Omega}\to \bb R$ such that
$$
\sup_{u\in\Omega}|f(u)| = 1\, .
$$
Recall that we may define a metric on $\mc M$ by introducing a dense countable family $\{f_k:\,k\geq 1\}$ of functions in $\mc C_1(\Omega)$, with $f_1=1$, and by defining the distance
\begin{eqnarray*}
d(\vartheta_1,\vartheta_2) = \sum_{k=1}^{\infty}\frac{1}{2^k}|\langle\vartheta_1,f_k\rangle - \langle\vartheta_2,f_k\rangle|\, .
\end{eqnarray*}

Let $\bb D$ be the space of measurable functions on $\Omega$ bounded below by $0$ and bounded above by $1$ endowed with the $L^2(\Omega)$ topology.
$$
\bb D = \{\rho:\Omega\to [0,1]\, : \;\, 0\leq \rho(u)\leq 1 \,\hbox{ a.e.}\}\, .
$$

For $\vartheta\in\mc M$, $\rho\in\bb D$ and $\varepsilon>0$, let us denote by $\mc B_{\varepsilon}(\vartheta)$ the open $\varepsilon$-ball in $\mc M$ with centre $\vartheta$ in the $d$-metric,
\begin{eqnarray*}
\mc B_{\varepsilon}(\vartheta)=\left\{\widetilde\vartheta\in\mc M:d\big(\widetilde\vartheta,\vartheta\big)<\varepsilon\right\},
\end{eqnarray*}
and by  $\bb B_{\varepsilon}(\rho)$ the open $\varepsilon$-ball in $\bb D$ with centre $\rho$ in the $L^2(\Omega)$ norm,
\begin{eqnarray*}
\bb B_{\varepsilon}(\rho)=\{\tilde\rho\in\bb D:\Vert\tilde\rho-\rho\Vert_2<\varepsilon\} \, .
\end{eqnarray*}

Next result states that any trajectory whose density stays a long time far away from $\bar\rho$ in the $L^2(\Omega)$ norm pays a nonnegligible cost.

For each $\delta>0$ and each $T>0$ denote by $D([0,T],\mc M^0
\backslash\bb B_{\delta}(\bar\rho))$ the set of trajectories $\pi(t,du) = \rho(t,u)du$ in $D([0,T],\mc M^0)$ such that $\rho_t\notin \bb B_{\delta}(\bar\rho)$ for all $0\leq t\leq T$.
\begin{lemma}\label{lem1}
For every $\delta>0$, there exists $T>0$ such that
$$
\inf \{I_T(\pi):\;\pi\in D([0,T],\mc M^0
\backslash\bb B_{\delta}(\bar\rho)) \} > 0\, .
$$

\end{lemma}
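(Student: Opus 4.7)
The plan is a contradiction argument. Fix $\delta > 0$ and suppose no such $T$ exists. Then for every $T > 0$ one finds trajectories in $D([0,T], \mc M^0 \setminus \bb B_\delta(\bar\rho))$ of arbitrarily small $I_T$-cost. My goal is to choose $T$ large enough to contradict the uniform $L^1$-approach of weak solutions of \eqref{f02} to $\bar\rho$ provided by Corollary \ref{lem02sld}.

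First I would check that the function $\Psi$ in Corollary \ref{lem02sld} satisfies $\Psi(t) \to 0$ as $t \to \infty$. From its construction, $\Psi(t) = \Vert\rho^1_t - \rho^0_t\Vert_1$ is nonincreasing (Lemma \ref{lem1-ann}), uniformly bounded by $|\Omega|$, and belongs to $L^2(\R_+)$ (Lemma \ref{s02}); a bounded, nonincreasing, square-integrable function on $\R_+$ must vanish at infinity. Fix $T$ with $\Psi(T/2) < \delta^2/2$.

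Now suppose $I_T(\pi^n) \to 0$ for some sequence $\pi^n_t(du) = \rho^n(t,u)\,du$ in $D([0,T], \mc M^0 \setminus \bb B_\delta(\bar\rho))$. By the compactness of the level sets of $I_T$ (Theorem \ref{dldp}), along a subsequence $\pi^n \to \pi$ in $D([0,T],\mc M)$ with $I_T(\pi) = 0$. Lemma \ref{lem01} places $\pi$ in $C([0,T],\mc M^0)$, and Lemma \ref{rfhe} identifies its density $\rho$ as a weak solution of \eqref{f02}. Since $\bar\rho$ is itself a stationary weak solution of \eqref{f02}, Corollary \ref{lem02sld} yields $\Vert\rho_t - \bar\rho\Vert_1 \leq \Psi(t) < \delta^2/2$ for every $t \in [T/2, T]$.

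The remaining step, and the main technical point, is to transfer the $L^2$-separation $\Vert\rho^n_t - \bar\rho\Vert_2 \geq \delta$ to the limit, given that the Skorohod topology on $\mc M$ only encodes weak convergence of measures. Convergence in that topology, combined with the uniform $L^{\infty}$ bound on $\rho^n$, yields $\rho^n \to \rho$ weakly in $L^2(\Omega_T)$; since $I_T(\pi^n)$ is uniformly bounded, Lemma \ref{lemsld} upgrades this to strong convergence in $L^2(\Omega_T)$. Passing to a further subsequence, $\rho^n_t \to \rho_t$ in $L^2(\Omega)$ for almost every $t \in [0,T]$, so $\Vert\rho_t - \bar\rho\Vert_2 \geq \delta$ a.e., and therefore $\Vert\rho_t - \bar\rho\Vert_1 \geq \Vert\rho_t - \bar\rho\Vert_2^2 \geq \delta^2$ a.e. (using that $\rho_t - \bar\rho$ takes values in $[-1,1]$). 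This contradicts the bound on $[T/2,T]$ derived from Corollary \ref{lem02sld}.
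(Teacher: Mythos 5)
Your proof is correct and follows essentially the same route as the paper's: both argue by contradiction, use Corollary \ref{lem02sld} to choose $T$ so that weak solutions are close to $\bar\rho$ on the second half of $[0,T]$, extract a limit of a minimizing sequence via compactness of the level sets of $I_T$, identify it as a weak solution by lower semicontinuity and Lemma \ref{rfhe}, and use Lemma \ref{lemsld} to upgrade to strong $L^2(\Omega_T)$ convergence before deriving the contradiction from the $L^2$-separation. The only differences are cosmetic (you pass to a.e.-$t$ convergence where the paper integrates $\|\rho^k_t-\rho_t\|_2^2$ over $[T_0,2T_0]$, and you make explicit the fact that $\Psi(t)\to 0$, which the paper leaves implicit).
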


\begin{proof}

By Corollary \ref{lem02sld}, there exists $T_0=T_0(\delta)>0$ such that for any weak solution $\lambda$ of \eqref{f02},
\begin{eqnarray}\label{eq1}
\Vert\lambda_t-\bar\rho\Vert_2<\delta/2 \qquad\hbox{ for all } t\geq T_0\, .
\end{eqnarray}

We assert that the statement of the lemma holds with $T=2T_0$. If this is not the case, there exists a sequence of trajectories $\{\pi^k_t(du) =\rho^k(t,u)du :\,k\geq 1\}$ in $D\left([0,T],\mc M^0\backslash\bb B_{\delta}(\bar\rho)\right)$ such that $I_T(\pi^k)\leq 1/k$. Since $I_T$ has compact level sets, by passing to a subsequence if necessary, we may assume that $\pi^k$ converges to some $\pi_t(du) = \rho(t,u)du$ in $D([0,T],\mc M^0)$. Moreover, by Lemma \ref{lemsld}, $\rho^k$ converges to $\rho$ strongly in $L^2(\Omega_T)$.

On the other hand, the lower semicontinuity of $I_T$ implies that $I_T(\pi) = 0$ or equivalently, by Lemma \ref{rfhe}, that $\rho$ is a weak solution of \eqref{f02}. Hence, by \eqref{eq1} and since $\left\Vert\rho^k_t-\bar\rho\right\Vert_2\geq\delta$ for all $t\in[0,T]$ and for all positive integer $k$,
\begin{eqnarray*}
\int_{0}^T\left\Vert\rho^k_t-\rho_t\right\Vert^2_2 dt\geq \int_{T_0}^{2T_0}\left\Vert\rho^k_t-\rho_t\right\Vert^2_2 dt \geq \delta^2T_0/4\, ,
\end{eqnarray*}
which contradicts the strong convergence of $\rho^k$ to $\rho$ in $L^2(\Omega_T)$ and we are done.

\end{proof}

The same ideas permit us to establish an analogous result for the weak topology as follows.
\begin{lemma}\label{cor1}
For every $\varepsilon>0$, there exists $T>0$ such that
\begin{eqnarray*}
\inf\left\{I_T(\pi):\, \pi\in D([0,T],\mc M)\; \hbox{ and }\; \pi_T\notin \mc B_{\varepsilon}(\overline\vartheta) \right\} > 0\, .
\end{eqnarray*}
\end{lemma}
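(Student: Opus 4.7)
The strategy parallels that of Lemma \ref{lem1}, but the weak-topology condition is now imposed only at the endpoint $T$, which must be chosen so that \emph{every} weak solution of \eqref{f02} is already close to $\overline\vartheta$ at time $T$. By Corollary \ref{lem02sld} there is $\Psi\in L^2(\R_+)$ with $\Psi(t)\to 0$ such that any two weak solutions $\rho^1,\rho^2$ of \eqref{f02} satisfy $\Vert\rho^1_t-\rho^2_t\Vert_1\leq \Psi(t)$; specializing $\rho^2\equiv\bar\rho$ (the stationary weak solution of \eqref{f02}) gives $\Vert\rho_T-\bar\rho\Vert_1\leq \Psi(T)$ for every weak solution $\rho$. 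Because the functions $f_k$ generating the metric $d$ on $\mc M$ satisfy $\Vert f_k\Vert_\infty=1$ with $\sum_k 2^{-k}=1$, one has $d(\vartheta_1,\vartheta_2)\leq \Vert\rho_1-\rho_2\Vert_1$ whenever $\vartheta_j(du)=\rho_j(u)du$. Choosing $T$ with $\Psi(T)<\varepsilon/2$ therefore guarantees that the endpoint at time $T$ of every weak solution of \eqref{f02} lies in $\mc B_{\varepsilon/2}(\overline\vartheta)$.

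With such a $T$ fixed, I argue by contradiction. If the infimum vanished, there would exist a sequence $\pi^k\in D([0,T],\mc M)$ with $\pi^k_T\notin\mc B_\varepsilon(\overline\vartheta)$ and $I_T(\pi^k)\to 0$. Using the compactness of the level sets of the (unrestricted) functional $I_T$---one of the easy extensions of the results of Section 4 in \cite{FLM} mentioned right after Theorem \ref{dldp}---and passing to a subsequence, $\pi^k\to \pi$ in the Skorohod topology, and by lower semicontinuity $I_T(\pi)=0$. Lemma \ref{rfhe} then identifies the density $\rho$ of $\pi$ with a weak solution of \eqref{f02}, while Lemma \ref{lem01} places $\pi$ in $C([0,T],\mc M^0)$.

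Since Skorohod convergence to a continuous limit is uniform on $[0,T]$, we deduce $\pi^k_T\to \pi_T$ in $\mc M$, and since the complement of $\mc B_\varepsilon(\overline\vartheta)$ is closed, $\pi_T\notin\mc B_\varepsilon(\overline\vartheta)$. But by the first step $\pi_T$, being the value at time $T$ of a weak solution of \eqref{f02}, must lie in $\mc B_{\varepsilon/2}(\overline\vartheta)$, a contradiction.

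The only mildly delicate point is upgrading the Skorohod convergence to convergence of the endpoints $\pi^k_T$, which is precisely the reason we needed the limit to land in $C([0,T],\mc M^0)$ (via Lemma \ref{lem01}); the remaining ingredients are already assembled in Section 3.
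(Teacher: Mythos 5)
Your proof is correct and follows essentially the same route as the paper: fix $T$ via Corollary \ref{lem02sld} so that every weak solution of \eqref{f02} is within $\varepsilon/2$ of $\overline\vartheta$ at time $T$, then argue by contradiction using compactness of the level sets of $I_T$, lower semicontinuity, and Lemmas \ref{lem01} and \ref{rfhe} to produce a limiting weak solution whose endpoint must simultaneously lie in and outside a neighborhood of $\overline\vartheta$. The only cosmetic difference is that you control $d(\pi_T,\overline\vartheta)$ via the $L^1$ bound $d\le\Vert\cdot\Vert_1\le\Psi(T)$ directly, while the paper passes through the $L^2$ estimate \eqref{eq1}; both rest on the same corollary.
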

\begin{proof}
Let $\delta=\varepsilon/\sqrt{2}$ and consider $T_0>0$ satisfying \eqref{eq1}. Set $T=T_0$ and assume that the statement of the corollary does not hold. In that case, since $I_T$ has compact level sets, by Lemmas \ref{lemsld} and \ref{rfhe}, there exists a sequence of trajectories $\{\pi^k:\;k\geq 1\}$ in $\mc C([0,T],\mc M^0)$, with $\pi^k_T\notin \mc B_{\varepsilon}(\overline\vartheta)$, converging to some $\pi$ whose density is a weak solution of $\eqref{f02}$. By \eqref{eq1} and since $\bb B_{\delta/2}(\bar\rho)\subset\mc B_{\varepsilon/2}(\overline\vartheta)$, $\pi_T$ belongs to $\mc B_{\varepsilon/2}(\bar\vartheta)$. Hence, for every integer $k>0$,
$$
d\big(\pi^k_T,\pi_T\big)>\varepsilon/2\, ,
$$
which contradicts the convergence of $\pi^k$ to $\pi$ in $C([0,T],\mc M^0)$.
\end{proof}

We conclude this section with an estimate on the cost paying by backwards solutions of the hydrodynamic equation \eqref{f02}. Fix a weak solution $\rho$ of \eqref{f02} and denote by $\pi(t,du) =\tilde\rho(t,u)du$ the path in $C([0,T],\mc M^0)$ with density given by $\tilde\rho(t,du)=\rho(T-t,u)du$. It is clear that $\mc Q_T(\pi) = \mc Q_T(\rho_t(u)du)<\infty$ and that $\partial_t\tilde\rho_t = -\partial_t\rho_{T-t}$. Hence, by \eqref{jflm} and since $\rho$ is a weak solution of \eqref{f02}, for each $G$ in $\mc C^{1,2}_0(\overline{\Omega_T})$,
\begin{eqnarray*}
J_G(\pi) & = & \int_0^T\<\partial_t\tilde\rho_t,G_t\>_{-1,1}\; dt + \int_0^Tdt\int_{\Omega}du\;\nabla\varphi(\tilde\rho_t(u))\cdot\nabla G_t(u) 
\\
& & -\frac{1}{2}\int_0^T dt\int_{\Omega} du\;\sigma(\tilde\rho_t(u))\Vert\nabla G_t(u)\Vert^2
\\
&=&-\int_0^T\<\partial_t\rho_t,\widehat G_t\>_{-1,1}\; dt + \int_0^Tdt\int_{\Omega}du\;\nabla\varphi(\rho_t(u))\cdot\nabla \widehat G_t(u) 
\\
& & -\frac{1}{2}\int_0^T dt\int_{\Omega} du\;\sigma(\rho_t(u))\Vert\nabla \widehat G_t(u)\Vert^2
\\
& = & 2\!\int_0^T \!\!\!dt \!\int_{\Omega}\! du\; \nabla\varphi(\rho_t(u))\cdot\nabla\widehat{G}_t(u) - \frac{1}{2}\!\int_0^T \!\!\!dt \!\int_{\Omega} \!du\; \sigma(\rho_t(u))\Vert\nabla\widehat{G}_t(u)\Vert^2
\\
& \leq & \int_0^T dt\int_{\Omega} du\; \varphi'(\rho_t(u))\frac{\Vert\nabla\rho_t(u)\Vert^2}{\chi(\rho_t(u))}\, ,
\end{eqnarray*}
where $\widehat{G}(t,u) = G(T-t,u)$ and where the last inequality follows from the elementary inequality $2ab\leq Aa^2 + A^{-1}b^2$. In particular, from the definition of $\mc E_T(\rho)$ given in Subsection \ref{subsecenest} and since $\varphi'$ is bounded above on $[0,1]$ by some constant $C_0>0$,
\begin{eqnarray}\label{rfe}
I_T(\pi)\leq C_0\mc E_T(\rho)\, .
\end{eqnarray}


\subsection{The statical rate function}
\label{secsrf}

In this subsection we study some properties of the quasi potential $V$. The first main result, presented in Theorem \ref{th2sld}, states that $V$ is continuous at $\bar\rho$ in the $L^2(\Omega)$ topology. The second one, presented in Theorem \ref{lscsld}, states that $V$ is lower semicontinuous.

We start with an estimate on $V(\vartheta)$ which is the main ingredient in the proof of the former.
Let $\bb V:\bb D\to \bb [0,+\infty]$ be the functional given by $\bb V(\rho) = V(\rho(u)du)$. For each $h>0$ and each $\delta>0$, let $\bb D^h_{\delta}$ be the subset of $\bb D$ consisting of those profiles $\rho$ satisfying the following conditions:
\begin{enumerate}
 \item[\bf{\it i)}] $\rho$ belongs to $\in H^1(\Omega)$ and $B\rho = b$.
 \item[\it ii)] $\int_{\Omega}\Vert\nabla\rho(u)\Vert^2du\leq h$.
 \item[\it iii)] $\delta\leq\rho(u)\leq 1-\delta$ a.e. in $\Omega$.
\end{enumerate}

\begin{lemma}\label{quapotest}
For every $h>0$ and every $\delta>0$, there exists a constant $C>0$ such that
\begin{eqnarray*}
\bb V(\rho)\leq C\left\{\Vert\rho-\bar\rho\Vert_2^2\int_0^1\alpha'(t)^2 dt + \int_0^1\alpha(t)^2 dt\right\}
\end{eqnarray*}
for any $\rho$ in $\bb D^h_{\delta}$ and any increasing $\mc C^1$-diffeomorphism $\alpha:[0,1]\to[0,1]$.
\end{lemma}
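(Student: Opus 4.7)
The plan is to construct an explicit trajectory connecting $\overline\vartheta$ to $\rho(u)du$ on the time interval $[0,1]$ and to bound its dynamical rate function. Given the diffeomorphism $\alpha$, set
\[
\lambda(t,u) \;=\; \bar\rho(u) + \alpha(t)\bigl(\rho(u)-\bar\rho(u)\bigr)\, , \qquad (t,u)\in [0,1]\times\Omega\, ,
\]
and let $\pi(t,du) = \lambda(t,u)\,du$. Since $\alpha(0)=0$ and $\alpha(1)=1$, $\pi_0=\overline\vartheta$ and $\pi_1 = \rho(u)du$; condition (i) on $\rho$, combined with $\bar\rho\in H^1(\Omega)$ with trace $b$, yields $\lambda_t\in H^1(\Omega)$ and $B\lambda_t=b$ for every $t$. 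Applying Corollary \ref{lem05sld} to the constant-in-time solution $\bar\rho$ produces some $\varepsilon>0$ with $\varepsilon\leq\bar\rho\leq 1-\varepsilon$, which combined with (iii) shows $\lambda_t$ is bounded away from $0$ and $1$ uniformly in $t$, hence $\sigma(\lambda_t)\geq c>0$ a.e. By the definition of $V$, $V(\rho(u)du)\leq I_1(\pi)$, reducing the lemma to an estimate of $I_1(\pi)$.

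To control $J_G(\pi)$, I start from the definition of $\hat J_G$ and integrate by parts in space using $B\lambda_t=b$, which cancels the boundary contributions; integration by parts in time then produces the representation \eqref{jflm}:
\[
J_G(\pi) = \int_0^1 \alpha'(t)\!\int_\Omega(\rho-\bar\rho)G_t\,du\,dt + \int_0^1\!\!\int_\Omega \nabla\varphi(\lambda_t)\cdot\nabla G_t\,du\,dt - \tfrac{1}{2}\int_0^1\!\!\int_\Omega \sigma(\lambda_t)\|\nabla G_t\|^2 du\,dt\, ,
\]
since $\partial_t\lambda_t=\alpha'(t)(\rho-\bar\rho)$. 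Introducing $\psi\in H^1_0(\Omega)$ with $-\Delta\psi=\rho-\bar\rho$ converts the first term into $\int_0^1\!\!\int_\Omega \alpha'(t)\nabla\psi\cdot\nabla G_t\,du\,dt$. Since $\bar\rho$ is a weak solution of \eqref{f01}, integration by parts gives $\int_\Omega\nabla\varphi(\bar\rho)\cdot\nabla G\,du=0$ for every $G\in H^1_0(\Omega)$, so the second term is unchanged under $\nabla\varphi(\lambda_t)\mapsto \nabla[\varphi(\lambda_t)-\varphi(\bar\rho)]$. Applying $2\,a\cdot b\leq a^2/\sigma(\lambda_t)+\sigma(\lambda_t)b^2$ pointwise and taking the supremum over $G$ yields
\[
I_1(\pi) \;\leq\; \frac{1}{2c}\int_0^1\!\!\int_\Omega \bigl\|\alpha'(t)\nabla\psi + \nabla[\varphi(\lambda_t)-\varphi(\bar\rho)]\bigr\|^2 du\,dt\, .
\]

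Two elementary bounds finish the argument. Poincar\'e's inequality gives $\|\nabla\psi\|_2^2\leq C\|\rho-\bar\rho\|_2^2$, producing the first summand in the statement. For the second, the key observation is that the polynomial form $\varphi(r)=r(1+ar)$ together with $\lambda_t-\bar\rho=\alpha(t)(\rho-\bar\rho)$ furnishes the clean factorization
\[
\varphi(\lambda_t) - \varphi(\bar\rho) \;=\; \alpha(t)(\rho-\bar\rho)\bigl[1+a(\lambda_t+\bar\rho)\bigr]\, ,
\]
exposing an explicit factor of $\alpha(t)$. Differentiating the right-hand side and using $\|\nabla\rho\|_2^2\leq h$, $\bar\rho\in H^1(\Omega)$, and the uniform $L^\infty$-bounds on $\lambda_t$, $\rho$, $\bar\rho$, one obtains $\int_\Omega\|\nabla[\varphi(\lambda_t)-\varphi(\bar\rho)]\|^2\,du\leq C'\alpha(t)^2$. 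Combining with $\|a+b\|^2\leq 2\|a\|^2+2\|b\|^2$ and integrating over $t$ gives the claimed inequality.

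The main technical point is the extraction of the factor $\alpha(t)^2$: without subtracting $\nabla\varphi(\bar\rho)$, the ``diffusion'' term would contribute an $\alpha$-independent piece of order $\int_0^1\!\int_\Omega\|\nabla\bar\rho\|^2\,du\,dt$, destroying the dependence on $\int_0^1\alpha(t)^2\,dt$ that is crucial for the continuity application (which requires the right-hand side to vanish with $\|\rho-\bar\rho\|_2$ through a suitable choice of $\alpha$). The stationarity identity $\int_\Omega\nabla\varphi(\bar\rho)\cdot\nabla G\,du=0$ on $H^1_0(\Omega)$, together with the polynomial structure of $\varphi$, is precisely what makes this factorization possible.
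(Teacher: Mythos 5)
Your proof is correct and follows essentially the same route as the paper's: the same interpolating path $\lambda_t=(1-\alpha(t))\bar\rho+\alpha(t)\rho$, the representation \eqref{jflm}, the use of the stationarity of $\bar\rho$ to replace $\nabla\varphi(\lambda_t)$ by $\nabla[\varphi(\lambda_t)-\varphi(\bar\rho)]$ and extract the factor $\alpha(t)^2$, and a Poincar\'e-type bound giving the $\Vert\rho-\bar\rho\Vert_2^2\int_0^1\alpha'(t)^2dt$ term. The only (cosmetic) differences are that you route the time-derivative term through an auxiliary Poisson problem rather than applying Poincar\'e directly to $G_t$, and you use the explicit quadratic factorization of $\varphi(x)-\varphi(y)$ where the paper uses the Lipschitz bound on $\varphi'$.
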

\begin{proof}
Fix $h>0$ and $\delta>0$. Let $\rho\in\bb D^h_{\delta}$ and let $\alpha:[0,1]\to[0,1]$ be an increasing $\mc C^1$-diffeomorphism. Consider the path $\pi^{\alpha}_t(du)=\rho^{\alpha}(t,u)du$ in $\mc C([0,T],\mc M^0)$ with density given by $\rho^{\alpha}_t = (1-\alpha(t))\bar\rho+\alpha(t)\rho$. From condition  $i)$, it is clear that $\rho^{\alpha}$ belongs to $L^2([0,1],H^1(\Omega))$ (which implies that $\mc Q_1(\pi^{\alpha})<\infty$) and that $B\rho^{\alpha}_t=b$ for every $t$ in $[0,1]$. Further, since $\bar\rho$ solves \eqref{f01}, for every function $G$ in $C^{1,2}_0(\overline{\Omega_1})$ and every $t$ in $[0,1]$,
\begin{eqnarray*}
\int_{\Omega} \nabla\varphi(\rho_t^{\alpha}(u))\cdot\nabla G_t(u) du & = & \int_{\Omega} \nabla[\varphi(\rho_t^{\alpha}(u))-\varphi(\bar\rho(u))]\cdot\nabla G_t(u) du
\\ & = & \int_{\Omega} \Psi_t^{\alpha}(u)\cdot\nabla G_t(u) du \, ,
\end{eqnarray*}
where $\Psi^{\alpha}_t=\alpha(t)\varphi'(\rho_t^{\alpha})\nabla(\rho-\bar\rho) +[\varphi'(\rho_t^{\alpha})-\varphi'(\bar\rho)]\nabla\bar\rho$. From the definition of $\rho^{\alpha}$ it is easy to see that $\partial_t\rho^{\alpha}(t,u) = \alpha'(t)(\rho(u)-\bar\rho(u))$. Hence, by \eqref{jflm},
\begin{eqnarray}\label{eq3}
\begin{aligned}
 J_G(\pi^{\alpha}) \; = \;& \int_0^1 \alpha'(t)\langle\rho-\bar\rho,G_t\rangle\, dt + \int_0^1 dt\,\int_{\Omega} du\, \Psi_t^{\alpha}(u)\cdot\nabla G_t(u)
\\
& \;- \frac{1}{2}\int_0^1 \<\sigma(\rho^{\alpha}_t),\Vert\nabla G_t\Vert^2\> \, dt\, .
\end{aligned}
\end{eqnarray}

Recall that $\bar\rho$ is bounded away from $0$ and $1$. Therefore, from condition $iii)$, there exists a constant $C_1=C_1(\delta)>0$ such that the third term on the right hand side of \eqref{eq3} is bounded above by
\begin{eqnarray*}
-C_1\int_0^1dt\int_{\Omega} du\,\Vert\nabla G_t(u)\Vert^2 \, .
\end{eqnarray*}

On the other hand, by the inequality $2ab\leq Aa^2+A^{-1}b^2$ and by Poincar\'e's inequality, there exists a constant $C_2>0$ such that the first term on the right hand side of \eqref{eq3} is bounded by
\begin{eqnarray*}
C_2\Vert\rho-\bar\rho\Vert_2^2\int_0^1 \alpha'(t)^2 dt + \frac{C_1}{2}\int_0^1dt\int_{\Omega} du\; \Vert\nabla G_t(u)\Vert^2 \, .
\end{eqnarray*}

Finally, from condition $ii)$ and since $\varphi'$ is bounded and Lipschitz on $[0,1]$, there is a constant $C'=C'(h)>0$ such that $\int_{\Omega} \Vert\Psi^{\alpha}_t(u)\Vert^2 du\leq C'\alpha(t)^2$ for every $t$ in $[0,1]$. Hence, by the inequality $2ab\leq Aa^2+A^{-1}b^2$ and by Schwarz inequality, there exists a constant $C_3 = C_3(h,\delta)>0$ such that the second term on the right hand side of \eqref{eq3} is bounded by
\begin{eqnarray*}
C_3\int_0^1\alpha(t)^2dt + \frac{C_1}{2}\int_0^1dt\int_{\Omega} du\;\Vert\nabla G_t(u)\Vert^2\, .
\end{eqnarray*}

Adding these three bounds, we obtain that
\begin{eqnarray*}
 J_G(\pi^{\alpha})\leq C_2\Vert\rho-\bar\rho\Vert_2^2\int_0^1\alpha'(t)^2dt +  C_3\int_0^1\alpha(t)^2 dt\, 
\end{eqnarray*}
for any function $G$ in $\mc C^{1,2}_0(\overline{\Omega_1})$, which implies the desired result.
\end{proof}

\begin{theorem}\label{th2sld}
$\bb V$ is continuous at $\bar\rho$.
\end{theorem}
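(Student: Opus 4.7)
Since $\bb V(\bar\rho) = 0$, continuity at $\bar\rho$ reduces to the upper bound $\limsup \bb V(\rho) \leq 0$ as $\rho \to \bar\rho$ in $L^2(\Omega)$. Lemma \ref{quapotest} already provides the bound one would want, but only for profiles in $\bb D^h_\delta$: a generic $\rho$ close to $\bar\rho$ in $L^2$ may fail to lie in $H^1$, may not have trace equal to $b$, and need not be bounded away from $0$ and $1$. My plan is to first regularize $\rho$ by running the hydrodynamic equation \eqref{f02} forward a short time $s$, apply Lemma \ref{quapotest} at the smoothed profile, and pay only a small price for the reversal of this hydrodynamic step using the energy estimate \eqref{rfe}.

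Let $\lambda$ denote the weak solution of \eqref{f02} with initial datum $\rho$. By Lemma \ref{lem2}, $\mc E_s(\lambda) \leq K(s + \Vert \rho - \bar\rho\Vert_1) \leq K(s + |\Omega|)$, and since $\chi$ is bounded on $[0,1]$ this gives a uniform (in $\rho$) bound on $\int_0^s \Vert \nabla \lambda_t\Vert_2^2\, dt$. Averaging on $(s/2, s)$, combined with the facts that $B\lambda_t = b$ a.e.\ and that Corollary \ref{lem05sld} forces $\delta_s \leq \lambda_t \leq 1-\delta_s$ a.e.\ for $t \geq s/2$, produces some $t_0 \in (s/2, s)$ with $\lambda_{t_0} \in \bb D^{h(s)}_{\delta_s}$, where both $h(s)$ and $\delta_s$ depend only on $s$. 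The reversal of $\lambda|_{[0,t_0]}$ is a path from $\lambda_{t_0}(u)du$ to $\rho(u)du$ of $I_{t_0}$-cost at most $C_0\, \mc E_{t_0}(\lambda) \leq C_0 K (s + \Vert \rho - \bar\rho\Vert_1)$ by \eqref{rfe}.

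To link $\bar\vartheta$ to $\lambda_{t_0}(u)du$ I would apply Lemma \ref{quapotest} with $\alpha(t) = t^n$ (after an arbitrarily small perturbation making $\alpha$ a genuine diffeomorphism); a direct computation gives $\int_0^1 \alpha'^2 \leq n$ and $\int_0^1 \alpha^2 \leq 1/n$. Setting $\eta = \Vert \lambda_{t_0} - \bar\rho\Vert_2$ and optimizing $n \sim 1/\eta$ yields $\bb V(\lambda_{t_0}) \leq C_1(s)\, \eta$. By Lemma \ref{lem1-ann} and the uniform $L^\infty$-bound, $\eta^2 \leq \Vert \lambda_{t_0} - \bar\rho\Vert_1 \leq \Vert \rho - \bar\rho\Vert_1 \leq |\Omega|^{1/2}\Vert \rho - \bar\rho\Vert_2$. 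Concatenating the two paths,
\begin{equation*}
\bb V(\rho) \;\leq\; C_1(s)\, \eta \;+\; C_0 K \bigl( s + \Vert \rho - \bar\rho\Vert_1 \bigr).
\end{equation*}
Given $\varepsilon > 0$, first choose $s$ small enough that $C_0 K s < \varepsilon/2$; with $s$ fixed, $C_1(s)$ becomes a fixed constant, and taking $\Vert \rho - \bar\rho\Vert_2$ sufficiently small makes both $C_1(s)\eta$ and $C_0 K \Vert \rho - \bar\rho\Vert_1$ smaller than $\varepsilon/4$.

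The main obstacle is the tension between choosing $s$ small (needed to suppress the reverse-path cost $\leq C_0 K s$) and retaining control of the regularity constants $h(s), \delta_s$ that enter the Lemma \ref{quapotest} prefactor $C_1(s)$, which blow up as $s \downarrow 0$. The resolution is that $s$ is chosen once for all as a function of $\varepsilon$ only, after which $C_1(s)$ is a single number and the entire residual smallness is driven by $\eta \to 0$, which in turn follows from $L^1$-contraction of hydrodynamic solutions together with the uniform $L^\infty$-bound that upgrades $L^1$-smallness to $L^2$-smallness.
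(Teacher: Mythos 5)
Your proposal is correct and follows essentially the same route as the paper: regularize $\rho$ by running the hydrodynamic equation for a short time chosen as a function of $\varepsilon$ only, land in some $\bb D^h_{\delta}$, bound the cost of the reversed hydrodynamic segment by $C_0\mc E_T(\lambda)$ via \eqref{rfe}, and control $\bb V$ at the smoothed profile through Lemma \ref{quapotest} together with the chain $\Vert\lambda_{t_0}-\bar\rho\Vert_2^2\le\Vert\rho-\bar\rho\Vert_1\le C\Vert\rho-\bar\rho\Vert_2$. The only difference is cosmetic: you make the infimum over $\alpha$ quantitative by taking $\alpha(t)=t^n$ and optimizing $n\sim\eta^{-1}$, while the paper sends the profile to $\bar\rho$ first and then infimizes over $\alpha$.
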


\begin{proof}
We will prove first that the restriction of $\bb V$ to the sets $\bb D^h_{\delta}$ is continuous at $\bar\rho$. Fix then $h>0$ and $\delta>0$. Let $\{\rho^n:\,n\geq 1\}$ be a sequence in $\bb D^h_{\delta}$ converging to $\bar\rho$. By Lemma \ref{quapotest}, there is a constant $C=C(h,\delta)$ such that

\begin{eqnarray*}
\bb V(\rho^n)\leq C\left\{\Vert\rho^n-\bar\rho\Vert_2^2\int_0^1\alpha'(t)^2 dt + \int_0^1\alpha(t)^2 dt\right\}
\end{eqnarray*}
for any integer $n>0$ and any increasing $\mc C^1$-diffeomorphism $\alpha:[0,1]\to[0,1]$.
Thus, by letting $n\uparrow\infty$ and then taking the infimum over all the increasing $\mc C^1$-diffeomorphisms $\alpha:[0,1]\to[0,1]$, we conclude that
\begin{eqnarray*}
\limsup_{n\to\infty}\bb V(\rho^n)\leq C\inf_{\alpha}\left\{\int_0^1\alpha(t)^2 dt\right\} = 0\, .
\end{eqnarray*}

We deal now with the general case. Let $\{\rho^n:\,n\geq 1\}$ be a sequence in $\bb D$ converging to $\bar\rho$. Fix $\varepsilon> 0$. For each integer $n>0$, let $\lambda^n$ be the weak solution of \eqref{f02} starting at $\rho^n$. By Lemma \ref{lem2}, there exist $T = T(\varepsilon)>0$ and $N_0 = N_0(\varepsilon)>0$ such that, for all integer $n>N_0$,
\begin{eqnarray}\label{est3}
\mc E_T(\lambda^n)\leq \varepsilon\, .
\end{eqnarray}
In particular, there exists $T'\leq T_n\leq 2T' = T$ such that
\begin{eqnarray*}
\int_{\Omega} \Vert\nabla\lambda^n_{T_n}(u)\Vert^2du\leq \varepsilon/T'\, .
\end{eqnarray*}
Moreover, by Lemma \ref{lem05sld}, there exists $\delta=\delta(T')>0$ such that $\delta\leq\lambda_{T_n}^n(u)\leq 1-\delta$ for every integer $n>N_0$ and for every $u$ in $\Omega$. Hence, $\lambda^n_{T_n}$ belongs to $\bb D^{\varepsilon/T'}_{\delta}$. Further, by Lemma \ref{lem1-ann},
$$
\sqrt{2}\Vert\rho^n-\bar\rho\Vert_2 \geq \Vert\rho^n-\bar\rho\Vert_1 \geq \Vert\lambda_{T_n}^n-\bar\rho\Vert_1 \geq \Vert\lambda_{T_n}^n-\bar\rho\Vert_2^2\, ,
$$
which implies that $\lambda_{T_n}^n$ also converges to $\bar\rho$ in $L^2(\Omega)$. Therefore, by the first part of the proof,
\begin{eqnarray*}
\lim_{n\to\infty}\bb V(\lambda_{T_n}^n) = 0\, .
\end{eqnarray*}

For each integer $n>0$, let $\pi^n$ be the path in $C([0,T_n],\mc M)$ given by $\pi^n_t(du) =\lambda^n(T_n-t,u)du$. By \eqref{rfe} and \eqref{est3}, for every integer $n>N_0$,
\begin{eqnarray*}
I_{T_n}(\pi^n)\leq C_0 \mc E_{T_n}(\lambda^n)\leq C_0\varepsilon\, .
\end{eqnarray*}

In particular,
\begin{eqnarray*}
\limsup_{n\to\infty}\bb V(\rho^n) \leq \lim_{n\to\infty}\bb V(\lambda_{T_n}^n) +\limsup_{n\to\infty}I_{T_n}(\pi^n)
\leq C_0\varepsilon\, ,
\end{eqnarray*}
which, by the arbitrariness of $\varepsilon$, implies the desired result.
\end{proof}

Similar arguments permit us to show that the quasi potentials of measures in $\mc M^0$ are uniformly bounded.
\begin{proposition}\label{quapotfin}
$V(\vartheta)$ is finite if and only if $\vartheta$ belongs to $\mc M^0$. Moreover,
\begin{eqnarray*}
\sup_{\vartheta\in\mc M^0}V(\vartheta) < \infty\, .
\end{eqnarray*}
\end{proposition}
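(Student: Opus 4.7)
The plan is to mimic the strategy of Theorem \ref{th2sld}: build, for any $\vartheta=\rho(u)\,du\in\mc M^0$, an explicit trajectory from $\overline\vartheta$ to $\vartheta$ whose cost admits a bound depending only on the geometry of $\Omega$. This simultaneously yields finiteness of $V$ on $\mc M^0$ and the claimed uniform estimate. The reverse direction---finiteness of $V(\vartheta)$ forces $\vartheta\in\mc M^0$---was already noted after the definition of $V$: by Lemma \ref{lem01}, any path with $I_T<\infty$ lives in $D([0,T],\mc M^0)$, hence so does its endpoint.

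First I would let $\lambda$ denote the weak solution of \eqref{f02} with initial profile $\rho$ and apply Lemma \ref{lem2}. Since the bound $\|\rho-\bar\rho\|_1\le 2|\Omega|$ holds automatically, one gets $\mc E_1(\lambda)\le C(1+2|\Omega|)=:M$, a constant independent of $\rho$. Because $\chi$ is bounded on $[0,1]$, this also controls $\int_0^1\int_\Omega\|\nabla\lambda_t(u)\|^2\,du\,dt$ by a universal $M'$. By a Chebyshev-type argument there exists $T_1\in[1/2,1]$ at which $\int_\Omega\|\nabla\lambda_{T_1}(u)\|^2\,du\le 4M'$, at which $B\lambda_{T_1}=b$ (a property valid for a.e.\ $t$), and at which---by Corollary \ref{lem05sld} applied at the depth parameter $1/2$---one has $\delta\le\lambda_{T_1}\le 1-\delta$ a.e.\ for an absolute constant $\delta>0$. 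Thus $\lambda_{T_1}\in\bb D^h_\delta$ for a fixed pair $(h,\delta)$ that does \emph{not} depend on $\rho$.

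Next I would concatenate two segments. On $[0,1]$ I take the linear interpolation $\pi^1_t(du)=\big((1-t)\bar\rho+t\lambda_{T_1}\big)(u)\,du$; Lemma \ref{quapotest} with $\alpha(t)=t$ and this fixed $(h,\delta)$ gives
$$
I_1(\pi^1)\;\le\;C(h,\delta)\Big\{\|\lambda_{T_1}-\bar\rho\|_2^2+\tfrac13\Big\}\;\le\;C(h,\delta)\big(|\Omega|+\tfrac13\big).
$$
On $[1,1+T_1]$ I take the time-reversed hydrodynamic path $\pi^2_t(du)=\lambda(T_1-(t-1),u)\,du$, which travels continuously from $\lambda_{T_1}(u)\,du$ to $\vartheta$; the computation culminating in \eqref{rfe} gives $I_{T_1}(\pi^2)\le C_0\,\mc E_{T_1}(\lambda)\le C_0M$. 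The concatenation $\pi$ is continuous at $t=1$, connects $\overline\vartheta$ to $\vartheta$ in time $1+T_1\le 2$, and satisfies $I_{1+T_1}(\pi)\le I_1(\pi^1)+I_{T_1}(\pi^2)$ by splitting $J_G$ at $t=1$ inside the variational definition of $I$. This yields a universal bound on $V(\vartheta)$, proving both assertions.

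The main subtle point is arranging that $\lambda_{T_1}$ lies in a \emph{fixed} $\bb D^h_\delta$, so that the constant $C(h,\delta)$ in Lemma \ref{quapotest} does not deteriorate as $\rho$ varies over $\mc M^0$---without this, the bound on $I_1(\pi^1)$ would blow up. This is exactly the role of Lemma \ref{lem2} (whose right-hand side is automatically bounded since the $L^1$-distance between any two profiles in $\mc M^0$ is at most $2|\Omega|$) together with the uniform separation from $\{0,1\}$ supplied by Corollary \ref{lem05sld}. Once this pin is secured, the rest of the argument is essentially a rerun of the proof of Theorem \ref{th2sld}.
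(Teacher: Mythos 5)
Your proposal is correct and follows essentially the same route as the paper's proof: relax from $\rho$ along the hydrodynamic equation for a time of order one, use Lemma \ref{lem2} (with the trivial bound $\Vert\rho-\bar\rho\Vert_1\le|\Omega|$) and Corollary \ref{lem05sld} to land in a fixed $\bb D^h_{\delta}$, bound the cost of the time-reversed relaxation by \eqref{rfe}, and bound the quasi potential of the intermediate profile uniformly via Lemma \ref{quapotest}. The only cosmetic difference is that you make the concatenation and the Chebyshev selection of the intermediate time explicit, whereas the paper packages the first segment as the inequality $\bb V(\rho)\le\bb V(\lambda_T)+I_T(\pi)$.
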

\begin{proof}
By Lemmas \ref{lem05sld} and \ref{lem2}, there exist constants $\delta>0$ and $h>0$ such that, for every weak solution $\lambda$ of the equation \eqref{f02},
\begin{eqnarray}\label{quapotfin1}
\delta\leq \lambda(t,u)\leq 1-\delta\;\; \forall (t,u) \in [1,\infty)\times\Omega \;\;\;\hbox{ and }\;\;\; \mc E_2(\lambda)\leq h\, .
\end{eqnarray}

Fix $\vartheta(du)= \rho(u)du$ in $\mc M^0$ and let $\lambda$ be the weak solution of \eqref{f02} starting at $\rho$. By \eqref{quapotfin1}, there exists a time $1\leq T\leq 2$ such that $\lambda_T$ belongs to $\bb D^h_{\delta}$. Moreover, if we denote by $\pi$ the path in $C([0,T], \mc M^0)$ given by $\pi(t,du) = \lambda(T-t,u)du$, by \eqref{rfe}, there exists a constant $C_1 =C_1(h)>0$ such that $I_T(\pi)\leq C_1$. Hence, by Lemma \ref{quapotest}, there exists a constant $C_2=C_2(h,\delta)>0$ such that
\begin{eqnarray*}
\bb V(\rho)\leq \bb V(\lambda_T) + I_T(\pi) \leq C_2+C_1\, .
\end{eqnarray*}
\end{proof}
For any real numbers $r<s$ and any trajectory $\pi$ in $D([\tilde r,\tilde s], \mc M)$ with $\tilde r\leq r\leq s\leq \tilde s$, let $\pi^{[r,s]}$ be the trajectory in $D([0,s-r], \mc M)$ given by $\pi^{[r,s]}_t = \pi_{t+r}$, and let 
$$
I_{[r,s]}(\pi) = I_{s-r}\big(\pi^{[r,s]}\big)\,.
$$

For each $\pi$ in $D((-\infty,0],\mc M)$, let $I_{\pi}:(-\infty,0]\to[0,+\infty]$ be the function given by
$$
I_{\pi}(t) = I_{[t,0]}(\pi)\, .
$$
It is clear that this is a nonincreasing function and then
$$
I(\pi) = \lim_{t\downarrow-\infty}I_{\pi}(t) \in [0,+\infty]\, 
$$
is well defined. We claim that, for every path $\pi$ in $D((-\infty,0],\mc M)$,
\begin{eqnarray}\label{est2sld}
I(\pi)\geq V(\pi_0)\, .
\end{eqnarray}
Moreover, if $I(\pi)<\infty$ then, as $t\downarrow-\infty$, $\pi_t$ converges to $\overline\vartheta$ in $\mc M^0$.

Indeed, the last assertion is an immediate consequence of Lemma \ref{cor1}. To prove \eqref{est2sld}, we may assume of course that $I(\pi)<\infty$. In that case, $\pi(t,du) =\rho(t,u)du$ belongs to $\mc C((-\infty, 0], \mc M^0)$ and, by Lemma \ref{lem1}, there exists a sequence of nonpositive times $\{t_n:\, n\geq 1\}$ such that, for each integer $n>0$, $\rho_{t_n}$ belongs to $\bb B_{1/n}(\bar\rho)$. Hence, for all integer $n>0$,
$$
V(\pi_0) \leq \bb V(\rho_{t_n}) + I_{\pi}(t_n) \leq \bb V(\rho_{t_n}) + I(\pi)\, .
$$
To conclude the proof of \eqref{est2sld} it remains to let $n\uparrow\infty$ and to apply Theorem \ref{th2sld}.

As a consequence of these facts we recover the definition for the quasi potential given in \cite{bdgjl2}, in which the infimum appearing in the definition of $V(\vartheta)$ is carried over all paths $\pi$ in $D([-\infty,0],\mc M)$ with $\pi_{-\infty}=\bar\vartheta$ and $\pi_0 = \vartheta$.

\begin{theorem}\label{lscsld}
The functional $V$ is lower semicontinuous.
\end{theorem}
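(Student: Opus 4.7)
The plan is to show that every sub-level set $\{\vartheta \in \mc M : V(\vartheta) \le M\}$ is closed, using the reformulation of $V$ recorded just before the statement: for every $\vartheta \in \mc M$,
\begin{equation*}
V(\vartheta) \;=\; \inf\bigl\{I(\pi) \,:\, \pi \in D((-\infty, 0], \mc M),\ \pi_0 = \vartheta,\ \pi_t \to \overline\vartheta\ \text{as}\ t\to-\infty\bigr\}.
\end{equation*}
Fix $M \ge 0$ and a sequence $\vartheta_n \to \vartheta$ in $\mc M$ with $V(\vartheta_n) \le M$. For each $n$ I pick $\pi^n$ in the class above with $I(\pi^n) \le V(\vartheta_n) + 1/n \le M + 1/n$, and try to extract a limiting path $\pi$ on $(-\infty, 0]$ with $\pi_0 = \vartheta$ and $I(\pi) \le M$, after which the estimate \eqref{est2sld} will do the rest.

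For every integer $T>0$ the time-shifted restriction of $\pi^n$ to $[-T,0]$ satisfies $I_{[-T, 0]}(\pi^n) \le M + 1$. Using the compactness of the level sets of $I_T$ in $D([0,T], \mc M)$---the same fact that was invoked in the proof of Lemma \ref{lem1} above---one extracts, for each fixed $T \in \N$, a subsequence along which $\pi^n|_{[-T, 0]}$ converges in the Skorohod topology. A Cantor diagonal argument over $T \in \N$ then yields a single subsequence $\{\pi^{n_k}\}$ and a limit $\pi \in D((-\infty, 0], \mc M)$ such that $\pi^{n_k}|_{[-T, 0]} \to \pi|_{[-T, 0]}$ for every $T > 0$; since all trajectories with finite rate function are continuous with values in $\mc M^0$ (Lemma \ref{lem01}), this convergence is uniform on each compact time interval.

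The remaining steps are routine. Continuity yields $\pi_0 = \lim_k \pi^{n_k}_0 = \lim_k \vartheta_{n_k} = \vartheta$. The lower semicontinuity of each $I_{[-T, 0]}$, immediate from its definition as a supremum of continuous functionals $J_G$, gives $I_{[-T, 0]}(\pi) \le \liminf_k I_{[-T, 0]}(\pi^{n_k}) \le M$ for every $T > 0$; letting $T \uparrow \infty$ produces $I(\pi) \le M$. Inequality \eqref{est2sld} then yields $V(\vartheta) = V(\pi_0) \le I(\pi) \le M$, which closes the sub-level set and proves the theorem.

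The main obstacle is the compactness of the level sets of $I_T$ in $D([0, T], \mc M)$ without prescribing the initial datum: Theorem \ref{dldp} supplies this only for $I_T(\cdot\,|\,\gamma)$ with fixed $\gamma$. To upgrade, I would observe that $I_T(\pi) < \infty$ forces $\pi$ to be continuous with values in the weakly compact set $\mc M^0$ (Lemma \ref{lem01}), so the initial data $\pi^n_{-T}$ already lie in a compact subset of $\mc M$; standard equicontinuity estimates extracted from the representation \eqref{jflm} and the energy functional $\mc Q_T$ provide a uniform modulus of continuity on bounded level sets, and an Ascoli-type argument then yields the required compactness. This compactness upgrade is implicit in the proof of Lemma \ref{lem1} above and is the only non-routine ingredient of the argument.
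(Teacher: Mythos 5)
Your argument is correct and is essentially the paper's own proof: the paper splits into the cases where the optimal times $T_n$ stay bounded or diverge, and in the second case performs exactly your construction (shift to $[-T_n,0]$, extract limits on each $[-l,0]$ by compactness of the level sets of $I_T$, Cantor diagonal, then conclude via \eqref{est2sld}); your use of the infinite-horizon reformulation of $V$ merely lets you run that one argument uniformly and skip the bounded case, which is a harmless simplification. One small caveat: the lower semicontinuity of $I_T$ is not ``immediate from its definition as a supremum of continuous functionals,'' since $J_G$ involves the nonlinear terms $\varphi(\rho_t)$ and $\sigma(\rho_t)$ and is therefore not continuous for the weak topology on $\mc M$; the correct route is via Lemma \ref{lemsld} (weak $L^2$ convergence upgrades to strong convergence on level sets), which is also how one obtains the compactness of the level sets of $I_T$ without prescribed initial datum that both you and the paper rely on.
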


\begin{proof}
Since $V(\vartheta)$ is finite only for measures $\vartheta$ in $\mc M^0$, which is a closed subset of $\mc M$, we just need to prove that, for all $q \in\bb R_+$, the set
$$
V_q =\{\vartheta\in\mc M^0: V(\vartheta)\leq q\}\, ,
$$
is closed in $\mc M^0$. Fix then $q\in \bb R_+$ and let $\{\vartheta^n(du)=\rho^n(u)du:\,n\geq 1\}$ be a sequence of measures in $V_q$ converging to some $\vartheta(du)=\rho(u)du$ in $\mc M^0$.

By definition of $V$, for each integer $n>0$, there exists a path $\pi^n$ in $C([0,T_n],\mc M^0)$ with $\pi^n_0=\overline\vartheta$, $\pi^n_{T_n} = \vartheta^n$ and such that
\begin{eqnarray}\label{lsc}
I_{T_n}(\pi^n) \leq V(\vartheta^n) + 1/n\leq q +1/n
\end{eqnarray}

Let us assume first that the sequence $\{T_n:\, n\geq 1\}$ is bounded above by some $T>0$. In that case, for each integer $n>0$, let $\hat{\pi}^n$ be the path in $C([0,T],\mc M^0)$ obtained from $\pi^n$ by staying a time $T-T_{n}$ at $\overline\vartheta$,
$$
\hat{\pi}^n_t \;=\;
\begin{cases}

\overline\vartheta & \hbox{if $0\leq t\leq T-T_{n}$}\,,\\

\pi^n_{t-T+T_n} & \hbox{if $T-T_{n}\leq t\leq T$}\,.

\end{cases}
$$

It is clear that $I_T(\hat{\pi}^n|\bar\rho) = I_{T_n}(\pi^n|\bar\rho)$. Moreover, from \eqref{lsc} and since $I_T(\cdot|\bar\rho)$ has compact level sets, there exists a subsequence of $\hat{\pi}^n$ converging to some $\pi$ in $C([0,T],\mc M^0)$ such that $\pi_T(du) =\rho(u)du$ and $I_T(\pi|\bar\rho)\leq q$. In particular, $\rho$ belongs to $V_q$ and we are done.

To complete the proof, let us now assume that $T_n$ has a subsequence which converges to $\infty$. We may suppose, without loss of generality that this subsequence is the sequence $T_n$ itself. For each integer $n>0$, let $\tilde{\pi}^n$ be the path in $C([-T_n,0],\mc M^0)$ given by $\tilde{\pi}^n_t =\pi^n_{t+T_n}$.

Since $I_T$ is lower semicontinuous with compact level sets, for any integer $l>0$ and for any subsequence $\{\tilde{\pi}^{n_r}: \, r\geq 1\}$ of $\tilde{\pi}^n$, there exists a subsequence of $\tilde{\pi}^{n_r}$ converging to some $\check \pi^l$ in $C([-l,0],\mc M^0)$ with $\check \pi^l_0 = \rho$ and $I_{[-k,0]}(\check \pi^l) \leq q$. Then, by a Cantor's diagonal argument, we may obtain a path $\check \pi$ in $C((-\infty,0],\mc M^0)$ with $\check \pi_0 = \rho$ and $I(\check \pi)\leq q$. This together with \eqref{est2sld} conclude the proof of the theorem.
\end{proof}


\section{Large Deviations}


We prove here that the quasi potential is the large deviation functional of the stationary measure.
Recall from Section 2 that $\bar\rho$ denotes the weak solution of \eqref{f01} and $\overline\vartheta$, the measure in $\mc M^0$ with density $\bar\rho$, i.e., $\overline\vartheta(du) = \bar\rho(u)du$.

\subsection{Lower Bound}

The proof of the lower bound is essentially the same as the one in \cite{BG}, Subsectin 3.1, but for the sake of completeness we present here the detailed proof. In fact, it is a simple consequence of the hydrostatics \eqref{hs} and the dynamical large deviation lower bound.

Fix an open subset $\mc O$ of $\mc M$. We have to prove that
\begin{eqnarray*}
\liminf_{N\to\infty}\frac{1}{N^d}\log\mc P_N(\mc O) \geq -\inf_{\vartheta\in \mc O}V(\vartheta) \, .
\end{eqnarray*}

To this end, it is enough to show that for any measure $\vartheta$ in $\mc O\cap \mc M^0$ and any trajectory $\tilde\pi$ in $C([0,T],\mc M^0)$ with $\tilde\pi_T = \vartheta$,
\begin{eqnarray}\label{lb1}
\liminf_{N\to\infty}\frac{1}{N^d}\log\mc P_N(\mc O)\geq -I_T(\tilde\pi|\bar\rho)
\end{eqnarray}
holds. Since $\mu^N_{ss}$ is stationary,
$$
\mc P_N(\mc O) = \bb E_{\mu^{N}_{ss}}\left[\bb P_{\eta_0}\left(\pi^N_T\in\mc O\right)\right]\, .
$$

The hydrostatic result \ref{hs} is equivalent to the existence of a sequence of positive numbers $\{\varepsilon_N:\; N\geq 1\}$ converging to $0$ and such that $\mc P_N(\mc B_{\varepsilon_N}(\overline\vartheta))$ converges to 1. Hence, for $N$ large enough,
$$
\mc P_N(\mc O)\geq \frac{1}{2}\inf_{\eta\in\mc B_N}\left\{\bb P_{\eta}\left[\pi^N_T\in \mc O\right]\right\},
$$
where $\mc B_N = \left(\pi^N\right)^{-1}(\mc B_{\varepsilon_N}(\overline\vartheta))$. For each integer $N>0$, consider a configuration $\eta^N$ in $\mc B_N$ satisfying
$$
\bb P_{\eta^N}\left[\pi^N_T\in \mc O\right] = \inf_{\eta\in\mc B_N}\left\{\bb P_{\eta}\left[\pi^N_T\in \mc O\right]\right\}\, .
$$

Since $\pi^N\left(\eta^N\right)$ converges to $\bar\rho$ in $\mc M$ and since $\mc O_T = {\pi_T}^{-1}(\mc O)$ is an open subset of $D([0,T],\mc M)$, by the dynamical large deviations lower bound,
\begin{eqnarray*}
\liminf_{N\to\infty}\frac{1}{N^d}\log\mc P_N(\mc O) & \geq & \liminf_{N\to\infty}\frac{1}{N^d}\log\bb P_{\eta^N}\left[\pi^N_T\in \mc O\right]
\\
 & = & \liminf_{N\to\infty}\frac{1}{N^d}\log {\text{\bf Q}}_{\eta^N}(\mc O_T)
\\
 & \geq & -\inf_{\pi\in \mc O_T} I_T(\pi|\bar\rho) \geq -I_T(\tilde\pi|\bar\rho)\, ,
\end{eqnarray*}
which proves \eqref{lb1} and we are done.

\subsection{Upper Bound}

In this subsection we prove the upper bound. We follow closely the approach given in \cite{BG} and solve the missing case mentioned in the introduction.

Fix a closed subset $\mc C$ of $\mc M$. We have to show that
\begin{eqnarray}\label{ub}
\limsup_{N\to\infty}\frac{1}{N^d}\log\mc P_N(\mc C) \leq - V(\mc C)\, ,
\end{eqnarray}
where $V(\mc C) = \inf_{\vartheta\in \mc C}V(\vartheta)$.

Notice that if $\overline\vartheta$ belongs to $\mc C$, $V(\mc C) =0$ and the upper bound is trivially verified. Thus, we may assume that $\overline\vartheta\notin\mc C$.

We may also assume that the left hand side of \eqref{ub} is finite, which implies that $\mc C\cap X_N\neq \emptyset$ for infinitely many integers $N$. By the compactness of $\mc M$ and since $\mc C$ is a closed subset of $\mc M$, there exists a sequence of configurations $\{\eta^{N_k}:k\geq 1\}$ with $\pi^N(\eta^{N_k})$ in $\mc C\cap X_{N_k}$ converging to some $\vartheta$ in $\mc C$. Moreover, since each configuration in $X_N$ has at most one particle per site, $\vartheta$ belongs to $\mc M^0$. In particular, by Proposition \ref{quapotfin}, $V(\mc C)<\infty$.

Fix $\delta>0$ such that $\mc B_{3\delta}(\overline\vartheta)\cap\mc C =\emptyset$. Let $B = B_{\delta}$ be the open $\delta$-ball with centre $\overline\vartheta$ in the $d$-metric,
$$
B=\mc B_{\delta}(\overline\vartheta)\, ,
$$
and let $R = R_{\delta}$ be the subset of $\mc M$ defined by
$$
R=\{\vartheta\in\mc M:\, 2\delta \leq d(\vartheta,\overline\vartheta) \leq 3\delta\}\, .
$$

For each integer $N>0$ and each subset $A$ of $\mc M$, let $A^N =(\pi^N)^{-1}(A)$ and let $H_A^N:D(\bb R_+, X_N)\to[0,+\infty]$ be the entry time in $A^N$:
$$
H^N_A = \inf\left\{t\geq 0: \eta_t\in A^N\right\}\,.
$$

Let $\partial B^N = \partial B^N_{\delta}$ be the set of configurations $\eta$ in $X_N$ for which there exists a finite sequence of configurations $\{\eta^i: 0\leq i\leq k\}$ in $X_N$ with $\eta^0$ in $R^N$, $\eta^k = \eta$ and such that

\begin{enumerate}
\item[\it i)] For every $1\leq i\leq k$, the configuration $\eta^{i}$ can be obtained from $\eta^{i-1}$ by a jump of the dynamics.
\item[\it ii)] The unique configuration of the sequence that can enter into $B^N$ after a jump is $\eta^k$.
\end{enumerate}

Let also $\tau_1 = \tau_1^N: D(\bb R_+, X_N)\to [0,\infty]$ be the stopping time given by

$$
\tau_1 = \inf\left\{t> 0: \hbox{ there exists } s < t \hbox{ such that } \eta_s\in R^N \hbox{ and } \eta_t\in\partial B^N\right\}\, .
$$

The sequence of stopping times obtained by iterating $\tau_1$ is denoted by $\tau_k$. This sequence generates a Markov chain $X_k$ on $\partial B^N$ by setting $X_k = \eta_{\tau_k}$.

Notice that this Markov chain is irreducible. In fact, let $\zeta,\eta$ be configurations in $\partial B^N$. By definition of the set $\partial B^N$, there exist a sequence $\{\eta^i: 0\leq i\leq k\}$ in $X_N$ which satisfies $\eta^0\in R^N$, $\eta^k = \eta$, $i)$ and $ii)$. Further, it is clear that there exists a sequence $\{\zeta^i: 0\leq i\leq l\}$ in $X_N$ which satisfies $\zeta^0 = \zeta$, $\zeta^l = \eta^0$ and $i)$. Consider then the sequence $\{\tilde\eta^j:0\leq j\leq l+k\}$ in $X_N$ given by
$$
\tilde\eta^j \;=\;
\begin{cases}

\zeta^j & \hbox{ if }\; 0\leq j\leq l\, ,\\

\eta^{j-l} & \hbox{ if }\; l < j\leq l+k\, .

\end{cases}
$$

Let $j_0 = 0$ and for $i\geq 1$ let
$$
j_{2i-1} = \min_{j> j_{2i-2}\atop \eta^j\in R^N}\{j\}\qquad\hbox{ and }\qquad
j_{2i} = \min_{j>j_{2i-1}\atop \eta^j\in \partial B^N}\{j\}\, .
$$

Thus, by setting $\xi^i = \tilde\eta^{j_{2i}}$, we obtain a sequence $\{\xi^i:0\leq i\leq r\}$ in $\partial B^N$ starting at $\xi^0 = \zeta$, ending at $\xi^r = \eta$ and such that
$$
\bb P_{\xi_{i-1}}[\eta_{\tau_1} = \xi_{i}] > 0\, ,
$$
for every $1\leq i\leq r$. This implies the irreducibility of $X_k$.

Hence, since the state space $\partial B^N$ is finite, this Markov chain has a unique stationary measure $\nu_N$. Following \cite{BG, FW}, we represent the stationary measure $\mu^N_{ss}$ of a subset $A$ of $X_N$ as
$$
\mu^N_{ss}(A) = \frac{1}{C_N}\int_{\partial B^N}\bb E_{\eta}\left(\int_0^{\tau_1} {\bf 1}_{\left\{\eta_s\in A\right\}}ds\right)d\nu_N(\eta)\, ,
$$
where
$$
C_N=\int_{\partial B^N}\bb E_{\eta}\left(\tau_1\right)d\nu_N(\eta)\, .
$$

In particular, by this representation an by the strong Markov property,
\begin{eqnarray*}
\mc P_N(\mc C)  \leq  \frac{1}{C_N}\sup_{\eta\in\partial B^N}\left\{\bb P_{\eta}\left[H_{\mc C}^N < \tau_1\right]\right\}  \sup_{\eta\in \mc C^N} \left\{\bb E_{\eta}\left(\tau_1\right)\right\}\, .
\end{eqnarray*}

Recall that a configuration in $X_N$ can jump by the dynamics to less than other $2dN^d$ configurations and that the jump rates are of order $N^2$. Hence, since any trajectory in $D(\bb R_+,X_N)$ has to perform at least a jump before the stopping time $\tau_1$, $C_N\geq 1/CN^{d+2}$ for some constant $C>0$.

Notice that the jumps of the process $d(\pi^N(\eta_t),\bar\rho)$ are of order $N^{-d}$. Thus, for $N$ large enough, any trajectory in $D(\bb R_+, X_N)$ starting at some configuration in $\partial B^N$, resp. $\mc C^N$, satisfies $H^N_R \leq H^N_{\mc C}$, resp. $\tau_1\leq H^N_B$. Hence, by the strong Markov property,

\begin{eqnarray*}
\mc P_N(\mc C) \leq  CN^{d+2} \sup_{\eta\in R^N} \left\{\bb P_{\eta}\left[H_{\mc C}^N < H_B^N\right]\right\}  \sup_{\eta\in \mc C^N} \left\{\bb E_{\eta}\left(H_B^N\right)\right\}\, .
\end{eqnarray*}
Therefore, in order to prove \eqref{ub}, it is enough to show the next lemma.

\begin{lemma}\label{lem3}
For every $\delta >0$,
\begin{eqnarray}\label{est6}
\limsup_{N\to\infty}\frac{1}{N^d}\log\sup_{\eta\in X_N}\left\{\bb E_{\eta}\left(H^N_{B_\delta}\right)\right\} \leq 0\, .
\end{eqnarray}

For every $\varepsilon>0$, there exists $\delta>0$ such that
\begin{eqnarray}\label{est7sld}
\limsup_{N\to\infty}\frac{1}{N^d}\log\sup_{\eta\in R^N_{\delta}}\left\{\bb P_{\eta}\left[H_{\mc C}^N < H_{B_{\delta}}^N\right]\right\} \leq -V(\mc C) +\varepsilon\, .
\end{eqnarray}
\end{lemma}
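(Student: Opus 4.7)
The estimate \eqref{est6} follows by combining the long-time stability of the hydrodynamic equation with the hydrodynamic limit. By Corollary \ref{lem02sld} together with Lemma \ref{s02}, there exists $T_0=T_0(\delta)$ such that $\lambda_{T_0}(u)\,du\in B_{\delta/2}(\overline\vartheta)$ for every weak solution $\lambda$ of \eqref{f02}. A compactness-and-contradiction argument combined with the hydrodynamic limit yields $\inf_{\eta\in X_N}\bb P_\eta[\pi^N_{T_0}\in B_\delta]\to 1$: if this failed along some sequence $\eta^N$, by compactness of $\mc M$ we extract $\pi^N(\eta^N)\to \gamma$, and the corresponding weak solution sits in $B_{\delta/2}$ at time $T_0$, contradicting the failure. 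Iterating via the strong Markov property gives $\sup_\eta\bb E_\eta[H^N_{B_\delta}]\leq 2T_0$ for all sufficiently large $N$, which is far stronger than \eqref{est6}.

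For \eqref{est7sld} I follow the Freidlin--Wentzell scheme with the adjustments flagged in the introduction. Given $\varepsilon>0$, use Theorem \ref{th2sld} to choose $\delta_2>0$ with $V(\rho\,du)<\varepsilon/2$ whenever $\|\rho-\bar\rho\|_2<\delta_2$; apply Lemma \ref{lem1} with parameter $\delta_2$ to obtain constants $T_1,\iota_1>0$; set $T_0:=(V(\mc C)+1+\iota_1)T_1/\iota_1$ and pick $T_*\geq T_0$ large enough that $\lfloor T_*/T_1\rfloor\iota_1>V(\mc C)+1$; finally select $\delta>0$ small so that $\mc B_{3\delta}(\overline\vartheta)\cap\mc C=\emptyset$ and $\bb B_{\delta_2}(\bar\rho)\subset B_\delta$ (possible since $L^2$-convergence implies weak convergence in $\mc M$). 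Decompose
\[
\bb P_\eta\bigl[H^N_{\mc C}<H^N_{B_\delta}\bigr]\leq \bb P_\eta\bigl[\pi^N\in\mc D_{T_*}\bigr]+\bb P_\eta\bigl[H^N_{B_\delta}>T_*\bigr],
\]
where $\mc D_{T_*}\subset D([0,T_*],\mc M)$ is the set of paths hitting $\mc C$ before time $T_*$ while remaining in $\mc M\setminus B_\delta$ until the hitting time. Since avoidance of $B_\delta$ forces avoidance of $\bb B_{\delta_2}$, concatenation of Lemma \ref{lem1} on $T_1$-blocks and the dynamical LDP of Theorem \ref{dldp} yield $\bb P_\eta[H^N_{B_\delta}>T_*]\leq \exp(-N^d(V(\mc C)+1)+o(N^d))$.

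The dynamical LDP reduces the first term to the rate-function estimate
\[
\inf\bigl\{I_{T_*}(\pi\,|\,\pi_0):\pi\in\mc D_{T_*},\,\pi_0\in\overline{R_\delta}\bigr\}\geq V(\mc C)-\varepsilon.
\]
Let $T'\leq T_*$ be the hitting time of $\mc C$. Three subcases cover every such $\pi$. \textit{Case A:} $\pi|_{[0,T']}$ enters $\bb B_{\delta_2}$ at some $t^*\leq T'$; splitting at $t^*$ and prepending a near-optimal path from $\bar\rho$ to $\rho_{t^*}$ (of cost $<V(\rho_{t^*})+\varepsilon/2<\varepsilon$ by Theorem \ref{th2sld}) produces a path from $\bar\rho$ to $\mc C$ of cost $<\varepsilon+I_{T_*}(\pi\,|\,\pi_0)$, and the definition of $V(\mc C)$ gives $I_{T_*}(\pi\,|\,\pi_0)\geq V(\mc C)-\varepsilon$. \textit{Case B1:} $\pi|_{[0,T']}$ avoids $\bb B_{\delta_2}$ and $T'\geq T_0$; Lemma \ref{lem1} concatenated on $\lfloor T'/T_1\rfloor$ consecutive intervals yields $I_{T_*}(\pi\,|\,\pi_0)\geq V(\mc C)+1$. \textit{Case B2:} $\pi|_{[0,T']}$ avoids $\bb B_{\delta_2}$ and $T'<T_0$; this is the main obstacle. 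I treat it by compactness: were the bound to fail through such paths for every small $\delta$, extending each offending $\pi^\delta$ to length $T_0$ by appending the free hydrodynamic evolution (zero additional cost) produces a sequence of paths of length $T_0$ with $\pi^\delta_0\to\overline\vartheta$ weakly, each hitting $\mc C$ at some time in $[0,T_0]$, and $I_{T_0}(\pi^\delta)<V(\mc C)-\varepsilon$; compactness of the level sets of $I_{T_0}$ (obtained from the $L^2([0,T_0],H^1(\Omega))$-bound implicit in $\mc Q_{T_0}<\infty$) and its lower semicontinuity yield a limit path starting at $\bar\rho$ and hitting $\mc C$ with cost $<V(\mc C)-\varepsilon$, contradicting the definition of $V(\mc C)$. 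The delicacy in Case B2, and the precise gap in \cite{BG} that this paper fills, is that $\pi_0\in R_\delta$ is only weakly close to $\overline\vartheta$ and may be far from $\bar\rho$ in $L^2$: Theorem \ref{th2sld} cannot be applied to $\pi_0$ directly (no $L^2$-proximity), and Lemma \ref{lem1} cannot accumulate enough cost on the short interval $[0,T']$, so the compactness route through the good dynamical LDP is indispensable.
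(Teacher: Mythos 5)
Part \eqref{est6} of your argument is fine and takes a different route from the paper: you extract a uniform bound $\sup_\eta\bb E_\eta[H^N_{B_\delta}]\leq 2T_0$ from the hydrodynamic limit via compactness-and-contradiction plus Markov iteration, whereas the paper derives the exponential tail bound $\sup_\eta\bb P_\eta[H^N_{B_\delta}\geq kT_0]\leq e^{-kC_0N^d}$ from Lemma \ref{cor1} and the dynamical LDP upper bound (its Lemma \ref{lem4}), which it then reuses for the second part. Both are valid for \eqref{est6}.

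For \eqref{est7sld} your ingredients (the decomposition into ``escape takes longer than $T_*$'' plus ``hit $\mc C$ before $T_*$'', the dynamical LDP upper bound, Lemma \ref{lem1}, Theorem \ref{th2sld}, compactness of level sets) are exactly the paper's, but your setup contains a genuine quantifier-ordering error. The containment $\bb B_{\delta_2}(\bar\rho)\subset\mc B_\delta(\overline\vartheta)$, which you need to convert avoidance of $B_\delta$ into avoidance of $\bb B_{\delta_2}$ for the second term, goes the wrong way: since $L^2$-proximity controls weak proximity, the containment holds only when $\delta$ is \emph{large} relative to $\delta_2$ (e.g.\ $\delta\geq\sqrt{2}\,\delta_2$), and for fixed $\delta_2$ it fails once $\delta$ is small (take $\bar\rho+c$ for a small constant $c$: it lies in $\bb B_{\delta_2}(\bar\rho)$ but outside $\mc B_\delta(\overline\vartheta)$). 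So you cannot ``select $\delta$ small'' and keep the containment. This matters because your Case B2 produces a contradiction only if the offending initial data $\pi^\delta_0\in\overline{R_\delta}$ converge weakly to $\overline\vartheta$, i.e.\ only along a sequence $\delta\downarrow0$; for a fixed positive $\delta$ the limit path starts somewhere at weak distance at least $2\delta$ from $\overline\vartheta$, not at $\bar\rho$, and no contradiction with the definition of $V(\mc C)$ follows. The requirement $\delta\geq\sqrt2\delta_2$ and the requirement $\delta\downarrow0$ are incompatible. The repair is essentially what the paper does: bound the second term with the weak-topology escape estimate (Lemma \ref{cor1}, i.e.\ the paper's Lemma \ref{lem4}), allowing $T_*=T_*(\delta)$ to depend on $\delta$; this is harmless because your case split compares the hitting time $T'$ with the fixed threshold $T_0=T_0(\delta_2)$, not with $T_*$, and Cases A and B1 use only the $L^2$-ball $\bb B_{\delta_2}$ and never the containment. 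With that change your Cases A/B1/B2 become a correct reorganization of the paper's dichotomy ($\widetilde T_n$ bounded versus $\widetilde T_n\to\infty$).
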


To prove this lemma, we will need the following technical result.

\begin{lemma}\label{lem4}
For every $\delta>0$, there exists $T_0,C_0,N_0 >0$ such that
$$
\sup_{\eta\in X_N} \left\{\bb P_{\eta}\left[H_{B_{\delta}}^N \geq kT_0\right]\right\} \leq \exp\left\{-kC_0N^d\right\}\, ,
$$
for any integers $N>N_0$ and $k>0$.
\end{lemma}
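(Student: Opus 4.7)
The plan is to reduce the general statement to the case $k=1$ via the strong Markov property, and then to establish the $k=1$ bound by combining the dynamical large deviation upper bound of Theorem \ref{dldp} with Lemma \ref{cor1}. First, apply Lemma \ref{cor1} with $\varepsilon = \delta$ to obtain a time $T_0 > 0$ and a constant $c > 0$ such that $I_{T_0}(\pi) \geq c$ for every trajectory $\pi$ in $D([0, T_0], \mc M)$ with $\pi_{T_0} \notin \mc B_{\delta}(\overline\vartheta)$, and set $C_0 = c/2$.

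The main step is to show that, with this $T_0$,
\begin{equation*}
\sup_{\eta \in X_N} \bb P_\eta\!\left[H^N_{B_\delta} \geq T_0\right] \leq e^{-C_0 N^d}
\end{equation*}
for all sufficiently large $N$. On the event $\{H^N_{B_\delta} \geq T_0\}$ one has $\pi^N_s \notin B_\delta$ for every $s < T_0$, so the left limit satisfies $\pi^N_{T_0-} \in \mc B_\delta^c$; since the process almost surely makes no jump at the deterministic time $T_0$, we conclude $\pi^N_{T_0} \in \mc B_\delta^c$ almost surely on this event. It therefore suffices to bound $\bb P_\eta[\pi^N_{T_0} \in \mc B_\delta^c]$ uniformly in $\eta$. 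I would argue by contradiction: if the uniform bound failed, there would be a subsequence $N_k \to \infty$ and configurations $\eta^{N_k} \in X_{N_k}$ with $\bb P_{\eta^{N_k}}[\pi^{N_k}_{T_0} \in \mc B_\delta^c] > e^{-C_0 N_k^d}$. By compactness of $\mc M$ together with the exclusion constraint (at most one particle per site), pass to a further subsequence so that $\pi^{N_k}(\eta^{N_k})$ converges in $\mc M$ to some measure with density $\tilde\gamma$ with $0\le \tilde\gamma\le 1$. Since the endpoint projection $\pi \mapsto \pi_{T_0}$ is continuous in the Skorohod topology (admissible time-changes fix endpoints), the set $\{\pi : \pi_{T_0} \in \mc B_\delta^c\}$ is closed in $D([0, T_0], \mc M)$. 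Applying Theorem \ref{dldp} with initial profile $\tilde\gamma$ and using $I_{T_0}(\cdot\,|\,\tilde\gamma) \geq I_{T_0}(\cdot)$ yields
\begin{equation*}
\limsup_{k\to\infty} \frac{1}{N_k^d}\log \bb P_{\eta^{N_k}}[\pi^{N_k}_{T_0} \in \mc B_\delta^c] \leq -c = -2C_0,
\end{equation*}
contradicting the assumed lower bound of $-C_0$.

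The final step is iteration via the strong Markov property applied at the deterministic times $jT_0$ for $j = 1, \ldots, k-1$, which gives
\begin{equation*}
\bb P_\eta[H^N_{B_\delta} \geq kT_0] \leq \Bigl(\sup_{\zeta \in X_N} \bb P_\zeta[H^N_{B_\delta} \geq T_0]\Bigr)^k \leq e^{-kC_0 N^d}.
\end{equation*}
The main obstacle is the uniform-in-$\eta$ control at the $k=1$ stage; this is precisely what forces the contradiction-and-compactness argument above, and it relies crucially on two facts: (i) the exclusion rule guarantees that every subsequential limit $\tilde\gamma$ of $\pi^{N_k}(\eta^{N_k})$ lies in $\mc M^0$, so Theorem \ref{dldp} is applicable; and (ii) the lower bound $c$ from Lemma \ref{cor1} is uniform in the initial profile, so that even passing to the conditional rate function $I_{T_0}(\cdot\,|\,\tilde\gamma)$ we retain the same bound.
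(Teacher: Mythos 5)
Your proposal is correct and follows essentially the same route as the paper: reduce to the $k=1$ case, obtain the uniform-in-$\eta$ bound from Lemma \ref{cor1} combined with the dynamical large deviation upper bound of Theorem \ref{dldp} via a compactness/subsequence argument on the initial configurations (the paper extracts a maximizing configuration rather than arguing by contradiction, and works with the closed set $D([0,T_0],\mc M\setminus B)$ rather than the endpoint set, but these are cosmetic differences), and then iterate with the Markov property at times $jT_0$. The small technical points you flag (endpoint continuity in the Skorohod topology, no jump at a deterministic time, $I_{T_0}(\cdot|\tilde\gamma)\geq I_{T_0}(\cdot)$) are all handled correctly.
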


\begin{proof}
Fix $\delta>0$. By Lemma \ref{cor1}, there exists $T_0>0$ and $C_0>0$ such that
$$
\inf_{\pi\in\mc D} I_{T_0}(\pi) > C_0\, ,
$$
where $\mc D = D([0,T_0],\mc M\backslash B)$. For each integer $N>0$, consider a configuration $\eta^N$ in $X_N$ such that
$$
\bb P_{\eta^N}\left[H_B^N \geq T_0\right] = \sup_{\eta\in X_N} \left\{\bb P_{\eta}\left[H_B^N \geq T_0\right]\right\}\, .
$$

By the compactness of $\mc M$, every subsequence of $\pi^N(\eta^N)$ contains a subsequence converging to some $\vartheta$ in $\mc M$. Moreover, since each configuration in $X_N$ has at most one particle per site, $\vartheta$ belongs to $\mc M^0$. From this and since $\mc D$ is a closed subset of $D([0,T_0],\mc M)$, by the dynamical large deviations lower bound, there exists a measure $\vartheta(du) = \gamma(u)du$ in $\mc M^0$ such that
\begin{eqnarray*}
\limsup_{N\to\infty}\frac{1}{N^d}\log \bb P_{\eta^N}\left[H_B^N\geq  T_0\right]  & = &
\limsup_{N\to\infty}\frac{1}{N^d}\log {\text{\bf Q}}_{\eta^N}(\mc D)
\\
& \leq & -\inf_{\pi\in\mc D} I_{T_0}(\pi|\gamma)
\\
 & < & -C_0\, .
\end{eqnarray*}
In particular, there exists $N_0>0$ such that for every integer $N>N_0$,
\begin{eqnarray*}
\bb P_{\eta^N}\left[H^N_B\geq T_0\right] \leq \exp\{-C_0N^d\}\,.
\end{eqnarray*}
To complete the proof, we proceed by induction. Suppose then that the statement of the lemma is true until an integer $k-1>0$. Let $N>N_0$ and let $\hat{\eta}$ be a configuration in $X_N$. By the strong Markov property,
\begin{eqnarray*}
\bb P_{\hat{\eta}}\left[H^N_B\geq kT_0\right] & = & \bb E_{\hat{\eta}}\left[{\bf 1}_{\left\{H^N_B\geq T_0\right\}}\bb P_{\eta_{ _{T_0}}}\left[H^N_B\geq(k-1)T_0\right]\right]
\\
 & \leq & \bb P_{\hat{\eta}}\left[H_B^N\geq T_0\right]\sup_{\eta\in X_N}\left\{\bb P_{\eta}\left[H^N_B\geq(k-1)T_0\right]\right\}
\\
& \leq & \exp\left\{-kC_0N^d\right\}\, ,
\end{eqnarray*}
which concludes the proof.

\end{proof}

\begin{proof}[\bf Proof of Lemma \ref{lem3}]
Let $\delta>0$ and consider $T_0,C_0,N_0>0$ satisfying the statement of Lemma \ref{lem4}. For every integer $N>N_0$ and every configuration $\eta$ in $X_N$,

\begin{eqnarray*}
\bb E_{\eta}\left(H_B^N\right)\leq T_0\sum_{k=0}^{\infty}\bb P_{\eta}\left(H_B^N\geq kT_0\right) \leq T_0\sum_{k=0}^{\infty}\exp\left\{-kC_0N^d\right\} \leq \frac{T_0}{1-e^{-C_0}}\, ,
\end{eqnarray*}
which proves \eqref{est6}.

We turn now to the proof of \eqref{est7sld}. Fix $\varepsilon>0$. By Lemma \ref{lem4} and since $V(\mc C)<\infty$, for every $\delta>0$, there exists $T_{\delta}>0$ such that
\begin{eqnarray*}
\limsup_{N\to\infty}\frac{1}{N^d}\log\sup_{\eta\in X_N}\left\{\bb P_{\eta}\left[T_{\delta} \leq H_{B_{\delta}}^N\right]\right\} \leq -V(\mc C)\,.
\end{eqnarray*}

For each integer $N>0$, consider a configuration $\eta^N$ in $R^N_{\delta}$ such that
$$
\bb P_{\eta^N}\left[H^N_{\mc C}\leq T_{\delta}\right] = \sup_{\eta\in R^N_{\delta}}\left\{\bb P_{\eta}\left[H^N_{\mc C}\leq T_{\delta}\right]\right\}\, .
$$

Let $\mc C_{\delta}$ be the subset of $D([0, T_{\delta}],\mc M)$ consisting of all those paths $\pi$ for which there exists $t$ in $[0,T_{\delta}]$ such that $\pi(t)$ or $\pi(t-)$ belongs to $\mc C$. Notice that $\mc C_{\delta}$ is the closure of $\pi^N(\{H^N_{\mc C}\leq T_{\delta}\})$ in $D([0,T_{\delta}],\mc M)$.

Recall that every subsequence of $\pi^N(\eta^N)$ contains a subsequence converging in $\mc M$ to some $\vartheta$ that belongs to $\mc M^0$. Hence, by the dynamical large deviations upper bound, there exists a measure $\vartheta_{\delta}(du)=\gamma_{\delta}(u)du$ in $R_{\delta}\cap\mc M^0$ such that
\begin{eqnarray*}
\limsup_{N\to\infty}\frac{1}{N^d}\log\bb P_{\eta^N}\big(H^N_{\mc C}\leq T_{\delta}\big)\leq \limsup_{N\to\infty}\frac{1}{N^d}\log{\text{\bf Q}}_{\eta^N}(\mc C_{\delta})\leq -\inf_{\pi\in\mc C_{\delta}}I_{T_{\delta}}(\pi|\gamma_{\delta})\, .
\end{eqnarray*}
Therefore, since
$$
\limsup_{N\to\infty}\frac{1}{N^d}\log\{a_N+b_N\}\leq \max\left\{\limsup_{N\to\infty}\frac{1}{N^d}\log a_N, \limsup_{N\to\infty}\frac{1}{N^d}\log b_N\right\}\, ,
$$
the left hand side in \eqref{est7sld} is bounded above by
$$
\max\left\{-V(\mc C),-\inf_{\pi\in\mc C_{\delta}}I_{T_{\delta}}(\pi|\gamma_{\delta})\right\}
$$
for every $\delta>0$. Thus, in order to conclude the proof, it is enough to check that there exists $\delta>0$ such that
$$
\inf_{\pi\in\mc C_{\delta}}I_{T_{\delta}}(\pi|\gamma_{\delta})\geq V(\mc C) -\varepsilon\,.
$$

Assume that this is not true. In that case, for every integer $n>0$ large enough, there exists a path $\pi^n$ in $\mc C_{1/n}\cap C([0,T_{1/n}],\mc M^0)$ such that
$$
I_{T_{1/n}}(\pi^n|\gamma_{1/n})< V(\mc C)-\varepsilon\,.
$$
Moreover, since $\pi^n$ belongs to $\mc C_{1/n}\cap C([0,T_{1/n}],\mc M^0)$, there exists $0<\widetilde T_n\leq T_{1/n}$ such that $\pi^n_{\widetilde T_n}$ belongs to $\mc C$.

Let us assume first that the sequence of times $\{\widetilde T_n: \, n\geq 1\}$ is bounded above by some $T>0$. For each integer $n>0$, let $\tilde\pi^n$ be the path in $C([0,T],\mc M^0)$ given by
$$
\tilde\pi^n_t \;=\;
\begin{cases}

\pi^n_t & \hbox{ if }\; 0\leq t\leq \widetilde T_n\,,\\

\pi^n_{\widetilde T_n} & \hbox{ if }\; \widetilde T_n\leq t\leq T\, .

\end{cases}
$$
Since $I_T$ has compact level sets and since $\pi^n_0(du) = \gamma_{1/n}(u)du$ belongs to $R_{1/n}\cap \mc M^0$ for every integer $n> 0$, we may obtain a subsequence of $\tilde\pi^n$ converging to some $\pi$ in $C([0,T],\mc M^0)$ such that $\pi_0 = \overline\vartheta$, $\pi_T \in\mc C$ and $I_T(\pi)\leq V(\mc C)-\varepsilon$, which contradicts the definition of $V(\mc C)$ and we are done.

To complete the proof, let us assume now that there exists a subsequence $\big\{\widetilde T_{n_k}:\, k\geq 1\big\}$ of $\widetilde T_n$ converging to $\infty$. By Theorem \ref{th2sld}, there exists $\delta>0$ such that $\bb V(\rho)<\varepsilon$ for every $\rho$ in $\bb B_{\delta}(\bar\rho)$. Moreover, if $\pi^{n_k}_{t}(du) = \rho^{n_k}(t,u)du$, by Lemma \ref{lem1}, for any integer $k$ large enough, there exists $0\leq t_k\leq \tilde T_{n_k}$ such that $\rho^{n_k}_{t_k}$ belongs to $\bb B_{\delta}(\bar\rho)$. Then,
\begin{eqnarray*}
V(\pi^{n_k}(\widetilde T_{n_k})) & \leq & \bb V(\rho^{n_k}_{t_k})+ I_{[t_k,\widetilde T_{n_k}]}(\pi^{n_k})
\\
 & < & \varepsilon +V(\mc C) - \varepsilon = V(\mc C)\, ,
\end{eqnarray*}
which also contradicts the definition of $V(\mc C)$ and we are done.
\end{proof}

\section*{Acknowledgements}
I would like to thank my PhD advisor, Claudio Landim, for suggesting this problem, for valuable discussions and support. I also thank Thierry Bodineau for stimulating discussions on this topic.

\end{document}